\documentclass{article}
\usepackage{amssymb}

\usepackage{makeidx}
\usepackage{graphicx}
\usepackage{amsmath}
\usepackage[english]{babel}


\newtheorem{theorem}{Theorem}

\newtheorem{corollary}[theorem]{Corollary}

\newtheorem{lemma}[theorem]{Lemma}

\newenvironment{proof}[1][Proof]{\textbf{#1.} }{\ \rule{0.5em}{0.5em}}

\begin{document}

\title{Partial Probability and Kleene Logic}
\author{Maurizio Negri, \\ Universit\`{a} di Torino}
\maketitle
\begin{abstract}
There are two main approaches to probability, one of set-theoretic character 
where probability is the measure of a set, and another one of linguistic character 
where probability is the degree of confidence in a proposition.
In this work we give an unified algebraic treatment of these
approaches through the concept of valued lattice, obtaining as a by-product a 
translation between them. Then we introduce the concept of
partial valuation for DMF-algebras (De Morgan algebras with a single fixed point for negation),
giving an algebraic setting for probability of partial events.
We introduce the concept of partial probability for propositions,
substituting classical logic with Kleene's logic. In this case too we give a 
translation between set-theoretic and linguistic probability. 
Finally, we introduce the concept of conditional partial probability and prove a weak
form of Bayes's Theorem.

\vspace{5mm} \noindent \textbf{Keywords:} non-classical probability; 
Kleene's Logic; De Morgan Algebras; Valued Lattices; Bayes's Theorem.
\end{abstract}

\section{Probability and logic}

People are generally introduced to probability through the concept of a
probability space, a triple \ $(A,\mathcal{C}_{A},p)$ where $A$ is a sample
space, $\mathcal{C}_{A}$ a field of sets over $A$ and $p:\mathcal{C}%
_{A}\rightarrow \lbrack 0,1]$ a function satisfying Kolmogoroff's axioms.
This is the set-theoretic approach, where events are classical sets
belonging to a field of sets and probability is the measure of a set. \
There is, however, another approach of linguistic character, where the
bearers of probability are sentences and the probability, or degree of
confidence, is measured by a probability function. We say that a function $%
\pi $ from the set $F$ of formulas of a sentential language to $[0,1]$ is a
probability function, if the following axioms are satisfied:

\begin{enumerate}
\item  if $\models \alpha $ then $\pi (\alpha )=1$,

\item  if $\models \lnot (\alpha \wedge \beta )$ then $\pi (\alpha \vee
\beta )=\pi (\alpha )+\pi (\beta )$,
\end{enumerate}

\noindent where $\models $ is the consequence relation of bivalent logic.
(For the linguistic approach to classical probability, see, for instance, 
\cite[ch. 2, par. c.1]{howson1989} 
or par. \ref{parprobsent} below). Both
point of view on classical probability are deeply rooted in Boolean
algebras, on one side for the notion of set of classical set theory, on the
other side for the notion of logical consequence of bivalent logic.

If the classic approach to probability is modified by the introduction of
partial events, as in \cite{negri2010}, the algebra of events becomes a
DMF-algebra, a De Morgan algebra with a single fixed point for negation. We
define partial probability spaces in par. \ref{parprobpar} by a
set of axioms that the partial probability measure must obey. The linguistic
approach to partial probability is introduced in par. \ref{parprobparsent},
where a partial probability function is given by means of axioms in which
the consequence relation $\models$ is borrowed from Kleene logic.

The aim of this work is to give an unified algebraic treatment of these
subjects through the concept of valued lattice (see \cite[ch. 10]{birkhoff1967}). 
In par. 2 we give the algebraic counterpart of classical
probability and in par. 5-8 we introduce the algebraic tools for the 
development of partial
probability. In this way, the passage from classical to partial probability
can be seen as the shift from Boolean algebras to DMF-algebras. As a result,
we obtain a translation from the set-theoretic treatment of probability in
term of events to the linguistic one in terms of sentences and vice versa, \
both in the case governed by Boolean algebras (bivalent logic) and in the
case governed by DMF-algebras (Kleene logic). This is the subject of par. 4,
11 and 12. In par. 13 we give a weak form of Bayes's Theorem and a result
about conditional partial probability.

In the following we shall be only concerned with a finitary notion of
probability, so we confine ourselves to probability spaces with a finite
sample space and to sentential languages with a finite number of sentential
variables.

\section{Valuations\label{parboolval}}

A general setting for probability spaces can be given
through the notion of valuation and valued lattice. If $\mathcal{A}$ is a
lattice we say that $v:A\rightarrow R$ is a \textit{valuation} on $\mathcal{A%
}$ if 
\begin{equation}
v(a\vee b)=v(a)+v(b)-v(a\wedge b).  \tag{$\ast$}  \label{1}
\end{equation}
If $x\leq y$ implies $v(x)\leq v(y)$, we say that $v$ is \textit{isotone}; $%
v $ is \textit{strictly isotone} if we can substitute $\leq $ with $<$.
(Birkhoff calls \textit{positive} a strictly isotone valuation.) In the
following we will confine ourselves to non-negative valuations, i.e.
valuations such that $0\leq v(a)$, for all $a\in A$. A \textit{valued lattice%
} is a pair $(\mathcal{A},v )$ where $\mathcal{A}$ is a lattice and $v $
a valuation on $\mathcal{A}$.

Let $L_{0,1}$ be the class of bounded lattices. If $\mathcal{A}\in L_{0,1}$,
we say that $v $ is a \textit{bounded lattices valuation} if $v $ is a
valuation on $\mathcal{A}$ and $v (0)=0$ and $v (1)=1$. If $v $ is an
isotone valuation on a bounded lattice $\mathcal{A}$, then $v \lbrack
A]\subseteq \lbrack 0,1]$. A \textit{valued bounded lattice} is a pair $(%
\mathcal{A},v )$ where $\mathcal{A}$ is a bounded lattice and $v $ a
bounded lattices valuation on $\mathcal{A}$.

Let $BA$ be the class of Boolean algebras. If $\mathcal{A}\in BA$, we say
that $v $ is a \textit{Boolean valuation} if $v $ is a bounded lattices
valuation on $\mathcal{A}$. A \textit{valued Boolean algebra} is a pair $(%
\mathcal{A},v )$, where $\mathcal{A}$ is a Boolean algebra and $v $ a
Boolean valuation on $\mathcal{A}$. We can give an equivalent definition of
a Boolean valuation as follows. In a lattice with $0$, a function $%
f:A\rightarrow \lbrack 0,1]$ is said to be \textit{additive} iff $f(a\vee
b)=f(a)+f(b)$, whenever $a\wedge b=0$.

\begin{theorem}
\label{teoadd}If $\mathcal{A}\in BA$ and $v :A\rightarrow \lbrack 0,1]$,
then $v $ is a Boolean valuation iff $v (1)=1$ and $v $ is additive.
\end{theorem}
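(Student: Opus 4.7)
The plan is a standard two-direction proof, exploiting the presence of complements in a Boolean algebra to reduce the valuation identity to a pair of disjoint decompositions.

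For the forward direction, assume $v$ is a Boolean valuation. Then $v(1)=1$ holds by definition, and if $a\wedge b=0$ the valuation equation ($\ast$) collapses to
\[
v(a\vee b)=v(a)+v(b)-v(0)=v(a)+v(b),
\]
since $v(0)=0$ is also part of the definition of a bounded lattices valuation. So additivity is immediate.

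For the converse, assume $v(1)=1$ and $v$ is additive. First I would obtain $v(0)=0$ for free by applying additivity to $a=b=0$: since $0\wedge 0=0$, we get $v(0)=v(0\vee 0)=v(0)+v(0)$, and since $v(0)\in[0,1]$ is finite, $v(0)=0$. The main work is then to derive the general valuation identity ($\ast$) from additivity alone. Here I would use the Boolean complement $\lnot a$ to split $a\vee b$ and $b$ into disjoint pieces:
\[
a\vee b = a\vee(b\wedge\lnot a),\qquad b=(b\wedge a)\vee(b\wedge\lnot a),
\]
where in each decomposition the two joined elements have meet $0$. Applying additivity to both yields $v(a\vee b)=v(a)+v(b\wedge\lnot a)$ and $v(b)=v(a\wedge b)+v(b\wedge\lnot a)$. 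Eliminating $v(b\wedge\lnot a)$ between these two equations gives exactly ($\ast$).

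The only step that really uses the Boolean (as opposed to merely distributive or bounded lattice) structure is the disjoint decomposition via complements; without it, additivity is strictly weaker than being a valuation. I don't expect any genuine obstacle here — the main point of the theorem is to record that, in the Boolean setting, the simpler additivity axiom is equivalent to the full inclusion–exclusion identity that will be used when passing to the probabilistic and linguistic reformulations later in the paper.
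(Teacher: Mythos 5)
Your proof is correct and follows essentially the same route as the paper's: both directions are handled identically, and your decomposition via $b\wedge\lnot a$ is exactly the paper's $b-a$. The only cosmetic difference is how $v(0)=0$ is extracted from additivity (you use $0\vee 0$, the paper uses $a\vee 0$), which changes nothing.
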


\begin{proof}
If $v$ is a Boolean valuation then $v(1)=1$. As for additivity, we assume $%
a\wedge b=0$ then $v (a\vee b)=v (a)+v (b)$ follows from (\ref{1}) and
from $v(0)=0$. In the other direction, we assume that $v$ is additive and $%
v(1)=1$. We show that $v $ is a bounded latticed valuation. As $a\vee 0=a$
and $a\wedge 0=0$, by additvity we have $v(a)=v(a)+v(0)$ and so $v(0)=0$.
Finally we show that (\ref{1}) holds. In every Boolean algebra, $a\vee b=a\vee
(b-a)$ and $a\wedge (b-a)=0$, so by additivity we have $v(a\vee
b)=v(a)+v(b-a)$. In every Boolean algebra, $b=(b-a)\vee (a\wedge b)$ and $%
(b-a)\wedge (a\wedge b)=0$, so by additivity we have $v(b)=v(b-a)+v(a\wedge
b)$. In conclusion, $v(a\vee b)=v(a)+v(b)-v(a\wedge b)$.
\end{proof}

So a probability space $\ (A,\mathcal{C}_{A},p)$, where $A$ is a finite
sample space, $\mathcal{C}_{A}$ a field of sets on $A$ and $p$ a probability
measure satisfying Kolmogoroff's axioms with finite additivity, is a
particular case of valued Boolean algebra. In the following theorems 
some elementary properties of valuations are collected.

\begin{theorem}
\label{teo222} \hfill 

\begin{enumerate}
\item  If $\ \mathcal{A},\mathcal{B}\in L_{0,1}$, $(\mathcal{B},v)$ is a
valued bounded lattice and $\varphi :\mathcal{A}\rightarrow \mathcal{B}$ is
a morphism of bounded lattices, then $(\mathcal{A},v\circ \varphi )$ is a
valued bounded lattice$.$

\item  If $\ \mathcal{A},\mathcal{B}\in BA$, $(\mathcal{B},v)$ is a valued
Boolean algebra and $\varphi :\mathcal{A}\rightarrow \mathcal{B}$ is a
morphism of bounded lattices, then $(\mathcal{A},v\circ \varphi )$ is a
valued Boolean algebra$.$
\end{enumerate}
\end{theorem}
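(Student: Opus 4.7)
The plan is to verify axioms directly: the composition $v \circ \varphi$ inherits the valuation identity and the boundary conditions from $v$ because $\varphi$ preserves $\vee$, $\wedge$, $0$, and $1$.

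For part 1, I would first check the valuation identity (\ref{1}) for $v \circ \varphi$. Starting from $(v \circ \varphi)(a \vee b)$, I use that $\varphi$ is a lattice morphism to pull the join inside, giving $v(\varphi(a) \vee \varphi(b))$; then apply (\ref{1}) for $v$ on $\mathcal{B}$ to rewrite this as $v(\varphi(a)) + v(\varphi(b)) - v(\varphi(a) \wedge \varphi(b))$; then use $\varphi$'s preservation of meets to bring the $\wedge$ back outside, obtaining $(v \circ \varphi)(a) + (v \circ \varphi)(b) - (v \circ \varphi)(a \wedge b)$. For the boundary conditions, since $\varphi$ is a morphism of \emph{bounded} lattices, $\varphi(0_{\mathcal{A}}) = 0_{\mathcal{B}}$ and $\varphi(1_{\mathcal{A}}) = 1_{\mathcal{B}}$, so $(v \circ \varphi)(0) = v(0) = 0$ and $(v \circ \varphi)(1) = v(1) = 1$. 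Non-negativity is inherited for the same reason. This establishes that $(\mathcal{A}, v \circ \varphi)$ is a valued bounded lattice.

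For part 2, the key observation is that a Boolean valuation is, by definition, nothing more than a bounded-lattice valuation on an algebra that happens to be Boolean; no additional axioms are imposed. Hence, once part 1 is in hand, part 2 is immediate: $v \circ \varphi$ is a bounded-lattice valuation by part 1, and $\mathcal{A}$ is a Boolean algebra by hypothesis, so the pair $(\mathcal{A}, v \circ \varphi)$ is a valued Boolean algebra. Alternatively, one could invoke Theorem \ref{teoadd} and check additivity directly, but that is unnecessary.

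There is no real obstacle here; the only subtlety worth flagging is that the hypothesis asks only for $\varphi$ to be a morphism of \emph{bounded} lattices (not of Boolean algebras) in part 2. This is fine because the valuation axioms involve only $\vee$, $\wedge$, $0$, and $1$, so preservation of complement is not needed for the composition to remain a valuation; the Boolean structure of $\mathcal{A}$ is used only to certify that $(\mathcal{A}, v \circ \varphi)$ qualifies as a \emph{valued Boolean algebra} rather than merely a valued bounded lattice.
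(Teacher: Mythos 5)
Your proof is correct and, for part 1, essentially identical to the paper's: the same computation pulling $\vee$ and $\wedge$ through $\varphi$ and the same check of the boundary conditions $\varphi(0)=0$, $\varphi(1)=1$. For part 2 your route is actually cleaner than the paper's: the paper justifies part 2 by observing that a bounded-lattice morphism between Boolean algebras automatically preserves $\lnot$ and then says the proof is analogous, whereas you simply note that, by the paper's own definition, a Boolean valuation is nothing but a bounded-lattice valuation on an algebra that happens to be Boolean, so part 2 is an immediate corollary of part 1. Both arguments are valid; yours avoids an observation (preservation of complements) that the definitions render unnecessary.
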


\begin{proof}
1. As $\varphi $ is a bounded lattices morphism and $v$ is a bounded
lattices valuation, $v(\varphi (1^{\mathcal{A}}))=v(1^{\mathcal{B}})=1$. In
the same way, $v(\varphi (0^{\mathcal{A}}))=0$. We prove that $v\circ
\varphi $ satisfies (\ref{1}): 
\begin{eqnarray*}
v\circ \varphi (x\vee y) &=&v(\varphi (x)\vee \varphi (y)) \\
&=&v(\varphi (x))+v(\varphi (y))-v(\varphi (x)\wedge \varphi (y)) \\
&=&v(\varphi (x))+v(\varphi (y))-v(\varphi (x\wedge y)) \\
&=&v\circ \varphi (x)+v\circ \varphi (y)-v\circ \varphi (x\wedge y).
\end{eqnarray*}
2. We observe that \ a morphism of bounded lattices is also a morphism of
Boolean algebras: if $\wedge ,\vee $, $0$, $1$ are preserved by \ then $%
\lnot $ is preserved too. The result follows by an analogous proof.
\end{proof}

\begin{theorem}
\label{teoboolval}If $(\mathcal{A},v)$ is a Boolean valued algebra then

\begin{enumerate}
\item  $v$ is isotone,

\item  $v(\lnot a)=1-v(a)$.
\end{enumerate}
\end{theorem}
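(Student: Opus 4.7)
The plan is to handle (2) first, since it falls out of a single application of the defining valuation identity $(\ast)$, and then deduce (1) from additivity together with the standing non-negativity hypothesis on $v$.

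For (2), I would use the characteristic Boolean identities $a \vee \lnot a = 1$ and $a \wedge \lnot a = 0$. Plugging these into (\ref{1}) gives
\begin{equation*}
v(1) = v(a \vee \lnot a) = v(a) + v(\lnot a) - v(a \wedge \lnot a) = v(a) + v(\lnot a) - v(0).
\end{equation*}
Since a Boolean valuation satisfies $v(1) = 1$ and $v(0) = 0$, rearranging yields $v(\lnot a) = 1 - v(a)$.

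For (1), suppose $a \leq b$. In any Boolean algebra we have the decomposition $b = a \vee (b \wedge \lnot a)$ with $a \wedge (b \wedge \lnot a) = 0$. By Theorem \ref{teoadd}, the Boolean valuation $v$ is additive, so
\begin{equation*}
v(b) = v(a) + v(b \wedge \lnot a).
\end{equation*}
Because we restricted attention to non-negative valuations, $v(b \wedge \lnot a) \geq 0$, and therefore $v(a) \leq v(b)$.

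There is no real obstacle here; the only subtle point is remembering that isotonicity is not automatic from $(\ast)$ alone (substituting $a \leq b$ into (\ref{1}) gives only the tautology $v(b) = v(b)$), so one must invoke both additivity and the global non-negativity convention introduced at the start of the section.
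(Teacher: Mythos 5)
Your proof is correct and follows essentially the same route as the paper: the decomposition $b=a\vee(b\wedge\lnot a)$ with $a\wedge(b\wedge\lnot a)=0$ for isotonicity, and the identities $a\vee\lnot a=1$, $a\wedge\lnot a=0$ fed into $(\ast)$ for the complement formula. Your explicit remark that isotonicity requires the non-negativity convention (and not just $(\ast)$) is a welcome clarification of a step the paper leaves implicit.
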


\begin{proof}
1. If $a\leq b$ then $b=a\vee (b\wedge \lnot a)$ and $0=a\wedge (b\wedge \lnot
a)$. So $v(b)=v(a)+v(b\wedge \lnot a)$ and $v(a)\leq v(b)$ follows.

2. As $v(a\wedge \lnot a)=0$ e $v(a\vee \lnot a)=1$, we have $1=$ $v(a\vee
\lnot a)=v(a)+v(\lnot a)$
\end{proof}

As $v$ is isotone, every Boolean valuation takes its values in $[0,1]$.

Let $(\mathcal{A},v)$ and $(\mathcal{B},\mu )$ be valued bounded lattices
(valued Boolean algebras). We say that $\varphi :A\rightarrow B$ is a 
\textit{valued bounded lattices morphism} (\textit{valued Boolean algebra
morphism}) if $\varphi $ is a bounded lattices morphism and, for all $a\in A$%
, $v(a)=\mu (\varphi (a))$, i.e. $\varphi $ preserves not only the algebraic
structure, but also the valuations of individuals. We say that the\ valued
bounded lattices (valued Boolean algebras) $(\mathcal{A},v)$ and $(\mathcal{B%
},\mu )$ are \textit{isomorphic} if there is an isomorphism $\varphi
:A\rightarrow B$ of valued bounded lattices (valued Boolean algebras).

\begin{theorem}
\label{teovalind}If $\varphi :\mathcal{A}\rightarrow \mathcal{B}$ is a
bounded lattices isomorphism and $(\mathcal{A},v)$ is a bounded
lattice then $\mu =v\circ \varphi ^{-1}$ is a valued bounded lattices
valuation on$\ \mathcal{B}$ and $(\mathcal{A},v)$ and $(\mathcal{B},\mu )$
are isomorphic in $\varphi $ as valued bounded lattices.
\end{theorem}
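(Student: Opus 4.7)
The plan is to split the claim into two parts: first, that $\mu = v\circ\varphi^{-1}$ really is a bounded lattices valuation on $\mathcal{B}$; second, that $\varphi$, which is already a bounded lattices isomorphism by hypothesis, also preserves the valuation, so that it witnesses an isomorphism of valued bounded lattices between $(\mathcal{A},v)$ and $(\mathcal{B},\mu)$.

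For the first part, the key observation is that since $\varphi:\mathcal{A}\to\mathcal{B}$ is a bounded lattices isomorphism, the inverse map $\varphi^{-1}:\mathcal{B}\to\mathcal{A}$ is itself a morphism of bounded lattices. Theorem \ref{teo222}(1) then applies immediately with the roles of $\mathcal{A}$ and $\mathcal{B}$ exchanged: starting from the valued bounded lattice $(\mathcal{A},v)$ and the bounded lattices morphism $\varphi^{-1}:\mathcal{B}\to\mathcal{A}$, it follows that $(\mathcal{B}, v\circ\varphi^{-1}) = (\mathcal{B},\mu)$ is a valued bounded lattice. In particular $\mu(0^{\mathcal{B}})=0$, $\mu(1^{\mathcal{B}})=1$, and $\mu$ satisfies the valuation identity $(\ref{1})$.

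For the second part, the definition of valued bounded lattices morphism requires that $v(a)=\mu(\varphi(a))$ for every $a\in A$. This is immediate from the definition of $\mu$: $\mu(\varphi(a)) = v(\varphi^{-1}(\varphi(a))) = v(a)$, using $\varphi^{-1}\circ\varphi = \mathrm{id}_A$. Since $\varphi$ is by hypothesis a bijective bounded lattices morphism and now also preserves valuations, it is an isomorphism of valued bounded lattices, establishing the isomorphism $(\mathcal{A},v)\cong(\mathcal{B},\mu)$.

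I do not anticipate any real obstacle: essentially all of the work is done by Theorem \ref{teo222}(1), and the valuation-preservation condition collapses to the trivial cancellation $\varphi^{-1}\circ\varphi=\mathrm{id}_A$. The only small care needed is to notice that one must apply Theorem \ref{teo222}(1) to $\varphi^{-1}$ rather than to $\varphi$, which is legitimate precisely because $\varphi$ is an isomorphism (and not merely a morphism).
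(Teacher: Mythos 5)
Your proof is correct and follows essentially the same route as the paper: apply Theorem \ref{teo222}(1) to the morphism $\varphi^{-1}$ to get that $\mu=v\circ\varphi^{-1}$ is a bounded lattices valuation on $\mathcal{B}$, then check valuation preservation via the cancellation $v(\varphi^{-1}(\varphi(a)))=v(a)$. Your explicit remark that the theorem must be applied to $\varphi^{-1}$ rather than $\varphi$ is exactly the point the paper leaves implicit.
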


\begin{proof}
By theorem \ref{teo222} we can define a valuation $\mu $ on $\mathcal{B}$
setting $\mu (x)=v(\varphi ^{-1}(x))$. Now it can be easily seen that $(%
\mathcal{A},v)$ and $(\mathcal{B},\mu )$ are isomorphic in $\varphi $ as
valued bounded lattices: by hypothesis $\varphi $ is a bounded
lattices isomorphism and, for all $a\in A$, $v(a)=v(\varphi ^{-1}(\varphi
(a)))=\mu (\varphi (a))$.
\end{proof}

When $\varphi :\mathcal{A}\rightarrow \mathcal{B}$ and $(\mathcal{A},v)$ are
as above, we say that $\mu $ is the \textit{valuation induced} \textit{on} $%
\mathcal{B}$ \textit{by} $(\mathcal{A},v)$ \textit{and} $\varphi $.

In every bounded lattice $\mathcal{A}$ we can associate to every $a\in A$
the surjection $f_{a}:A\rightarrow \lbrack 0,a]$ defined by $%
f_{a}(x)=a\wedge x$. We call $f_{a}$ the \textit{relativization associated}
to $a$. The closed interval $[0,a]$ is the domain of a bounded lattice $%
\mathcal{B}$ with $0^{\mathcal{B}}=0^{\mathcal{A}}$ and $1^{\mathcal{B}}=a$.
(Meet and join are inherited from $\mathcal{A}$.) If $\mathcal{A}$ is
distributive then $f_{a}$ is a bounded lattices epimorphism from $\mathcal{A}
$ to $\mathcal{B}$.

The same result can be obtained when $\mathcal{A}$ is a Boolean algebra.
Firstly we must make $[0,a]$ into a Boolean algebra $\mathcal{B}$ by
defining meet, join, top and bottom as above and setting $\lnot ^{\mathcal{B}%
}(x)=a\wedge \lnot ^{\mathcal{A}}(x)$: the complement of $x$ in $\mathcal{B}$
is just the complement of $x$ in $\mathcal{A}$ relativized to $[0,a]$. It
can be easily proved that $f_{a}$ is a Boolean epimorphism from $\mathcal{A}$
to $\mathcal{B}$.

Now we study the behavior of valuations with respect to relativizations. If $%
v$ is a valuation on $\mathcal{A}$ we define, for every $a\in A$ such that $%
v(a)\neq 0$, a function $v_{a}:[0,a]\rightarrow \lbrack 0,1]$ setting 
\begin{equation*}
v_{a}(x)=v(x)\cdot \frac{1}{v(a)}.
\end{equation*}
We call $v_{a}$ the \textit{relativized valuation} associated to $a$ and $v$.

\begin{theorem}
\label{teorelativizedval} For every valued Boolean algebra $(\mathcal{A},v)$
and for all $a\in A$ such that $v(a)\neq 0$, $v_{a}$ is a valuation on $[0,a]
$ and $([0,a],v_{a})$ is a valued Boolean algebra.
\end{theorem}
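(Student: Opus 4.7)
The plan is to verify directly the three requirements for $v_{a}$ to be a Boolean valuation on the bounded lattice $[0,a]$: the endpoint conditions $v_{a}(0)=0$ and $v_{a}(a)=1$, the valuation identity (\ref{1}), and the fact that $v_{a}$ takes values in $[0,1]$. The ambient algebra $\mathcal{B}$ on $[0,a]$ was already shown in the preceding discussion to be a Boolean algebra with $0^{\mathcal{B}}=0$, $1^{\mathcal{B}}=a$, inheriting meet and join from $\mathcal{A}$, so there is nothing to check on the algebraic side; only the behaviour of the normalised function $v_{a}(x)=v(x)/v(a)$ needs to be inspected.

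First I would handle the endpoints: $v_{a}(0^{\mathcal{B}})=v(0)/v(a)=0$ since $v$ is a Boolean valuation, and $v_{a}(1^{\mathcal{B}})=v_{a}(a)=v(a)/v(a)=1$ by the hypothesis $v(a)\neq 0$. Next, for arbitrary $x,y\in[0,a]$, since the meet and join in $\mathcal{B}$ coincide with those in $\mathcal{A}$, applying (\ref{1}) to $v$ and dividing both sides by $v(a)$ gives
\begin{equation*}
v_{a}(x\vee y)=\frac{v(x\vee y)}{v(a)}=\frac{v(x)+v(y)-v(x\wedge y)}{v(a)}=v_{a}(x)+v_{a}(y)-v_{a}(x\wedge y),
\end{equation*}
so (\ref{1}) holds for $v_{a}$.

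Finally I would check that $v_{a}[[0,a]]\subseteq[0,1]$. For $x\in[0,a]$ we have $0\leq x\leq a$, and by Theorem \ref{teoboolval} the valuation $v$ is isotone, so $0\leq v(x)\leq v(a)$; dividing by the positive quantity $v(a)$ yields $0\leq v_{a}(x)\leq 1$. This shows that $v_{a}$ is a bounded lattices valuation on $\mathcal{B}$, and since $\mathcal{B}$ is a Boolean algebra, $([0,a],v_{a})$ is a valued Boolean algebra.

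There is no real obstacle here: the entire argument is a routine normalisation, and the only slightly delicate point is being careful that the operations of $\mathcal{B}$ genuinely agree with those of $\mathcal{A}$ on elements of $[0,a]$ (so that (\ref{1}) transfers) and that the hypothesis $v(a)\neq 0$ is used precisely to make the division well-defined.
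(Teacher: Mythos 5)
Your proof is correct and follows essentially the same route as the paper's: a direct verification that the normalised function $v_{a}(x)=v(x)/v(a)$ satisfies the endpoint conditions and the identity (\ref{1}) on the relativized algebra $[0,a]$. The only difference is that you additionally verify $v_{a}[[0,a]]\subseteq[0,1]$ via isotonicity of $v$, a point the paper's proof leaves implicit; this is a harmless and slightly more complete rendering of the same argument.
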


\begin{proof}
By definition, we have $v_{a}(0)=0$ and $v_{a}(a)=1$. For all $x$, $y\leq a$,
\begin{eqnarray*}
v_{a}(x\vee y)&=&\frac{v(x\vee y)}{v(a)} \\
&=&\frac{v(x)}{v(a)}+\frac{v(y)}{v(a)}-\frac{v(x\wedge y)}{v(a)} \\
&=&v_{a}(x)+v_{a}(y)-v_{a}(x\wedge y),
\end{eqnarray*}
so $v_{a}$ is a valuation on $[0,a]$.
\end{proof}

We suppose $v(a)\neq 0$. As $f_{a}$ is a morphism from $\mathcal{A}$ to $%
[0,a]$ and $v_{a}$ is a valuation on $[0,a]$, the function $%
v(-|a)=v_{a}\circ f_{a}$ is a valuation on $\mathcal{A}$, by theorem \ref
{teo222}. We call $v(-|a)$ the \textit{conditional valuation} associated to $%
v$ and $a$. It can be immediately seen that the concept of conditional
probability is only a particular case of \ conditional valuation: when $%
\mathcal{A}$ is an algebra of events and $v$ is a probability measure on $%
\mathcal{A}$, $\ v(x|a)=v(x\wedge a)/v(a)$ is the conditional probability of 
$x$ with respect to $a$.

\section{Probability of sentences\label{parprobsent}}

In classical probability theory, events are represented by sets and the
probability value of an event can be understood as the measure of a set.
This set theoretic picture of probability can be replaced by a linguistic
one where sentences are the bearers of probability. From this point of view,
it is natural to conceive the number attached to a sentence $\alpha $ as a
degree of belief, representing the extent to which you believe it likely
that $\alpha $ will turn out to be true.

In the following we will denote with $L_{n}$ a sentential $n$-ary language
based on the sentential variables $P_{n}=\{p_{1},...,p_{n}\}$, the
connectives $\{\lnot ,\wedge ,\vee \}$ and the constants $\{0,1\}$. We
denote with $F_{n}$ the set of formulas of $L_{n}$. (We write simply $L$, $P$
and $F$ when no confusion is possible.) We say that $\pi :F\rightarrow
\lbrack 0,1]$ is a \textit{probability function on} $L$ if the following
axioms are satisfied:

\begin{enumerate}
\item  if $\models \alpha $ then $\pi (\alpha )=1$,

\item  if $\models \lnot (\alpha \wedge \beta )$ then $\pi (\alpha \vee
\beta )=\pi (\alpha )+\pi (\beta )$,
\end{enumerate}

\noindent where Greek letters $\alpha ,\beta ,...$ are metalinguistic
variables for formulas. We say that $\alpha $ and $\beta $ are \textit{%
incompatible} iff $\models \lnot (\alpha \wedge \beta )$. In the following
theorem some fundamental properties of $\pi $ are collected.

\begin{theorem}
\label{teoprob}If $\pi $ is a probability function on $L$, then

\begin{enumerate}
\item  $\pi (\lnot \alpha )=1-\pi (\alpha ),$

\item  $\models \alpha \longleftrightarrow \beta $ implies $\pi (\alpha
)=\pi (\beta )$,

\item  $\models \alpha \rightarrow \beta $ implies $\pi (\alpha )\leq \pi
(\beta )$,

\item  $\pi (\alpha \vee \beta )=\pi (\alpha )+\pi (\beta )-\pi (\alpha
\wedge \beta )$.
\end{enumerate}
\end{theorem}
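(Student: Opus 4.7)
The plan is to prove the four properties in the order stated, using only the two axioms defining a probability function, since each part can be bootstrapped from the previous ones together with suitable propositional tautologies.

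For part 1, I would observe that $\alpha$ and $\lnot\alpha$ are incompatible (as $\models \lnot(\alpha\wedge\lnot\alpha)$), so axiom 2 yields $\pi(\alpha\vee\lnot\alpha)=\pi(\alpha)+\pi(\lnot\alpha)$. Since $\models\alpha\vee\lnot\alpha$, axiom 1 gives $\pi(\alpha\vee\lnot\alpha)=1$, so $\pi(\lnot\alpha)=1-\pi(\alpha)$.

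For part 2, the key observation is that whenever $\models\alpha\leftrightarrow\beta$, the formula $\alpha\vee\lnot\beta$ is a tautology (if $\alpha$ holds, trivially; if $\lnot\alpha$ holds, then $\lnot\beta$ holds). Moreover $\alpha$ and $\lnot\beta$ are incompatible, because $\models\alpha\to\beta$ means $\models\lnot(\alpha\wedge\lnot\beta)$. Combining axiom 1, axiom 2, and part 1 yields $1=\pi(\alpha)+\pi(\lnot\beta)=\pi(\alpha)+1-\pi(\beta)$, hence $\pi(\alpha)=\pi(\beta)$. For part 3, assuming $\models\alpha\to\beta$, I would use the tautology $\models\beta\leftrightarrow\alpha\vee(\beta\wedge\lnot\alpha)$ (which depends on $\alpha\to\beta$, via the contrapositive $\lnot\beta\to\lnot\alpha$). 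Part 2 then gives $\pi(\beta)=\pi(\alpha\vee(\beta\wedge\lnot\alpha))$, and since $\alpha$ and $\beta\wedge\lnot\alpha$ are manifestly incompatible, axiom 2 gives $\pi(\beta)=\pi(\alpha)+\pi(\beta\wedge\lnot\alpha)\geq\pi(\alpha)$, using non-negativity.

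For part 4, I would decompose twice: first $\models\alpha\vee\beta\leftrightarrow\alpha\vee(\beta\wedge\lnot\alpha)$ with $\alpha$ and $\beta\wedge\lnot\alpha$ incompatible gives, via parts 2 and axiom 2, $\pi(\alpha\vee\beta)=\pi(\alpha)+\pi(\beta\wedge\lnot\alpha)$. Second, $\models\beta\leftrightarrow(\alpha\wedge\beta)\vee(\lnot\alpha\wedge\beta)$ with the two disjuncts incompatible gives $\pi(\beta)=\pi(\alpha\wedge\beta)+\pi(\beta\wedge\lnot\alpha)$, hence $\pi(\beta\wedge\lnot\alpha)=\pi(\beta)-\pi(\alpha\wedge\beta)$. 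Substituting yields the inclusion-exclusion identity.

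The only real subtlety is establishing the correct order of implication: parts 3 and 4 each rely on part 2 (to transfer $\pi$ across logically equivalent formulas), and part 2 in turn rests on part 1. Once the dependency order is fixed, the work reduces to identifying, in each case, the right classical tautology that lets axiom 2 be applied to an incompatible disjunction.
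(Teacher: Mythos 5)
Your proposal is correct and follows essentially the same route as the paper's own proof: part 1 from the incompatibility of $\alpha$ and $\lnot\alpha$, part 2 by applying axioms 1 and 2 to a tautologous disjunction of incompatible formulas (you use $\alpha\vee\lnot\beta$ where the paper uses $\lnot\alpha\vee\beta$, a purely cosmetic difference), part 3 via the decomposition $\beta\leftrightarrow\alpha\vee(\beta\wedge\lnot\alpha)$, and part 4 via the same two disjoint decompositions of $\alpha\vee\beta$ and $\beta$. The dependency order you identify (1, then 2, then 3 and 4 via 2) matches the paper exactly.
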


\begin{proof}
1. We have $\models \lnot (\alpha \wedge \lnot \alpha )$, so $\pi (\alpha
)+\pi (\lnot \alpha )=\pi (\alpha \vee \lnot \alpha )$ by axiom 2. On the
other side $\models \alpha \vee \lnot \alpha $, so $\pi (\alpha )+\pi (\lnot
\alpha )=1$ by axiom 1. Then $\pi (\lnot \alpha )=1-\pi (\alpha ).$

2. From our hypothesis we have $\models \lnot \alpha \vee \beta $, so $1=\pi
(\lnot a\vee \beta )$ by axiom 1. From our hypothesis we have $\models \alpha \vee
\lnot \beta $ and then $\models \lnot (\lnot \alpha \wedge \beta )$.
Thus, by axiom 2, we have $\pi (\lnot \alpha \vee \beta )=\pi (\lnot \alpha
)+\pi (\beta )$ and 
\begin{equation*}
1=\pi (\lnot a\vee \beta )=\pi (\lnot \alpha )+\pi (\beta )=1-\pi (\alpha
)+\pi (\beta ),
\end{equation*}
so $\pi (\alpha )=\pi (\beta )$.

3. The following is an easy proposition of classical logic: if $\models
\alpha \rightarrow \beta $ then there is a $\gamma $ such that: 
\begin{eqnarray*}
\text{ i)}\models \alpha \vee \gamma \leftrightarrow \beta \text{,} \\
\text{ii)}\models \lnot (\alpha \wedge \gamma )\text{.}
\end{eqnarray*}
(Set $\gamma =\beta \wedge \lnot \alpha $.) \ So from i) and point 2) above, we
have $\pi (\alpha \vee \gamma )=\pi (\beta )$. From ii) and from axiom 2, we
have $\pi (\alpha \vee \gamma )=\pi (\alpha )+\pi (\gamma )$. So $\pi (\beta
)=\pi (\alpha )+\pi (\gamma )$ and $\pi (\alpha )\leq \pi (\beta )$.

4. We can prove the following equations:

as $\models (\alpha \vee \beta
)\leftrightarrow (\alpha \vee (\beta \wedge \lnot \alpha ))$, by point 2) above 
we have
\begin{equation*}
\pi (\alpha \vee \beta )=\pi (\alpha \vee (\beta \wedge \lnot \alpha )),
\end{equation*}
as $\models \lnot (\alpha \wedge (\beta \wedge \lnot \alpha ))$, by axiom 2
we have 
\begin{equation*}
\pi (\alpha \vee (\beta \wedge \lnot \alpha ))=\pi (\alpha )+\pi (\beta
\wedge \lnot \alpha ),
\end{equation*}
as $\models \lnot ((\beta \wedge \lnot \alpha )\wedge (\alpha \wedge \beta ))
$, by axiom 2 we have 
\begin{equation*}
\pi ((\beta \wedge \lnot \alpha )\vee (\alpha \wedge \beta ))=\pi (\beta
\wedge \lnot \alpha )+\pi (\alpha \wedge \beta ),
\end{equation*}
as $\models ((\beta \wedge \lnot \alpha )\vee (\alpha \wedge \beta
))\leftrightarrow \beta $, by point 2) above we have 
\begin{equation*}
\pi ((\beta \wedge \lnot \alpha )\vee (\alpha \wedge \beta ))=\pi (\beta ).
\end{equation*}
So we have $\pi (\alpha \vee \beta )=\pi (\alpha )+\pi (\beta \wedge \lnot
\alpha )$, by the first two equations, and $\pi (\beta )=\pi (\beta \wedge
\lnot \alpha )+\pi (\alpha \wedge \beta )$ by the last two. Then we can
conclude with $\pi (\alpha \vee \beta )=\pi (\alpha )+\pi (\beta )-\pi
(\alpha \wedge \beta )$.$\medskip $
\end{proof}

For every probability function $\pi $ on $L$ and for all $\delta $ such that 
$\pi (\delta )\neq 0$, we define the \textit{conditional probability with
respect to} $\delta $ as a 1-ary function $\pi (-|\delta ):F\rightarrow
\lbrack 0,1]$ setting 
\begin{equation*}
\pi (\alpha |\delta )=\frac{\pi (\alpha \wedge \delta )}{\pi (\delta )}.
\end{equation*}
As $\delta $ varies over sentences that satisfy $\pi (\delta )\neq 0$, we
can see $\pi (x|y)$ as a 2-ary function. The restriction on the second
argument cannot be avoided, unless we are disposed to accept conditional
probability as a partial function.

Before proving a result similar to theorem \ref{teoprob}, we introduce a
generalization of the concept of probability function. If we observe axioms
1) and 2) in the definition of probability function, it is clear the
fundamental role of logical truth. If we substitute `logical truth' with
`consequence of a set of formulas $\Gamma $' we arrive at a relativized
concept of probability function, characterized by the two following facts:
i) not only every tautology, but also every logical consequence of $\Gamma $
has probability $1$, ii) two formulas may be considered incompatible not
only with respect to logic (absolutely incompatible), but also with respect
\ to a set of formulas $\Gamma $. Then we can define, for every $\delta $
such that $\delta \nvDash 0$, the concept of \textit{probability function on}
$L$ \textit{relative to} $\delta $ as a function $\pi _{\delta
}:F\rightarrow \lbrack 0,1]$ satisfying the following axioms:

\begin{enumerate}
\item  if $\delta \models \alpha $ then $\pi _{\delta }(\alpha )=1$,

\item  if $\delta \models \lnot (\alpha \wedge \beta )$ then $\pi _{\delta
}(\alpha \vee \beta )=\pi _{\delta }(\alpha )+\pi _{\delta }(\beta )$.
\end{enumerate}

\noindent If $\models \delta $ then $\pi _{\delta }$ is simply a probability
function. If $\pi _{\delta }$ is a probability function relative to $\delta $%
, then $\pi _{\delta }$ is a probability function, because $\models \alpha $
implies $\delta \models \alpha $. We could have defined the still more
general notion of a \ probability function `relative to a set of sentences $%
\Gamma $', but there is no point in doing so in the context $n$-ary
languages. It can be easily seen that, if $L$ contains only a finite number
of variables, for every set of sentences $\Gamma $ there is a formula $%
\delta $ logically equivalent to $\Gamma $.

The following theorem \ is analogous to theorem \ref{teoprob} and the proof
is similar.

\begin{theorem}
\label{teoprobcond}If $\pi _{\delta }$ is a probability function relative tu 
$\delta $, then

\begin{enumerate}
\item  $\pi _{\delta }(\lnot \alpha )=1-\pi _{\delta }(\alpha ),$

\item  $\delta \models \alpha \longleftrightarrow \beta $ implies $\pi
_{\delta }(\alpha )=\pi _{\delta }(\beta )$,

\item  $\delta \models \alpha \rightarrow \beta $ implies $\pi _{\delta
}(\alpha )\leq \pi _{\delta }(\beta )$,

\item  $\pi _{\delta }(\alpha \vee \beta )=\pi _{\delta }(\alpha )+\pi
_{\delta }(\beta )-\pi _{\delta }(\alpha \wedge \beta )$.
\end{enumerate}
\end{theorem}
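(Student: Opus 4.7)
My plan is to mirror the proof of Theorem \ref{teoprob} step by step, substituting $\delta \models$ for $\models$ throughout. The transfer is legitimate because every classical tautology $\phi$ satisfies $\delta \models \phi$ (and more generally, $\delta \models$ is closed under tautological consequence), and because the axioms defining $\pi_\delta$ are exactly those for $\pi$ with $\models$ replaced by $\delta \models$. So any step in the original proof that invoked $\models \phi$ for a tautology $\phi$ lifts for free, and any step that applied axiom 1 or 2 for $\pi$ lifts to the corresponding axiom for $\pi_\delta$.

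For part 1 I would observe that $\lnot(\alpha \wedge \lnot \alpha)$ and $\alpha \vee \lnot \alpha$ are tautologies, hence $\delta$-consequences, and axioms 1 and 2 then yield $\pi_\delta(\lnot \alpha) = 1 - \pi_\delta(\alpha)$ exactly as before. For part 2, the hypothesis $\delta \models \alpha \leftrightarrow \beta$ tautologically entails both $\delta \models \lnot \alpha \vee \beta$ and $\delta \models \lnot(\lnot \alpha \wedge \beta)$, so axioms 1, 2 together with part 1 reproduce the one-line computation that gives $\pi_\delta(\alpha) = \pi_\delta(\beta)$. For part 4, the four classical equivalences chained together in the proof of Theorem \ref{teoprob} (namely $(\alpha \vee \beta) \leftrightarrow (\alpha \vee (\beta \wedge \lnot \alpha))$, the two incompatibilities, and $((\beta \wedge \lnot \alpha) \vee (\alpha \wedge \beta)) \leftrightarrow \beta$) are all tautologies, so each transfers to $\delta \models$ without further assumption and the algebraic bookkeeping goes through verbatim.

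The one spot where the relativized hypothesis plays a non-trivial role is part 3. Setting $\gamma = \beta \wedge \lnot \alpha$, the incompatibility $\lnot(\alpha \wedge \gamma)$ is a tautology, but the equivalence $(\alpha \vee \gamma) \leftrightarrow \beta$ is not: in classical logic $\alpha \vee (\beta \wedge \lnot \alpha)$ simplifies only to $\alpha \vee \beta$, and the further reduction $(\alpha \vee \beta) \leftrightarrow \beta$ is precisely $\alpha \to \beta$. So here I would invoke the hypothesis $\delta \models \alpha \to \beta$ directly to obtain $\delta \models (\alpha \vee \gamma) \leftrightarrow \beta$; then part 2 and axiom 2 give $\pi_\delta(\beta) = \pi_\delta(\alpha) + \pi_\delta(\gamma)$, and non-negativity of $\pi_\delta$ completes the inequality. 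No step poses a genuine obstacle: the whole proof is a bookkeeping check that each logical fact invoked in Theorem \ref{teoprob} is either a tautology or a tautological consequence of the relativized hypothesis.
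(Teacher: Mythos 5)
Your proof is correct and is precisely the argument the paper has in mind: the paper omits the proof of this theorem, stating only that it is "analogous to theorem \ref{teoprob} and the proof is similar," and your line-by-line transfer (replacing $\models$ by $\delta\models$ and noting that every tautology is a $\delta$-consequence) is that similar proof carried out in full. Your observation that in part 3 the equivalence $(\alpha\vee\gamma)\leftrightarrow\beta$ is not a tautology but a genuine consequence of the relativized hypothesis $\delta\models\alpha\rightarrow\beta$ is exactly the right point to check, and it goes through.
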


\begin{theorem}
The conditional probability $\pi (-|\delta )$ is a probability function
relative to $\delta $ and then a probability function.
\end{theorem}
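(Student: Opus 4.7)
The plan is to verify directly the two defining axioms for a probability function relative to $\delta$, evaluating $\pi(\alpha|\delta)=\pi(\alpha\wedge\delta)/\pi(\delta)$ and reducing each claim to an application of the corresponding axiom for $\pi$ on the numerator. Once this is done, the ``and then a probability function'' clause is free: it was already observed after the definition that any probability function relative to $\delta$ is a probability function, since $\models\alpha$ implies $\delta\models\alpha$ (and similarly for incompatibility).

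For axiom 1, I would assume $\delta\models\alpha$. This is equivalent to $\models\delta\rightarrow\alpha$, which gives $\models\delta\leftrightarrow(\alpha\wedge\delta)$. Applying part 2 of Theorem \ref{teoprob} yields $\pi(\alpha\wedge\delta)=\pi(\delta)$, so $\pi(\alpha|\delta)=1$.

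For axiom 2, I would assume $\delta\models\lnot(\alpha\wedge\beta)$. The key observation is the distributive identity $\models((\alpha\vee\beta)\wedge\delta)\leftrightarrow((\alpha\wedge\delta)\vee(\beta\wedge\delta))$, together with the fact that the two disjuncts on the right are absolutely incompatible: indeed $(\alpha\wedge\delta)\wedge(\beta\wedge\delta)$ is $\alpha\wedge\beta\wedge\delta$, and the hypothesis $\delta\models\lnot(\alpha\wedge\beta)$ unfolds to $\models\lnot(\alpha\wedge\beta\wedge\delta)$. Hence, by axiom 2 for $\pi$ applied to $\alpha\wedge\delta$ and $\beta\wedge\delta$, combined with part 2 of Theorem \ref{teoprob} on the equivalent reformulation of $(\alpha\vee\beta)\wedge\delta$, we obtain
\[
\pi\bigl((\alpha\vee\beta)\wedge\delta\bigr)=\pi(\alpha\wedge\delta)+\pi(\beta\wedge\delta).
\]
Dividing by $\pi(\delta)\neq 0$ gives $\pi(\alpha\vee\beta|\delta)=\pi(\alpha|\delta)+\pi(\beta|\delta)$, as required.

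I do not expect any genuine obstacle: the proof is essentially a bookkeeping exercise in translating statements about $\pi(-|\delta)$ back into statements about $\pi$ via the definition, then invoking Theorem \ref{teoprob}. The only point that needs care is to keep straight the interplay between consequence relative to $\delta$ and absolute logical consequence, which is managed by the two equivalences $\models\delta\leftrightarrow(\alpha\wedge\delta)$ (for axiom 1) and $\models\lnot(\alpha\wedge\beta\wedge\delta)$ (for axiom 2).
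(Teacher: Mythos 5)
Your proof is correct and follows essentially the same route as the paper's: the same reduction of axiom 1 to $\models\delta\leftrightarrow(\alpha\wedge\delta)$ and point 2 of Theorem \ref{teoprob}, and the same distributivity-plus-absolute-incompatibility step for axiom 2. The only additions in the paper are two preliminary checks you omit: that $\delta\nvDash 0$ (required by the definition of a probability function relative to $\delta$, and following from $\pi(\delta)\neq 0$) and that $\pi(-|\delta)$ indeed takes values in $[0,1]$ (via $\alpha\wedge\delta\models\delta$ and point 3 of Theorem \ref{teoprob}).
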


\begin{proof}
In order to show that $\pi (x|\delta )$ is a probability function relative
to $\delta $, we start proving that $\delta \nvDash 0$. Suppose toward a
contradiction that $\delta \models 0$, then $1\models \lnot \delta $ and $%
\pi (\lnot \delta )=1$ and so $\pi (\delta )=0$. This is absurd, because the
conditional probability $\pi (x|\delta )$ requires $\pi (\delta )\neq 0$. \
Then we verify that $\pi (x|\delta )$ takes values in $[0,1]$. On one side,
for all $\alpha $ we have $0\leq \pi (\alpha |\delta )$, as $\pi (\alpha
\wedge \delta )$, $\pi (\delta )\geq 0$. On the other side, $\alpha \wedge
\delta \models \delta $ implies $\pi (\alpha \wedge \delta )\leq \pi (\delta
)$, by point 3) of theorem \ref{teoprob}, so $\pi (\alpha |\delta )\leq 1$.
We verify the first axiom: if $\delta \models \alpha $ then $\models \delta
\longleftrightarrow \alpha \wedge \delta $ and, by point 2) of theorem \ref
{teoprob}, $\pi (\alpha \wedge \delta )=\pi (\delta )$ and so $\pi (\alpha
|\delta )=1$. We verify the second axiom. We suppose that $\delta \models
\lnot (\alpha \wedge \beta )$. Then $\ $%
\begin{eqnarray*}
\pi (\alpha \vee \beta |\delta )&=&\frac{\pi ((\alpha \vee \beta )\wedge
\delta )}{\pi (\delta )} \\
&=&\frac{\pi ((\alpha \wedge \delta )\vee (\beta \wedge \delta ))}{\pi (\delta
)} \\
&=&\frac{\pi (\alpha \wedge \delta )}{\pi (\delta )}+\frac{\pi (\beta \wedge
\delta )}{\pi (\delta )} \\
&=&\pi (\alpha |\delta )+\pi (\beta |\delta ).
\end{eqnarray*}
The second line follows from point 2) of theorem \ref{teoprob}. The third line
follows from the second axiom on $\pi $ because $\models \lnot ((\alpha
\wedge \delta )\wedge (\beta \wedge \delta ))$ holds. In fact, from our
hypothesis $\models \delta \rightarrow \lnot (\alpha \wedge \beta )$ holds
and 
\begin{equation*}
\models (\delta \rightarrow (\lnot \alpha \vee \lnot \beta
))\longleftrightarrow ((\lnot \delta \vee \lnot \alpha )\vee (\lnot \delta
\vee \lnot \beta ))\longleftrightarrow (\lnot (\delta \wedge \alpha )\vee
\lnot (\delta \wedge \beta )).
\end{equation*}
\end{proof}

\section{Translatability\label{partrad}}

There are two fundamental ways of understanding probability, as the measure
of a set representing an event and as the degree of belief in a sentence
describing an event: we show that these two ways of
understanding probability can be translated one into the other. As we are
concerned with two kinds of subjects bearers of probability, respectively
sets and sentences, we'll find a common ground in the realm of Boolean
algebras, \ where the set-theoretic aspect of events is naturally \
represented by fields of sets and the logical-linguistic aspect of events is
represented by Lindenbaum algebras.

Suppose that the notion of probability be given as probability of sentences, by
means of a probability function $\pi $ on $L_{n}$. We aim to define a
probability space $(A,\mathcal{P}(A),p)$ where the probability given by $\pi 
$ is translated in terms of measure of sets; namely we aim to define a
function $\varphi $ that takes every formula $\alpha \in L_{n}$ to an event $%
\varphi (\alpha )$ of $\mathcal{P}(A)$ in such a way that $\pi (\alpha
)=p(\varphi (\alpha ))$ holds. The first step in this translation is the
passage from a probability function $\pi $ to a valuation on $\mathcal{F}%
_{n}/\sim $, the Lindenbaum algebra of \ formulas in $L_{n}$.

We shortly recall the construction of the Lindenbaum algebra of formulas.
The set $2=\{0,1\}$ is the set of the classical truth values and $2^{n}$ is
the set of all possible worlds (truth-value assignments to the variables in $%
P_{n}$). We can attribute a meaning , conceived as the set of possible
worlds in which $\alpha $ holds true, to every formula $\alpha $: this is
the task of a function $M$ that takes $F_{n}$ into the field of sets $%
\mathcal{P}(2^{n})=(P(2^{n}),\cap ,\cup ,-,\emptyset ,2^{n})$. The inductive
definition of $M$ is as follows: 
\begin{eqnarray*}
M(p_{i}) &=&\{s\in 2^{n}:s(i)=1\} \\
M(\alpha \wedge \beta ) &=&M(\alpha )\cap M(\beta ), \\
M(\alpha \vee \beta ) &=&M(\alpha )\cup M(\beta ), \\
M(\lnot \alpha ) &=&-M(\alpha ), \\
M(0) &=&(\emptyset ), \\
M(1) &=&2^{n}.
\end{eqnarray*}

\noindent If we denote with $\mathcal{F}_{n}$ be the algebra of formulas of $%
L_{n}$, i.e. the absolutely free algebra on the generators $%
P_{n}=\{p_{1},...,p_{n}\}$, then $M$ is the unique extension of the function 
$j(p_{i})=\{s\in 2^{n}:s(i)=1\}$ from $P_{n}$ to $\mathcal{P}(2^{n})$, to a
homomorphism from $\mathcal{F}_{n}$ to $\mathcal{P}(2^{n})$.  We define a
congruence relation on formulas setting $\alpha \sim \beta $ iff $\alpha
\models \beta $ and $\beta \models \alpha $ iff $M(\alpha )=M(\beta )$,
collecting in the same block formulas with identical meaning. The Lindenbaum
algebra of $L_{n}$ is $\mathcal{F}_{n}/\sim $. \ The top of the algebra is $%
1=|1|$, the set of all tautologies, the bottom is $0=|0|$, the set of all
contradictions. As every subset $X\subseteq 2^{n}$ can be defined by an $n$%
-ary formula $\alpha $, $M$ is a surjective function, so there is an
isomorphism $\psi $ from $\mathcal{F}_{n}/\sim $ to $\mathcal{P}(2^{n})$
defined by $\psi (|\alpha |)=M(\alpha )$. When $L$ has numerably many variables, 
$\mathcal{F}/\sim $ is
isomorphic to a proper subalgebra of $\mathcal{P}(2^{\omega })$.

\begin{theorem}
\label{teo224}If $\pi $ is a probability function on $L_{n}$, then $\pi
^{\ast }:\mathcal{F}_{n}/\!\sim \ \rightarrow \lbrack 0,1]$ defined by $\pi
^{\ast }(|\alpha |)=\pi (\alpha )$ is a Boolean valuation on $\mathcal{F}%
_{n}/\!\sim $
\end{theorem}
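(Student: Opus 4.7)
The plan is to verify the three requirements for $\pi^{\ast}$ to be a Boolean valuation on the Lindenbaum algebra $\mathcal{F}_{n}/\!\sim$: well-definedness on equivalence classes, the boundary values $\pi^{\ast}(0)=0$ and $\pi^{\ast}(1)=1$, and the valuation identity (\ref{1}).

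First I would establish well-definedness. Since $\alpha\sim\beta$ is defined by $\models\alpha\leftrightarrow\beta$, point 2 of Theorem \ref{teoprob} gives immediately $\pi(\alpha)=\pi(\beta)$, so the specification $\pi^{\ast}(|\alpha|)=\pi(\alpha)$ does not depend on the choice of representative. Next, the top of $\mathcal{F}_{n}/\!\sim$ is $|1|$ and $\models 1$, so axiom~1 for probability functions yields $\pi^{\ast}(1)=\pi(1)=1$; the bottom is $|0|$, and since $\models\lnot 0$, point 1 of Theorem \ref{teoprob} gives $\pi(0)=1-\pi(\lnot 0)=0$, so $\pi^{\ast}(0)=0$.

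For the valuation identity, I would use the fact that in the Lindenbaum algebra the operations are computed representative-wise: $|\alpha|\vee|\beta|=|\alpha\vee\beta|$ and $|\alpha|\wedge|\beta|=|\alpha\wedge\beta|$. Then
\begin{eqnarray*}
\pi^{\ast}(|\alpha|\vee|\beta|) &=& \pi(\alpha\vee\beta) \\
&=& \pi(\alpha)+\pi(\beta)-\pi(\alpha\wedge\beta) \\
&=& \pi^{\ast}(|\alpha|)+\pi^{\ast}(|\beta|)-\pi^{\ast}(|\alpha|\wedge|\beta|),
\end{eqnarray*}
where the middle step is exactly point 4 of Theorem \ref{teoprob}. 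This shows $\pi^{\ast}$ satisfies (\ref{1}), and combined with $\pi^{\ast}(0)=0$, $\pi^{\ast}(1)=1$, it is a bounded lattice valuation on the Boolean algebra $\mathcal{F}_{n}/\!\sim$, hence a Boolean valuation.

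There is no real obstacle here; the proof is essentially a bookkeeping exercise that transfers the three clauses already proved for $\pi$ (equivalence-invariance, normalization, inclusion-exclusion) through the quotient map. The only conceptual point worth emphasizing is that well-definedness and the valuation identity both rely on the nontrivial clauses 2 and 4 of Theorem \ref{teoprob}, which in turn rest on axioms 1 and 2 of a probability function; so the theorem is really a compact restatement of Theorem \ref{teoprob} in the algebraic language of Section \ref{parboolval}.
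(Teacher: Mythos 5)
Your proof is correct and follows essentially the same route as the paper's: well-definedness via point 2 of Theorem \ref{teoprob}, the boundary values via axiom 1 and point 1 of Theorem \ref{teoprob}, and the identity (\ref{1}) via point 4. The only cosmetic difference is that you check the boundary conditions at the canonical representatives $0$ and $1$ rather than at an arbitrary $\alpha$ with $|\alpha|=0$ or $|\alpha|=1$, which is legitimate once well-definedness is in place.
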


\begin{proof}
Firstly, we verify that $\pi ^{\ast }$ is well-defined on the equivalence
classes. In fact, if $|\alpha |=|\beta |$ then $\alpha \sim \beta $ and then 
$\pi (\alpha )=\pi (\beta )$, by point 2) of theorem \ref{teoprob}, and so $%
\pi ^{\ast }(|\alpha |)=\pi ^{\ast }(|\beta |)$. Now we have only to show
that $p^{\ast }$ is a Boolean valuation. If $|\alpha |=1$ then $|\alpha |=1$
in the Lindenbaum algebra and so $\models \alpha $ and \ then $\pi (\alpha
)=1$, because $\pi $ is a probability function. Then $\pi ^{\ast }(|\alpha
|)=1$ by definition of $\pi ^{\ast }$. If $|\alpha |=0$ then $|\lnot \alpha
|=1$ and $\pi (\lnot \alpha )=1$ so $\pi (\alpha )=0$ by point 1) of theorem 
\ref{teoprob}. Then $\pi ^{\ast }(|\alpha |)=0$ by definition of $\pi ^{\ast
}$. \ Finally 
\begin{eqnarray*}
\pi ^{\ast }(|\alpha |\vee |\beta |)&=&\pi ^{\ast }(|\alpha \vee \beta |) \\
&=&\pi (\alpha \vee \beta ) \\
&=&\pi (\alpha )+\pi (\beta )-\pi (\alpha \wedge \beta ) \\
&=&\pi ^{\ast }(|\alpha |)+\pi ^{\ast }(|\beta |)-\pi ^{\ast }(\alpha \wedge
\beta ),
\end{eqnarray*}
\noindent where the third line follows from point 4) of theorem \ref{teoprob}.
\end{proof}

\begin{theorem}
\label{teotrad}If $\pi $ is probability function on $L_{n}$, then there is a
probability space $(A,\mathcal{P}(A),p)$, where $A=2^{n}$, and a morphism $%
\varphi :\mathcal{F}_{n}\rightarrow \mathcal{P}(A)$ such that, for all $%
\alpha \in L_{n}$, $\pi (\alpha )=p(\varphi (\alpha ))$.
\end{theorem}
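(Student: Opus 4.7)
The plan is to assemble the target probability space by transporting the Boolean valuation $\pi^{\ast}$ on the Lindenbaum algebra $\mathcal{F}_n/\!\sim$ (given by Theorem~\ref{teo224}) to $\mathcal{P}(2^n)$ along the isomorphism $\psi$ from $\mathcal{F}_n/\!\sim$ to $\mathcal{P}(2^n)$, and to take $\varphi$ to be the meaning function $M$.

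More concretely, I would set $A=2^n$ and define $p:\mathcal{P}(A)\rightarrow [0,1]$ by $p=\pi^{\ast}\circ \psi^{-1}$. Since $\psi$ is a Boolean isomorphism (hence in particular a bounded lattice isomorphism) and $\pi^{\ast}$ is a Boolean valuation by Theorem~\ref{teo224}, Theorem~\ref{teovalind} (applied in the Boolean case, cf.\ point~2 of Theorem~\ref{teo222}) gives that $p$ is a Boolean valuation on $\mathcal{P}(A)$. By Theorem~\ref{teoadd}, $p$ is then additive with $p(A)=1$, so $(A,\mathcal{P}(A),p)$ is a (finite) probability space in the Kolmogorov sense.

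Next, take $\varphi=M:\mathcal{F}_n\rightarrow \mathcal{P}(A)$; by its inductive definition, $M$ is a homomorphism of Boolean algebras, so in particular a bounded lattice morphism, as required. Finally, the identity $\pi(\alpha)=p(\varphi(\alpha))$ is a direct computation using the factorisation of $M$ through the quotient: since $\psi(|\alpha|)=M(\alpha)$ by definition, we have
\begin{equation*}
p(\varphi(\alpha))=p(M(\alpha))=\pi^{\ast}(\psi^{-1}(M(\alpha)))=\pi^{\ast}(|\alpha|)=\pi(\alpha),
\end{equation*}
where the last equality is the definition of $\pi^{\ast}$ in Theorem~\ref{teo224}.

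There is no real obstacle here: all the work has been done in the preceding sections. The only thing to be a little careful about is that the statement only asks for $\varphi$ to be a morphism (of bounded lattices / Boolean algebras, as implicit in the context) and not necessarily an isomorphism, so one need not worry about $\varphi$ being injective; surjectivity of $M$ is anyway already noted in the construction of the Lindenbaum algebra, and it is not needed for the conclusion.
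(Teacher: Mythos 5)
Your proof is correct and follows essentially the same route as the paper's: transport $\pi^{\ast}$ along $\psi^{-1}$ to get $p$, invoke Theorems~\ref{teovalind} and~\ref{teoadd} to see that $(A,\mathcal{P}(A),p)$ is a probability space, and take $\varphi=M$ (the paper writes $\varphi=\psi\circ|x|$, which is the same map, as it itself remarks afterwards). The concluding computation $p(\varphi(\alpha))=\pi^{\ast}(|\alpha|)=\pi(\alpha)$ is identical to the paper's.
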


\begin{proof}
By the preceding theorem, we can extract from $\pi $ a valuation $\pi ^{\ast
}$ on $\mathcal{F}_{n}/\!\sim $. We set $A=2^{n}$ and define, for all $%
X\subseteq A$, 
\begin{equation*}
p(X)=\pi ^{\ast }(\psi ^{-1}(X)),
\end{equation*}
where $\psi $ is the isomorphism from $\mathcal{F}_{n}/\!\sim $ to $\mathcal{%
P}(A)$ defined by $\psi (|\alpha |)=M(\alpha )$. 
So $p$, by theorem \ref{teovalind}, is the valuation induced on $\mathcal{P%
}(A)$ by the valued Boolean algebra $(\mathcal{F}_{n}/\!\sim ,\pi ^{\ast })$
and by the isomorphism $\psi $. As $p$ is a Boolean valuation, it satisfies
Kolmogorov's axioms \ by theorem \ref{teoadd} and then $(A,\mathcal{P}(A),p)$
is a probability space. Now we set $\varphi =\psi \circ |x|$ and it can be
immediately seen that $\varphi $ is a morphism, resulting from the
composition of two morphisms, and $\pi (\alpha )=p(\varphi (\alpha ))$.
Finally, we have 
\begin{equation*}
p(\varphi (\alpha ))=p(\psi (|\alpha |))=\pi ^{\ast }(\psi ^{-1}(\psi
(|\alpha |)))=\pi ^{\ast }(|\alpha |),
\end{equation*}
where $\pi (\alpha )=\pi ^{\ast }(|\alpha |)$ by the preceding theorem
\end{proof}

We remark that, in the above theorem, $\varphi$ is $M$, the function 
taking every formula to its meaning. The following diagram shows the
passage from $\pi $ to $p$.

\begin{center}
\setlength{\unitlength}{1mm} 
\begin{picture}(75,65)

\put(71,6){\vector(-1,0){63}}
\put(40,61){\vector(3,-4){39}}
\put(40,61){\vector(-3,-4){39}}

\put(40,61){\vector(0,-1){25}}
\put(42,31){\vector(3,-2){32}}
\put(38,31){\vector(-3,-2){32}}

\put(35,33){$\mathcal{F}_{n}/\sim$}
\put(73,6){$\mathcal{P}(2^{n})$}
\put(37,63){$\mathcal{F}_{n}$}
\put(0,6){$[0,1]$}

\put(42,46){$|x|$}
\put(55,33){$\varphi$}
\put(50,18){$\psi$}
\put(39,0){$p$}
\put(25,18){$\pi^\ast$}
\put(14,33){$\pi$}

\end{picture}
\end{center}

As a consequence of this result we can derive theorems about probability
over sentences from theorems about probability ever sets. Consider for
instance the following theorem about $\pi $: $\alpha \models \beta $ implies 
$\pi (\alpha )\leq \pi (\beta )$. If we take the probability space
associated to $\pi $, where the algebra of events is the algebra of meanings
of formulas, we have that $\alpha \models \beta $ implies $M(\alpha
)\subseteq M(\beta )$, but we know that in every probability space $%
X\subseteq Y$ implies $p(X)\subseteq p(Y)$, so we have $p(M(\alpha )\leq
p(M(\beta ))$. If $p$, $\pi ^{\ast }$ and $\psi $ are as in the diagram
above, and remembering that $\varphi $ is the `meaning' function $M$, we
have 
\begin{equation*}
p(M(\alpha ))=\pi ^{\ast }(\psi ^{-1}(M(\alpha )))=\pi ^{\ast }(|\alpha
|)=\pi (\alpha ).
\end{equation*}
In the same way we get $p(M(\beta )=\pi (\beta )$, so we can conclude with $%
\pi (\alpha )\leq \pi (\beta )$.

Now we face the problem of translating a probability measure over sets into
a probability function over formulas. The idea behind this translation is 
that every event $X$ can be seen as the meaning $M(\alpha )$ of a 
formula $\alpha $, so we could define a probability function $\pi$ 
and a translation  $\tau :P(A)\rightarrow F$ setting $\tau(X)=\alpha$, in 
order to obtain, for every event $X \subseteq A$,  $p(X)=\pi (\tau (X))$
The problem with this choice of $\tau$
is that it is not univocal: if $\xi$ is any formula logically equivalent to $\alpha$, then
$M(\alpha)=M(\xi)$ and we could as well set $\tau(X)=\xi$. So the best we can do is to look 
for a probability function $\pi$ and a function from events 
to set of formulas, i.e. a $\overline{\tau }%
:P(A)\rightarrow P(F)$ such that:
\begin{equation*}
\text{for all }X\subseteq A\text{, for all }\alpha \in \overline{\tau }(X)%
\text{, }p(X)=\pi (\alpha ),
\end{equation*}
where formulas in $\overline{\tau }(X)$ are all logically equivalent.

We suppose that $(A,\mathcal{P}(A),p)$ is a probability space and we start
considering the case where the sample space $A$ is $2^{n}$. We know that
there is an isomorphism $\psi $ from $\mathcal{F}_{n}/\!\sim $\ to $\mathcal{%
P}(2^{n})$, given by $\psi (|\alpha |)=M(\alpha )$, and then we define $%
\overline{\varphi }=\psi ^{-1}$. As every $X\subseteq 2^{n}$ is $M(\alpha )$
for some $\alpha $, $\overline{\varphi }$ is a function from $P(A)$ to $P(F)$%
. Now we define the probability function $\pi $: \ by theorem \ref{teovalind}%
, we know that a valuation $v=p\cdot \psi $\ is induced on $\mathcal{F}%
_{n}/\!\sim $ by $(\mathcal{P}(2^{n}),p)$ and $\psi ^{-1}$. The translation
is completed by proving that a probability function can be recovered from a
Boolean valuation on a Lindenbaum algebra.

\begin{theorem}
\label{teo225}If $v$ is a Boolean valuation on $\mathcal{F}_{n}/\sim $, then
the function $\pi $ from $F_{n}$ to$[0,1]$ defined by $\pi (\alpha
)=v(|\alpha |)$ is a probability function on $L_{n}$.
\end{theorem}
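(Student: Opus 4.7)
The plan is to verify directly that $\pi(\alpha)=v(|\alpha|)$ satisfies the two defining axioms of a probability function, using only the fact that $v$ is a Boolean valuation on $\mathcal{F}_{n}/\!\sim$ together with the translation between logical notions on $F_{n}$ and algebraic notions on $\mathcal{F}_{n}/\!\sim$ provided by the Lindenbaum construction. The key identities to invoke are $|\alpha\vee\beta|=|\alpha|\vee|\beta|$ and $|\alpha\wedge\beta|=|\alpha|\wedge|\beta|$ (since $|x|$ is a homomorphism from $\mathcal{F}_{n}$ to $\mathcal{F}_{n}/\!\sim$), and the facts that $\models\alpha$ iff $|\alpha|=1$ and $\models\lnot\alpha$ iff $|\alpha|=0$ in the Lindenbaum algebra.

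First I would check axiom 1: assume $\models\alpha$. Then $|\alpha|=1^{\mathcal{F}_{n}/\sim}$, and because $v$ is a Boolean valuation we have $v(1)=1$, so $\pi(\alpha)=v(|\alpha|)=1$.

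Next I would check axiom 2: assume $\models\lnot(\alpha\wedge\beta)$. Then $|\alpha\wedge\beta|=0$, i.e. $|\alpha|\wedge|\beta|=0$ in the Lindenbaum algebra. Since $v$ is a Boolean valuation, by Theorem \ref{teoadd} it is additive, so $v(|\alpha|\vee|\beta|)=v(|\alpha|)+v(|\beta|)$. Using that $|x|$ is a homomorphism,
\begin{equation*}
\pi(\alpha\vee\beta)=v(|\alpha\vee\beta|)=v(|\alpha|\vee|\beta|)=v(|\alpha|)+v(|\beta|)=\pi(\alpha)+\pi(\beta).
\end{equation*}

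Finally, $\pi$ takes values in $[0,1]$ because $v$ does (as noted after Theorem \ref{teoboolval}, every Boolean valuation is isotone and thus has image in $[0,1]$). There is no real obstacle here; the only subtlety is making sure to cite the homomorphism property of the canonical map $\alpha\mapsto|\alpha|$ so that the logical connectives on $F_{n}$ match the lattice operations on $\mathcal{F}_{n}/\!\sim$, which is precisely what legitimises passing from $\models\lnot(\alpha\wedge\beta)$ to $|\alpha|\wedge|\beta|=0$ and back from $|\alpha|\vee|\beta|$ to $|\alpha\vee\beta|$.
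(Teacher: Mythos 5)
Your proof is correct and follows essentially the same route as the paper's own argument: both verify the two axioms by passing through the Lindenbaum algebra, using $|\alpha|=1$ for axiom 1 and $|\alpha|\wedge|\beta|=0$ together with the additivity of $v$ (Theorem \ref{teoadd}) for axiom 2, with the boundedness of $v$ giving $\pi[F_n]\subseteq[0,1]$. Your explicit remark that the canonical map $\alpha\mapsto|\alpha|$ is a homomorphism is a welcome clarification of a step the paper leaves implicit.
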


\begin{proof}
Firstly we observe that $0\leq \pi (\alpha )\leq 1$, as $v $ is a bounded
lattices valuation. Then we verify the first axiom: suppose that $\models
\alpha $, then $\models \alpha \leftrightarrow \delta $ and so $|\alpha
|=|\delta |=1$ in $\mathcal{F}_{n}/\sim $. As $v $ is a valuation, $v
(|\alpha |)=1$ and so, by definition of $\pi $, $\pi (\alpha )=1$. Finally
we verify the second axiom: suppose that $\models \lnot (\alpha \wedge \beta
)$, then in $\mathcal{F}_{n}/\sim $ we have $\lnot |\alpha \wedge \beta |=1$
and $|\alpha |\wedge |\beta |=0$. Then 
\begin{equation*}
\pi (\alpha \vee \beta )=v(|\alpha \vee \beta |) 
=v(|\alpha |\vee |\beta |)
=v(|\alpha |)+v(|\beta |) 
=\pi (\alpha )+\pi (\beta ),
\end{equation*}
where the third line follows because $v $, as a Boolean valuation, is
additive by theorem \ref{teoadd}.
\end{proof}

So we can conclude that, given a probability space $(A,\mathcal{P}(A),p)$
where $A=2^{n}$, there is a probability function $\pi $ on $L_{n}$ and a
function $\overline{\tau }:P(A)\rightarrow P(F)$ such that, for all $%
\alpha \in \overline{\tau }(X)$, $p(X)=\pi (\alpha )$ holds. We only set $%
\overline{\tau }=\psi ^{-1}$ and apply the preceding theorem setting $\pi
(\alpha )=v(|\alpha |)$, where $v$ is the valuation induced on $\mathcal{F}%
_{n}/\!\!\sim $ by $\psi ^{-1}$ and the valued Boolean algebra $(\mathcal{P}%
(A),p)$.

This argument is grounded on the assumption $A=2^{n}$, that gives the
isomorphism between $\mathcal{P}(A)$ and $\mathcal{F}_{n}/\sim $. The
general case of an algebra of events $\mathcal{P}(A)$, where $A$ is any
finite set, follows by a slightly different argument. We start observing
that we can identify such algebras with the Boolean algebras $\mathbf{2}^{n}$%
, as $n$ varies on the natural numbers. As $\mathcal{F}_{n}/\sim $ is
isomorphic to $\mathcal{P}(2^{n})$ and then isomorphic to $\mathbf{2}%
^{2^{n}} $, the finite Lindenbaum algebras are exactly the algebras of kind $%
\mathbf{2}^{n}$ where $n$ is a power of $2$. Before proving the translatability of
probability on sets in terms of probability on sentences, we need the
following lemma about the existence of epimorphisms between finite Boolean
algebras.

\begin{theorem}
For every $n\leq k$,

\begin{enumerate}
\item  there is an epimorphism $\theta :\mathbf{2}^{k}\rightarrow \mathbf{2}%
^{n}$,

\item  there is a sequence $s\in 2^{k}$ such that $[0,s]$ is isomorphic to $%
\mathbf{2}^{n}$.
\end{enumerate}
\end{theorem}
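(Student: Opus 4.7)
My plan is to exhibit explicit constructions in both parts, identifying $\mathbf{2}^k$ with the algebra of bit-sequences of length $k$ under componentwise operations.

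For part 1, I would define $\theta:\mathbf{2}^k\to\mathbf{2}^n$ as the projection on the first $n$ coordinates, that is, $\theta(x_1,\dots,x_k)=(x_1,\dots,x_n)$. Since meet, join, complement, top and bottom in $\mathbf{2}^k$ and $\mathbf{2}^n$ are all computed componentwise, $\theta$ preserves them and is therefore a bounded lattice morphism (hence a Boolean morphism, as noted in the proof of Theorem \ref{teo222}). Surjectivity is immediate: given any $(y_1,\dots,y_n)\in\mathbf{2}^n$, the sequence $(y_1,\dots,y_n,0,\dots,0)\in\mathbf{2}^k$ is a preimage.

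For part 2, I would take $s=(1,\dots,1,0,\dots,0)\in 2^k$ with the first $n$ entries equal to $1$ and the remaining $k-n$ entries equal to $0$. The elements of $[0,s]$ are exactly the sequences whose last $k-n$ coordinates vanish, so there are $2^n$ of them. Define $\rho:[0,s]\to\mathbf{2}^n$ by $\rho(x_1,\dots,x_k)=(x_1,\dots,x_n)$. It is clearly a bijection; it sends $0^{[0,s]}=(0,\dots,0)$ to the bottom of $\mathbf{2}^n$, sends $1^{[0,s]}=s$ to the top of $\mathbf{2}^n$, and preserves meet and join because these are inherited componentwise from $\mathbf{2}^k$. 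For the complement, recall from the discussion preceding the theorem that $\lnot^{[0,s]}(x)=s\wedge\lnot^{\mathbf{2}^k}(x)$, which componentwise yields $(1-x_1,\dots,1-x_n,0,\dots,0)$; applying $\rho$ gives $(1-x_1,\dots,1-x_n)$, the complement in $\mathbf{2}^n$. Hence $\rho$ is a Boolean isomorphism.

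There is no real obstacle here: both parts reduce to choosing coordinates and checking that the componentwise Boolean structure is preserved. The only point that deserves a moment of care is using the correct definition of complement in a relativized interval $[0,s]$, which is exactly the one recalled in the paragraph preceding the theorem.
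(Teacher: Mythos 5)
Your proof is correct and follows essentially the same route as the paper: projection onto the first $n$ coordinates for the epimorphism, and $s=1^{n}\ast 0^{k-n}$ with the same projection restricted to $[0,s]$ for the isomorphism. You merely spell out in more detail (including the relativized complement $s\wedge\lnot x$) what the paper dismisses as immediate.
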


\begin{proof}
1. For every $s\in 2^{k}$, we define $\theta (s)=s\upharpoonright
n=(s_{0},...,s_{n-1})$. $\theta $ is clearly onto $\mathbf{2}^{n}$. If we
denote with $0^{m}$ a $m$-termed sequence of \ $0$, we have that, for all $%
q\in 2^{n}$, the sequence $q\ast 0^{k-n}$ belongs to $2^{k}$ and $\theta
(q\ast 0^{k-n})=q$. We prove that $\theta $ is a Boolean morphism: 
\begin{eqnarray*}
\theta (\lnot s)=(\lnot s)\upharpoonright n=\lnot (s\upharpoonright n)=\lnot
\theta (s), \\
\theta (s\wedge q)=(s\wedge q)\upharpoonright n=(s\upharpoonright n)\wedge
(q\upharpoonright n)=\theta (s)\wedge \theta (q).
\end{eqnarray*}
The preservation of $\vee $, $0$ and $1$ is an easy consequence.

2. Let $s=1^{n}\ast 0^{k-n}$: one sees immediately that $\theta $ is a
bijection between $[0,s]$ and $2^{n}$.
\end{proof}

Now we are ready for the translation of a generic (finite) probability space.

\begin{theorem}
Let $(A,\mathcal{P}(A),p)$ be a probability space where $A=\{a_{1},...,a_{n}%
\}$ and let $k=\min x(n\leq 2^{x})$. There is a function $\overline{\tau }
$:$P(A)\rightarrow P(F)$ and a probability function $\pi $ on $L_{k}$ such
that, for all event $X\subseteq A$ and all formula $\alpha \in \overline{%
\tau }(X)$, $p(X)=\pi (\alpha )$ holds.
\end{theorem}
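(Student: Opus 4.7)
The approach is to reduce the general case to Theorem~\ref{teo225} by transporting $p$ back to a Boolean valuation on the Lindenbaum algebra $\mathcal{F}_k/\!\sim$, exactly as in the special case $A=2^n$ treated earlier, with one extra layer supplied by the preceding lemma.

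First, fix an enumeration of $A$ to get the canonical Boolean isomorphism $\eta:\mathcal{P}(A)\to\mathbf{2}^n$, and recall the isomorphisms $\psi:\mathcal{F}_k/\!\sim\;\to\mathcal{P}(2^k)$ from theorem~\ref{teo224} and the identification $\mathcal{P}(2^k)\cong\mathbf{2}^{2^k}$ by characteristic functions. By the choice $k=\min x(n\leq 2^x)$ we have $n\leq 2^k$, so part~1 of the preceding lemma, applied with $2^k$ playing the role of $k$ and $n$ the role of $n$, yields a Boolean epimorphism $\theta:\mathbf{2}^{2^k}\to\mathbf{2}^n$. Composing, we obtain a Boolean epimorphism
\[
\rho \;=\; \eta^{-1}\circ\theta\circ\psi \;:\; \mathcal{F}_k/\!\sim\;\longrightarrow\;\mathcal{P}(A).
\]

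Second, pull $p$ back along $\rho$. Since $p$ is a probability measure on $\mathcal{P}(A)$, it is a Boolean valuation there by theorem~\ref{teoadd}, and since $\rho$ is in particular a morphism of bounded lattices, theorem~\ref{teo222} guarantees that $v=p\circ\rho$ is a Boolean valuation on $\mathcal{F}_k/\!\sim$. Theorem~\ref{teo225} then delivers a probability function on $L_k$, namely $\pi(\alpha)=v(|\alpha|)=p(\rho(|\alpha|))$.

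Third, define the translation $\overline{\tau}:P(A)\to P(F_k)$ by
\[
\overline{\tau}(X) \;=\; \{\alpha\in F_k : \rho(|\alpha|)=X\}.
\]
Surjectivity of $\rho$ ensures $\overline{\tau}(X)\neq\emptyset$ for every $X\subseteq A$, and for any $\alpha\in\overline{\tau}(X)$ the chain of equalities $\pi(\alpha)=v(|\alpha|)=p(\rho(|\alpha|))=p(X)$ is immediate. The entire argument is essentially a diagram chase combining the lemma with theorems~\ref{teo222} and~\ref{teo225}; the only mildly delicate point is the bookkeeping between the several canonical copies ($\mathcal{P}(A)$, $\mathbf{2}^n$, $\mathcal{P}(2^k)$, $\mathbf{2}^{2^k}$, $\mathcal{F}_k/\!\sim$) of the finite Boolean algebras involved. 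Unlike the special case $n=2^k$ treated before, the formulas in $\overline{\tau}(X)$ are generally spread over several $\sim$-classes, because $\rho$ ceases to be injective when $n<2^k$; this is consistent with the statement, which only asks that every formula in $\overline{\tau}(X)$ have probability $p(X)$.
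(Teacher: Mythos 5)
Your proposal is correct and follows essentially the same route as the paper: compose the isomorphism $\mathcal{F}_k/\!\sim\;\to\mathbf{2}^{2^k}$ with the epimorphism $\mathbf{2}^{2^k}\to\mathbf{2}^n$ from the preceding lemma and the isomorphism $\mathbf{2}^n\to\mathcal{P}(A)$, pull $p$ back along this composite to get a Boolean valuation via Theorem~\ref{teo222}, and apply Theorem~\ref{teo225}; your $\rho$ is exactly the paper's $\eta=\chi\circ\theta\circ\psi$ and your $\overline{\tau}$ is the same fiber construction. The remark that surjectivity of $\rho$ makes $\overline{\tau}(X)$ nonempty is a small but worthwhile addition the paper leaves implicit.
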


\begin{proof}
We define three morphisms as follows.

\begin{enumerate}
\item  As $A=\{a_{1},...,a_{n}\}$, the algebra of events $\mathcal{P}(A)$ is
isomorphic to $\mathbf{2}^{n}$, so there is an isomorphism $\chi :\mathbf{2}%
^{n}\rightarrow \mathcal{P}(A)$.

\item  In general $\mathbf{2}^{n}$ is not isomorphic to a Lindenbaum
algebra, so we let $k=\min x(n\leq 2^{x})$ in order to have 
\begin{equation*}
\mathbf{2}^{2^{k}}\simeq P(\mathbf{2}^{k})\simeq \mathcal{F}_{k}/\!\sim .
\end{equation*}
\ By point 1) of the above theorem, there is a morphism $\theta :\mathbf{2}%
^{2^{k}}\rightarrow \mathbf{2}^{n}$.

\item  We know that there is an isomorphism $\psi $ from $\mathcal{F}%
_{k}/\!\sim $ to$\ \mathbf{2}^{2^{k}}$, defined by $\psi (|\alpha
|)=M(\alpha )$. Then we define the morphism $\eta =\chi \circ \theta \circ
\psi $ from $\mathcal{F}_{k}/\sim $ to $\mathcal{P}(A)$, as in the following
diagram.
\end{enumerate}

\begin{center}
\setlength{\unitlength}{1mm} 
\begin{picture}(65,35)

\put(10,26){\vector(1,0){14}}
\put(28,26){\vector(1,0){14}}
\put(45,26){\vector(1,0){14}}

\put(5,24){\vector(0,-1){20}}

\put(58,25){\vector(-2,-1){48}}

\put(0,26){$\mathcal{F}_k/\sim$}
\put(25,26){$2^{2^{k}}$}
\put(43,26){$2^{n}$}
\put(60,26){$\mathcal{P}(A)$}
\put(2,0){$[0,1]$}

\put(15,28){$\psi$}
\put(33,28){$\theta$}
\put(50,28){$\chi$}

\put(7,14){$v$}
\put(36,11){$p$}

\end{picture}
\end{center}
Now we can define $\overline{\tau }:P(A)\rightarrow P(F)$ setting 
\begin{equation*}
\overline{\varphi }(X)=\bigcup \{|\xi |\in \mathcal{F}_{k}/\!\sim :\eta
(|\xi |)=X\}.
\end{equation*}
As for the probability function, we observe that, by theorem \ref{teo222}, $%
v=p\circ \eta $ is a bounded lattices valuation and then a Boolean valuation
on $\mathcal{F}_{k}/\!\sim $. By theorem \ref{teo225}, we obtain from $v $
a probability function $\pi $ on $L_{k}$ such that $\pi (\alpha )=v(|\alpha
|)$. The functions $\pi $ and $\overline{\tau }$ are as required by the
theorem, because for all $X\subseteq A$ and all $\alpha \in \overline{%
\tau }(X)$, 
\begin{equation*}
\pi (\alpha )=v(|\alpha |)=p(\eta (|\alpha |))=p(X),
\end{equation*}
where the last equation follows from $\alpha \in \overline{\tau }(X)$.
\end{proof}

\section{Partial probability spaces\label{parprobpar}}

If we understand probability as the measure of an event-set, then an
essential role is played by Boolean algebras and Boolean valuations. We
start introducing the concept of partial event and a measure of
probability on partial events, then we show that the algebraic counterparts
of these concepts are given by DMF-algebras and partial valuations on
DMF-algebras.

Partial events have been introduced elsewhere (see \cite{negri2010}). In order to
make this work self-contained, we recall the definitions of the main
concepts.

Given set $S$, that can be conceived as the sample space of an experiment,
we define the set of all \textit{partial sets on} $S$ as the set $%
D(S)=\{(A,B):A,B\subseteq S,$ and $A\cap B=\emptyset \}$. \ The elements of $%
A$ ($B$) are the \textit{positive (negative) elements} of the partial set $%
(A,B)$. Partial sets can be given an algebraic structure with the following
operations: 
\begin{eqnarray*}
(A,B)\sqcap (C,D) &=&(A\cap C,B\cup D), \\
(A,B)\sqcup (C,D) &=&(A\cup C,B\cap D), \\
-(A,B) &=&(B,A), \\
0 &=&(\emptyset ,S), \\
1 &=&(S,\emptyset ), \\
n &=&(\emptyset ,\emptyset ).
\end{eqnarray*}
We define a binary relation between partial sets setting 
\begin{equation*}
(A,B)\sqsubseteq (C,D)\text{ iff }A\subseteq C\text{ and }D\subseteq B\text{.%
}
\end{equation*}

\noindent It can be easily proved that $(D(S),\sqsubseteq )$ is a partially
ordered set having a maximum $(S,\emptyset )$ and a minimum $(\emptyset ,S)$.

\noindent The algebra $\mathcal{D}(S)=(D(S),\sqcap ,\sqcup ,-,0,n,1)$ is the 
\textit{algebra of all partial sets} on $S$ or the \textit{algebra of
partial events} on $S$. A\textit{\ field of partial set} is any subalgebra
of $\mathcal{D}(S)$. A partial set $(A,B)$ is a \textit{Boolean partial set}
if $A=S-B$. The set of Boolean partial sets\ is a Boolean algebra isomorphic
to $\mathcal{P}(S)$, the classical power-set algebra.

When $S$ is a sample space, we say that $\mathcal{D}(S)$ is the \textit{%
algebra of partial events} on $S$. Every positive (negative) element of $%
(A,B)$ is a favorable (unfavorable) case of the event. In relation with the
experimental result $s\in S$, we say that $(A,B)$ occurs positively if $s\in
A$, occurs negatively if $s\in B$, is indeterminate otherwise. Events of
classical probability theory are to be identified with Boolean partial
sets.

The probability value of a partial event $(A,B)$ is a pair $(x,y)$ belonging
to the set $T$ of \textit{partial probability values}, defined by $%
T=\{(x,y)\in \lbrack 0,1]^{2}:$ $x+y\leq 1\}$. We define the relation $%
\preceq $ on $T$ setting $(x,y)\preceq (w,z)$ iff $x\leq w$ e $z\leq y$. It
can be easily shown that $(T\preceq )$ is a poset with a maximum $(1,0)$ and
a minimum $(0,1)$.

Given a partial field of set $\mathcal{G}_{S}$ on $S$ and a function $\mu
:G_{S}\rightarrow T$, we say that $\mu $ is a \textit{measure of partial
probability} \ when the following axioms are satisfied:

\begin{enumerate}
\item  $\mu (S,\emptyset )=(1,0)$.

\item  $\mu (A,B)+\mu (C,D)=\mu ((A,B)\sqcup (C,D))-\mu ((A,B)\sqcap (C,D)).$

\item  $\mu (-(A,B))=\sigma (\mu (A,B))$,

\item  $(0,0)\preceq \mu (A,\emptyset )$, for all $(A,\emptyset )\in \nabla $%
,
\end{enumerate}

\noindent where $\sigma :[0,1]^{2}\rightarrow \lbrack 0,1]^{2}$ is defined
by $\sigma (x,y)=(y,x)$. (The definition in par. 3 of \cite{negri2010} slightly
differs in axiom 2.) A \textit{partial probability space} is a triple $(S,%
\mathcal{G}_{S},\mu )$ where $\mathcal{G}_{S}$ is a field of partial sets on
the sample space $S$ and $\mu $ is a measure of partial probability.

A partial probability space can be easily obtained from every classical
probability space $(S,\mathit{P}(S),p)$ as follows: we define the \textit{%
partial probability space associated to} $(S,\mathit{P}(S),p)$ as $(S,%
\mathcal{D}(S),\mu )$, where $\mu :D(S)\rightarrow T$ is defined by 
\begin{equation*}
\mu (A,B)=(p(A),p(B)).
\end{equation*}
As $(A,B)$ is a partial event, $A\cap B=\emptyset $ so $p(A)+p(B)=p(A\cup B)$%
: this proves that $\mu (A,B)\in T$. It can be easily proved that $\mu $
satisfies the four \ axioms above, so $(S,\mathcal{D}(S),\mu )$ is a partial
probability space.

Instead of proving some properties of partial probability spaces, we turn to
the algebraic counterparts of these concepts in order to prove similar
results in a much more general setting.

\section{DMF-algebras}

We introduce DMF-algebras as the algebraic counterparts of partial sets. A
De Morgan algebra (DM-algebra) is an algebra of type $\mathcal{L}%
_{DM}=\{\wedge ,\vee ,\lnot ,0,1\}$ that satisfies, besides the axioms of
bounded distributive lattices, the following axioms: 
\begin{eqnarray*}
\lnot \lnot x &=&x\text{ (double negation)}, \\
\lnot (x\wedge y) &=&\lnot x\vee \lnot y\text{ (de Morgan law).}
\end{eqnarray*}
(The other de Morgan law easily follows.) We remember that in every
DM-algebra $\lnot 0=1$ and $\lnot 1=0$. A DMF-algebra is a DM-algebra with a
single fixed point for negation. The existence of a single fixed point can
be obtained equationally, at the price of extending the type $\mathcal{L}$
with the constant $n$, by the following axioms: 
\begin{eqnarray*}
x\wedge \lnot x &\leq &y\vee \lnot y\text{ (normality)} \\
\lnot n &=&n\text{ (fixed point)}
\end{eqnarray*}

We denote with $DMF$ the class of all DMF-algebras. In every DMF-algebra $%
\mathcal{A}$ we denote with $\nabla ^{\mathcal{A}}$ the 
set of all elements of type $x\vee \lnot x$. We
simply write $\nabla $ when no confusion is possible. One can easily see
that $\nabla =[n,1]$: on one side we have $n=\lnot n\wedge n\leq x\vee \lnot
x$ for all $x$, on the other side, if $n\leq x\leq 1$ then $0\leq \lnot
x\leq n$ and then $x=x\vee \lnot x.$ We can dually define $\Delta $ as the
set of all elements of type $x\wedge \lnot x$, and prove that $\Delta =[0,n]$%
. We sometimes abbreviate $x\vee \lnot x$ with $\nabla (x)$ and $x\wedge
\lnot x$ with $\Delta (x)$.

We denote with $K$ the set of all complemented elements of a DMF-algebra $%
\mathcal{A}$. Members of $K$ are also said \textit{\ Boolean elements}. If
the complement of $a$ exists, we denote it by $a^{\ast }$.

\begin{theorem}
\label{teoBA}If $\mathcal{A}$ is a DMF-algebra,

\begin{enumerate}
\item  $K$ is a Boolean algebra,

\item  $K=\{x:x\vee \lnot x=1\}$,

\item  for all $a\in K$, $\lnot a=a^{\ast }$.
\end{enumerate}
\end{theorem}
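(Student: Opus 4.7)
The plan is to establish parts 2 and 3 first, then derive part 1 as a consequence. The inclusion $\{x : x \vee \lnot x = 1\} \subseteq K$ in part 2 is immediate: if $x \vee \lnot x = 1$, applying $\lnot$ together with the De Morgan law yields $x \wedge \lnot x = 0$, so $\lnot x$ witnesses that $x$ is complemented.

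The other inclusion of part 2, together with part 3, is the substantive step. Suppose $x \in K$ with complement $a$, so $x \vee a = 1$ and $x \wedge a = 0$; applying $\lnot$ gives $\lnot x \vee \lnot a = 1$. I would then write
\[ x = x \wedge (\lnot x \vee \lnot a) = (x \wedge \lnot x) \vee (x \wedge \lnot a). \]
By normality, $x \wedge \lnot x \leq a \vee \lnot a$, so $x \wedge \lnot x$ equals $(x \wedge \lnot x) \wedge (a \vee \lnot a)$, and distributing while using $x \wedge a = 0$ collapses this to $x \wedge \lnot x \wedge \lnot a \leq x \wedge \lnot a$. Substituting back gives $x = x \wedge \lnot a$, i.e.\ $x \leq \lnot a$, equivalently $a \leq \lnot x$. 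Hence $1 = x \vee a \leq x \vee \lnot x$, which is part 2. Uniqueness of complements in a distributive lattice then forces $\lnot x = a = a^{\ast}$, which is part 3.

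Part 1 follows quickly. Part 3 makes closure of $K$ under $\lnot$ immediate (since $\lnot \lnot x = x$). Closure under $\wedge$ is verified using the characterization in part 2: for $a, b \in K$, expand $(a \wedge b) \vee \lnot (a \wedge b) = (a \wedge b) \vee \lnot a \vee \lnot b$, and distributivity together with $a \vee \lnot a = b \vee \lnot b = 1$ collapses the right-hand side to $1$; closure under $\vee$ is dual. With $0, 1 \in K$ trivially, $K$ is a bounded distributive sublattice in which every element is complemented, hence a Boolean algebra.

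The main obstacle is the nontrivial direction of part 2: one must deduce $\lnot x \geq a$ from the bare existence of some lattice complement $a$. Normality is precisely the lever that ties the otherwise free De Morgan negation to genuine lattice complementation and makes this step go through; without it, nothing forces the lattice-theoretic complement of a Boolean element to coincide with its De Morgan negation.
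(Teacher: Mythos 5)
Your proof is correct and rests on the same key idea as the paper's: normality is the lever that forces $x\vee\lnot x=1$ for a complemented $x$, after which distributivity (uniqueness of complements in a distributive lattice) identifies $\lnot x$ with $x^{\ast}$. The only difference is organizational — you establish parts 2 and 3 first and then get closure of $K$ under $\wedge$ and $\vee$ from the characterization $x\vee\lnot x=1$, whereas the paper verifies the complement identities for $a\wedge b$ and $a\vee b$ directly and reaches part 2 via the identity $(a\vee\lnot a)\vee(a^{\ast}\wedge\lnot a^{\ast})=1$; both reductions are routine distributivity computations.
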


\begin{proof}
1. For all $a$, $b\in K$ we have $(a\wedge b)^{\ast }=(a^{\ast }\vee b^{\ast
})$, because $(a\wedge b)\vee (a^{\ast }\vee b^{\ast })=1$ and $(a\wedge
b)\wedge (a^{\ast }\vee b^{\ast })=0$. So $a\wedge b$ is complemented and
belongs to $K$. In the same way we prove that $a\vee b$ is complemented and
belongs to $K$, thus $K$ is closed with respect to $\wedge $ and $\vee $.
Obviously $0$ and $1$ are in $K$, so $K$ is a complemented distributive
bounded lattice.

2. Firstly we show that, for all$\ a\in K$, $\lnot (a^{\ast })=(\lnot
a)^{\ast }$. From $a\vee a^{\ast }=1$ and $a\wedge a^{\ast }=0$ we have $%
\lnot a\wedge \lnot (a^{\ast })=0$ and $\lnot $ $a\vee \lnot (a^{\ast })=1$,
thus showing that $\lnot (a^{\ast })$ is the complement of $\lnot a$. \ Now
we can prove that, if $a\in K$ then 
\begin{equation*}
1=(a\vee \lnot a)\vee (a^{\ast }\wedge \lnot (a^{\ast }))=a\vee \lnot a,
\end{equation*}
where the last equation follows by the normality axiom. In the other
direction, it is immediate to show that if $a\vee \lnot a=1$ then $a$ is
complemented and belongs to $K$.

3. For all $a\in K$ we have both $a\vee a^{\ast }=1$ by definition and $%
a\vee \lnot a=1$ by point 2, thus $a\vee a^{\ast }=$ $a\vee \lnot a$. In the
same way we get $a\wedge a^{\ast }=$ $a\wedge \lnot a$ and, by
distributivity, $\lnot a=a^{\ast }$.
\end{proof}

The generation of partial sets as disjoint pairs of classical sets is the
leading idea of a general construction of DMF-algebras from bounded
distributive lattices. We denote with $DL_{0,1}$ the class of bounded
distributive lattices. If $\ \mathcal{A\in }DL_{0,1}$ then $\mathcal{A}%
\times \mathcal{A}^{\circ }\in DL_{0,1}$, where $\mathcal{A}^{\circ }$
denotes the dual of $\mathcal{A}$. We define 
\begin{equation*}
\pi (A)=A\times A^{\circ }\upharpoonright \{(a,b):a\wedge b=0\}.
\end{equation*}
Finally we define a structure $\pi (\mathcal{A})$ of type $\mathcal{L}_{DMF}=%
\mathcal{L}_{DM}\cup \{n\}$ setting $\pi (\mathcal{A})=(\pi (A),\wedge ^{\pi
(\mathcal{A})},\vee ^{\pi (\mathcal{A})},\lnot ^{\pi (\mathcal{A})},0^{\pi (%
\mathcal{A})},1^{\mathcal{\pi (A)}},n^{\pi (\mathcal{A})})$ where 
\begin{eqnarray*}
(x,y)\wedge ^{\pi (\mathcal{A})}(z,w) &=&(x\wedge z),(y\wedge ^{\circ
}w)=(x\wedge z),(y\vee w) \\
(x,y)\vee ^{\pi (\mathcal{A})}(z,w) &=&(x\vee z),(y\vee ^{\circ }w)=(x\vee
z),(y\wedge w) \\
\lnot ^{\pi (\mathcal{A})}(x,y) &=&(y,x) \\
0^{\pi (\mathcal{A})} &=&(0,0^{\circ })=(0,1) \\
1^{\pi (\mathcal{A})} &=&(1,1^{\circ })=(1,0) \\
n^{\pi (\mathcal{A})} &=&(0,0).
\end{eqnarray*}

\noindent As in every lattice, we can introduce a partial order in $\pi (%
\mathcal{A})$ setting $(a,b)\leq ^{\pi (\mathcal{A})}(c,d)$ iff $(a,b)\wedge
^{\pi (\mathcal{A})}(c,d)=(a,b)$. By an easy calculation we obtain 
\begin{equation*}
(a,b)\leq ^{\pi (\mathcal{A})}(c,d)\text{ iff }a\leq c\text{ and }d\leq b%
\text{.}
\end{equation*}
The following theorem shows that $\pi (-)$ is an uniform way of constructing
DMF-algebras from bounded distributive lattices.

\begin{theorem}
\label{teopi}If $\mathcal{A}\in DL_{0,1}$ then $\pi (\mathcal{A})\in DMF$.
\end{theorem}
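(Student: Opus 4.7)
The plan is to verify each of the axioms defining $DMF$ for $\pi(\mathcal{A})$, the only nontrivial issues being (a) that the operations are well-defined, i.e.\ closed in $\pi(A)$, and (b) that the normality axiom holds.

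First, I would show that the three operations $\wedge^{\pi(\mathcal{A})}$, $\vee^{\pi(\mathcal{A})}$, $\lnot^{\pi(\mathcal{A})}$ preserve the disjointness condition $a\wedge b=0$, so that $\pi(A)$ is indeed closed. For meet, assuming $(x,y),(z,w)\in\pi(A)$, i.e.\ $x\wedge y=0$ and $z\wedge w=0$, I compute $(x\wedge z)\wedge(y\vee w)=(x\wedge z\wedge y)\vee(x\wedge z\wedge w)=0\vee 0=0$ by distributivity. The argument for join is dual, and for $\lnot$ it is immediate since $(y,x)$ has the same component pair as $(x,y)$. The constants $(0,1)$, $(1,0)$, $(0,0)$ trivially lie in $\pi(A)$. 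Because $\mathcal{A}\times\mathcal{A}^\circ\in DL_{0,1}$ and the operations of $\pi(\mathcal{A})$ are the restrictions of the product operations, $\pi(\mathcal{A})$ inherits the bounded distributive lattice axioms from $\mathcal{A}\times\mathcal{A}^\circ$.

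Next, I would check the De Morgan axioms. Double negation is immediate: $\lnot\lnot(x,y)=\lnot(y,x)=(x,y)$. For De Morgan's law,
\begin{equation*}
\lnot((x,y)\wedge(z,w))=\lnot(x\wedge z,\,y\vee w)=(y\vee w,\,x\wedge z),
\end{equation*}
while $\lnot(x,y)\vee\lnot(z,w)=(y,x)\vee(w,z)=(y\vee w,\,x\wedge z)$, so the two coincide. The fixed point axiom is also immediate: $\lnot n^{\pi(\mathcal{A})}=\lnot(0,0)=(0,0)=n^{\pi(\mathcal{A})}$.

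The main piece of work is the normality axiom. For any $(x,y),(z,w)\in\pi(A)$, I use the disjointness condition on the first pair to collapse the meet: $(x,y)\wedge\lnot(x,y)=(x,y)\wedge(y,x)=(x\wedge y,\,y\vee x)=(0,\,x\vee y)$, and dually $(z,w)\vee\lnot(z,w)=(z\vee w,\,0)$. Using the order $(a,b)\leq^{\pi(\mathcal{A})}(c,d)$ iff $a\leq c$ and $d\leq b$, the comparison $(0,x\vee y)\leq^{\pi(\mathcal{A})}(z\vee w,0)$ reduces to $0\leq z\vee w$ and $0\leq x\vee y$, both trivially true in $\mathcal{A}$. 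Hence normality holds, and I conclude that $\pi(\mathcal{A})\in DMF$.

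I do not expect any real obstacle: the only slightly delicate step is closure under meet and join, which requires a single application of distributivity in $\mathcal{A}$; everything else is bookkeeping on pairs.
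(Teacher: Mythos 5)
Your proof is correct and follows essentially the same route as the paper's: closure of $\pi(A)$ under the operations via one application of distributivity, inheritance of the bounded distributive lattice axioms from $\mathcal{A}\times\mathcal{A}^{\circ}$, direct verification of double negation and De Morgan's law on pairs, and normality reduced via the disjointness conditions to the trivial inequalities $0\leq x\vee y$ and $0\leq z\vee w$. No gaps.
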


\begin{proof}
Firstly we show that $\pi (A)$ is closed with respect to $\wedge ^{\pi (%
\mathcal{A})}$ and $\vee ^{\pi (\mathcal{A})}$. If $(a,b)$ and $(c,d)$ are
in $\pi (A)$, then $a\wedge b=c\wedge d=0$, so
$(a,b)\wedge ^{\pi (\mathcal{A})}(c,d)=(a\wedge c,b\vee d)$
where 
$(a\wedge c)\wedge (b\vee d)=(a\wedge c\wedge b)\vee (a\wedge c\wedge d)=0$.
The same holds for $\vee ^{\pi (\mathcal{A})}$. As $\pi (A)$ is closed with
respect to $0^{\vee ^{\pi (\mathcal{A})}}=(0,1)$ and $1^{\vee ^{\pi (%
\mathcal{A})}}=(1,0)$, we can say that $\pi (\mathcal{A})$, as a subalgebra
of $\mathcal{A}\times \mathcal{A}^{\circ }$, is a bounded distributive
lattice (forgetting $\lnot $ and $n$). It can be easily shown that $\pi (%
\mathcal{A})$ is a DM-algebra: double negation law holds because 
$\lnot ^{\pi (\mathcal{A})}\lnot ^{\pi (\mathcal{A})}(a,b)=(a,b)$
and de Morgan law holds because 
\begin{eqnarray*}
\lnot ^{\pi (\mathcal{A})}((a,b)\wedge ^{\pi (\mathcal{A})}(c,d))&=&\lnot
^{\pi (\mathcal{A})}(a\wedge c,b\vee d) \\
&=&(b\vee d,a\wedge c) \\
&=&(b,d)\vee ^{\pi (\mathcal{A})}(a,c) \\
&=&\lnot ^{\pi (\mathcal{A})}(a,b)\vee ^{\pi (\mathcal{A})}\lnot ^{\pi (%
\mathcal{A})}(c,d).
\end{eqnarray*}
Finally we show that $\pi (\mathcal{A})$ is a DMF-algebra. In the first
place, we observe that $n^{\pi (\mathcal{A})}$ is a fixed point of $\lnot $,
because $\lnot ^{\pi (\mathcal{A})}n^{\pi (\mathcal{A})}=\lnot ^{\pi (%
\mathcal{A})}(0,0)=n^{\pi (\mathcal{A})}$. Then we show normality: 
\begin{equation*}
(a,b)\wedge ^{\pi (\mathcal{A})}\lnot ^{\pi (\mathcal{A})}(a,b)\leq ^{\pi (%
\mathcal{A})}(c,d)\vee ^{\pi (\mathcal{A})}\lnot ^{\pi (\mathcal{A})}(c,d).
\end{equation*}
This amounts to prove $(a,b)\wedge ^{\pi (\mathcal{A})}(b,a)\leq ^{\pi (%
\mathcal{A})}(c,d)\vee ^{\pi (\mathcal{A})}(d,c)$, i.e. $(a\wedge b,b\vee
a)\leq ^{\pi (\mathcal{A})}(c\vee d,d\wedge c)$, what follows immediately
remembering that $a\wedge b=c\wedge d=0$.
\end{proof}

From now on we adopt a less baroque notation and write simply $(a,b)\wedge
(c,d)=(a\wedge c,b\vee d)$ instead of $(a,b)\wedge ^{\pi (\mathcal{A}%
)}(c,d)=(a\wedge c,b\vee d)$, leaving to the reader the task of
distinguishing between $\wedge $ as an operator on pairs and $\wedge $ as an
operator on individuals. The same holds for $\vee $, $\lnot $, $0$, $1$, $n$
and $\leq $.

The following theorem shows that every DMF-algebra $\mathcal{A}$ can be
obtained, through the construction $\pi (-)$, as a subalgebra of $\pi (\nabla
_{\mathcal{A}})$.

\begin{theorem}
\label{teocostrdmf}If $\mathcal{A}\in DMF$ then there is a monomorphism $%
\varphi :\mathcal{A}\rightarrow \pi (\nabla _{\mathcal{A}})$.
\end{theorem}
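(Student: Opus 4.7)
The plan is to define $\varphi\colon \mathcal{A}\to \pi(\nabla_{\mathcal{A}})$ by
\[ \varphi(x) = (x\vee n,\ \neg x\vee n), \]
sending each element to the pair consisting of its ``positive part'' $x\vee n$ and its ``negative part'' $\neg x\vee n$, both of which live in $\nabla_{\mathcal{A}}=[n,1]$. This is the natural guess: on the motivating algebra $\mathcal{D}(S)$ of partial sets one has $(A,B)\vee n=(A,\emptyset)$ and $\neg(A,B)\vee n=(B,\emptyset)$, so the pair encodes $A$ and $B$ separately.

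First I would verify that $\varphi(x)$ actually lies in $\pi(\nabla_{\mathcal{A}})$. Both coordinates are $\geq n$, hence in $\nabla_{\mathcal{A}}$, and the disjointness requirement $a\wedge b=0^{\nabla_{\mathcal{A}}}=n$ is immediate from distributivity and the normality axiom $x\wedge\neg x\leq n$:
\[ (x\vee n)\wedge(\neg x\vee n)=(x\wedge\neg x)\vee n=n. \]

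Next I would check that $\varphi$ preserves the DMF structure. The constants are direct: $\varphi(0)=(n,1)=0^{\pi(\nabla_{\mathcal{A}})}$, $\varphi(1)=(1,n)=1^{\pi(\nabla_{\mathcal{A}})}$, $\varphi(n)=(n,n)=n^{\pi(\nabla_{\mathcal{A}})}$. Negation follows from $\neg\neg x=x$ and the swap definition of $\neg^{\pi}$: $\varphi(\neg x)=(\neg x\vee n,\, x\vee n)=\neg\varphi(x)$. For $\wedge$, distributivity gives $(x\vee n)\wedge(y\vee n)=(x\wedge y)\vee n$, matching the first coordinate of $\varphi(x\wedge y)$, while the de Morgan law handles the second coordinate $\neg(x\wedge y)\vee n=(\neg x\vee n)\vee(\neg y\vee n)$. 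The verification for $\vee$ is dual.

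For injectivity, suppose $\varphi(x)=\varphi(y)$. Then $x\vee n=y\vee n$, and applying $\neg$ to the equality $\neg x\vee n=\neg y\vee n$ (using $\neg n=n$ and de Morgan) gives $x\wedge n=y\wedge n$. By absorption and distributivity,
\[ x = x\wedge(x\vee n) = x\wedge(y\vee n) = (x\wedge y)\vee(x\wedge n) = (x\wedge y)\vee(y\wedge n) \leq y, \]
and the reverse inequality follows by symmetry, so $x=y$. The only mildly delicate point is spotting the definition of $\varphi$; once it is in hand, the homomorphism checks are routine distributive/de Morgan manipulations, and injectivity needs both coordinates of the pair (either alone would be insufficient, as $x\vee n$ determines only the ``positive part'') combined via the short computation above.
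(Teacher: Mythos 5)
Your proposal is correct and follows essentially the same route as the paper: the same map $\varphi(x)=(x\vee n,\lnot x\vee n)$, the same membership check via $(x\vee n)\wedge(\lnot x\vee n)=(x\wedge\lnot x)\vee n=n$, and the same homomorphism verifications. Your injectivity argument merely spells out the distributivity step that the paper leaves implicit, which is a welcome clarification rather than a deviation.
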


\begin{proof}
We define $\varphi :\mathcal{A}\rightarrow \pi (\nabla _{\mathcal{A}})$
setting $\varphi (x)=(x\vee n,\lnot x\vee n)$. Firstly we show that $\varphi
(x)\in \pi (\nabla _{\mathcal{A}})$. We observe that $\nabla _{\mathcal{A}}$
is considered as a bounded lattice, so $1^{\nabla _{\mathcal{A}}}=1$ and $%
0^{\nabla _{\mathcal{A}}}=n$. Meet and join of $\nabla _{\mathcal{A}}$ are
inherited from $\mathcal{A}$, so we simply write $\wedge $ and $\vee $
instead of $\wedge ^{\nabla _{\mathcal{A}}}$ and $\vee ^{\nabla _{\mathcal{A}%
}}$ when no confusion is possible. Both $x\vee n$ and $\lnot x\vee n$
belongs to $\nabla _{\mathcal{A}}$, because $n\leq x\vee n$ and $n\leq \lnot
x\vee n$, so we must only verify that
$(x\vee n)\wedge (\lnot x\vee n)=0^{\nabla _{\mathcal{A}}}$,
what follows immediately because $(x\vee n)\wedge (\lnot x\vee n)=(x\wedge
\lnot x)\vee n=n.$

$\varphi $ is injective, for suppose $\varphi
(x)=\varphi (y)$, then $x\vee n=y\vee n$ and $\lnot x\vee n=\lnot y\vee n$
and so also $x\wedge n=y\wedge n$. From distributivity $x=y$ follows.

$\varphi $ preserves $\wedge $: 
\begin{eqnarray*}
\varphi (x\wedge y)&=&((x\wedge y)\vee n,\lnot (x\wedge y)\vee n) \\
&=&((x\wedge y)\vee n,\lnot x\vee \lnot y\vee n) \\
&=&((x\vee n)\wedge (y\vee n),(\lnot x\vee n)\vee (\lnot y\vee n)) \\
&=&(x\vee n,\lnot x\vee n)\wedge ^{\pi (\mathcal{A})}(y\vee n,\lnot y\vee n) \\
&=&\varphi (x)\wedge ^{\pi (\mathcal{A})}\varphi (y).
\end{eqnarray*}
An analogous proof shows that $\varphi $ preserves $\vee $. As for $\lnot $: 
\begin{equation*}
\varphi (\lnot x)=(\lnot x\vee n,\lnot \lnot x\vee n)
=(\lnot x\vee n,x\vee n)
=\lnot ^{\pi (\mathcal{A})}(x\vee n,\lnot x\vee n)
=\lnot {^{\pi (\mathcal{A})}}(\varphi (x)).
\end{equation*}
$\varphi $ preserves $0$, $1$ and $n$: 
\begin{eqnarray*}
\varphi (0)=(n,1)=(0^{\nabla _{\mathcal{A}}},1^{\nabla _{\mathcal{A}%
}})=0^{\pi (\mathcal{A})}, \\
\varphi (0)=(1,n)=(1^{\nabla _{\mathcal{A}}},0^{\nabla _{\mathcal{A}%
}})=1^{\pi (\mathcal{A})}, \\
\varphi (0)=(n,n)=(0^{\nabla _{\mathcal{A}}},0^{\nabla _{\mathcal{A}%
}})=n^{\pi (\mathcal{A})}.
\end{eqnarray*}
\end{proof}

For every $a\in A$, we call $a\vee n$ its \textit{positive part} and $\lnot
a\vee n$ its \textit{negative part}, what deliberately recalls the positive
and negative elements of a partial set $(A,B)$. In fact, when $a$ is a
partial set $(A,B)$, we have $(A,B)\sqcup n=(A,\emptyset )$ and $%
-(A,B)\sqcup n=(B,\emptyset )$, where $A$ and $B$ are respectively the set
of the positive and negative elements of $(A,B)$. In conclusion, every $a\in
A$, as a consequence of the above theorem, can be seen as a pair constituted
by its positive part $a\vee n$ and its negative part $\lnot a\vee n$.

The following picture shows an example of $\mathcal{A}$ and $\pi (\nabla _{%
\mathcal{A}})$.

\begin{center}
\setlength{\unitlength}{1mm} 
\begin{picture}(85,50)
\put(0,0){
\begin{picture}(30,50)
\put(15,5){\circle*{1}}

\put(5,15){\circle*{1}}
\put(25,15){\circle*{1}}
\put(15,25){\circle*{1}}
\put(5,35){\circle*{1}}
\put(25,35){\circle*{1}}
\put(15,45){\circle*{1}}

\put(14,1){$0$}
\put(14,46){$1$}
\put(0,14){$ \neg b$}
\put(26,34){$b$}

\put(26,14){$ \neg a$}
\put(2,34){$a$}

\put(17,24){$n$}

\put(15,5){\line(1,1){10}}
\put(15,5){\line(-1,1){10}}
\put(15,45){\line(1,-1){10}}
\put(15,45){\line(-1,-1){10}}

\put(5,15){\line(1,1){20}}
\put(25,15){\line(-1,1){20}}
\end{picture}
}
\put(38,0){
\begin{picture}(50,50)
\put(25,5){\circle*{1}}
\put(15,15){\circle*{1}}
\put(35,15){\circle*{1}}
\put(25,25){\circle*{1}}
\put(15,35){\circle*{1}}
\put(35,35){\circle*{1}}
\put(25,45){\circle*{1}}
\put(5,25){\circle*{1}}
\put(45,25){\circle*{1}}

\put(23,1){$n,1$}
\put(23,46){$1,n$}
\put(8,14){$n,b$}
\put(37,34){$b,n$}

\put(37,14){$n,a$}
\put(8,34){$a,n$}

\put(-1,24){$a,b$}
\put(46,24){$b,a$}

\put(27,24){$n,n$}

\put(25,5){\line(1,1){20}}
\put(25,5){\line(-1,1){20}}
\put(25,45){\line(1,-1){20}}
\put(25,45){\line(-1,-1){20}}

\put(15,15){\line(1,1){20}}
\put(35,15){\line(-1,1){20}}
\end{picture}
}

\end{picture}
\end{center}

\section{Partial valuations\label{parvalpar}}

We introduce the notion of partial valuation on DMF-algebras as a
generalization of the notion of measure of partial probability on a field of
partial sets. As a consequence we adopt the set $T$ partial probability
values, introduced in par. \ref{parprobpar}, as the codomain of partial
valuations. The valuation of a partial set $(A,B)$ with a pair of numbers $%
(x,y)$ was rather natural, but the adoption of such a kind of valuations for
the elements of a DMF-algebra $\mathcal{A}$ needs some explanation. The main
reason is that every $a\in A$, as a consequence of theorem \ref{teocostrdmf}%
, can be seen as a pair constituted by its positive part $a\vee n$ and its
negative part $\lnot a\vee n$. As a result we can transform valued bounded
distributive lattices in valued DMF-algebras with the same construction
which transforms bounded distributive lattices in DMF-algebras

For all $\mathcal{A}\in DMF$, we say that $\overline{v}:A\rightarrow T$ is a 
\textit{partial valuation} on $\mathcal{A}$ if the following axioms are
satisfied:

\begin{enumerate}
\item  $\overline{v}(0)=(0,1),$

\item  $\overline{v}(a\vee b)=\overline{v}(a)+\overline{v}(b)-\overline{v}%
(a\wedge b),$

\item  $\overline{v}(\lnot a))=\sigma (\overline{v}(a)),$

\item  if $n\leq a$ then $(0,0)\preccurlyeq \overline{v}(a)$,
\end{enumerate}

\noindent where $\sigma :T\rightarrow T$ is defined by $\sigma (x,y)=(y,x)$.
If $\mathcal{A}$ is a DMF-algebra and $\overline{v}$ a partial valuation on $%
\mathcal{A}$, we say that $(\mathcal{A},\overline{v})$ is a \textit{valued}
DMF-\textit{algebra.} Obviously every measure of partial probability on a
field of partial sets $\mathcal{G}_{S}$ is a partial valuation on $\mathcal{G%
}_{S}$ and whatever result we can obtain about valued DMF-algebras can be
transferred to partial probability spaces. The following theorem collects
some general properties of partial valuations. In the proof the following
properties of $\sigma $ will be seldom used: i) $\sigma \sigma (x,y)=(x,y)$,
ii) $(x,y)\preceq (w,z)$ iff $\sigma (w,z)\preceq \sigma (x,y)$.

\begin{theorem}
\label{teopropr_v}If $\ \mathcal{A}$ is a DMF-algebra and $\overline{v}$ a
partial valuation on $\mathcal{A}$, then:

\begin{enumerate}
\item  $\overline{v}(1)=(1,0)$,

\item  $\overline{v}(n)=(0,0)$,

\item  if $a\leq n$ then $\overline{v}(a)\preccurlyeq (0,0)$,

\item  $\overline{v}(a)=\overline{v}(a\wedge n)+\overline{v}(a\vee n)$,

\item  $\overline{v}(a)=(\overline{v}(a\vee n)_{0},\overline{v}(\lnot a\vee
n)_{0})$.
\end{enumerate}
\end{theorem}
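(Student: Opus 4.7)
The plan is to handle the five parts roughly in order, using each earlier part as a stepping stone for the next. Throughout I will use that $\lnot 0 = 1$, $\lnot n = n$, and that $\sigma$ is an order-reversing involution of $T$ with $\sigma(x,y) = (y,x)$. I will also use the observation, immediate from the definition of $\preceq$, that $(0,0) \preceq (u,v)$ forces $v = 0$, since values are in $[0,1]$.

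For part~1, I apply axiom~3 to $0$: $\overline{v}(1) = \overline{v}(\lnot 0) = \sigma(\overline{v}(0)) = \sigma(0,1) = (1,0)$. For part~2, axiom~3 at $n$ gives $\overline{v}(n) = \sigma(\overline{v}(n))$, so the two components of $\overline{v}(n)$ are equal; axiom~4 (with $a = n$, noting $n \leq n$) forces $(0,0) \preccurlyeq \overline{v}(n)$, so the second component is $0$, hence both components vanish. For part~3, from $a \leq n$ we get $\lnot a \geq \lnot n = n$, so axiom~4 yields $(0,0) \preccurlyeq \overline{v}(\lnot a) = \sigma(\overline{v}(a))$; applying the order-reversing $\sigma$ gives $\overline{v}(a) \preccurlyeq (0,0)$.

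For part~4, I apply axiom~2 to the pair $a, n$:
\begin{equation*}
\overline{v}(a \vee n) = \overline{v}(a) + \overline{v}(n) - \overline{v}(a \wedge n),
\end{equation*}
and since $\overline{v}(n) = (0,0)$ by part~2, rearranging gives $\overline{v}(a) = \overline{v}(a \vee n) + \overline{v}(a \wedge n)$. This is the main computational step but is completely routine once part~2 is in hand.

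Part~5 is the trickiest, and where I expect to spend the most thought. Write $\overline{v}(a \vee n) = (x_1, x_2)$. Since $a \vee n \geq n$, axiom~4 gives $(0,0) \preccurlyeq \overline{v}(a \vee n)$, which forces $x_2 = 0$; so $\overline{v}(a \vee n) = (x_1, 0)$ where $x_1 = \overline{v}(a \vee n)_0$. Similarly $\overline{v}(\lnot a \vee n) = (y_1, 0)$ with $y_1 = \overline{v}(\lnot a \vee n)_0$. The key algebraic identity is the De Morgan relation $\lnot a \vee n = \lnot a \vee \lnot n = \lnot(a \wedge n)$, which via axiom~3 gives $\overline{v}(a \wedge n) = \sigma(\overline{v}(\lnot a \vee n)) = \sigma(y_1, 0) = (0, y_1)$. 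Substituting into part~4,
\begin{equation*}
\overline{v}(a) = \overline{v}(a \vee n) + \overline{v}(a \wedge n) = (x_1, 0) + (0, y_1) = (x_1, y_1),
\end{equation*}
which is exactly the claim. The main obstacle, as foreshadowed, is identifying the right way to express $\overline{v}(a \wedge n)$ in terms of $\overline{v}(\lnot a \vee n)$; once one recognizes the De Morgan duality $\lnot(a \wedge n) = \lnot a \vee n$ and combines it with axiom~3, the argument collapses to a two-line computation.
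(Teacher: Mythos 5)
Your proof is correct and takes essentially the same approach as the paper's: axiom~3 applied to $0$ for part~1, axiom~4 together with the order-reversing involution $\sigma$ for parts~2 and~3, axiom~2 plus part~2 for part~4, and for part~5 the decomposition from part~4 combined with the De Morgan identity $\lnot(a\wedge n)=\lnot a\vee n$ and axiom~3. The only cosmetic differences are that you obtain $\overline{v}(n)=(0,0)$ from the fixed-point equation $\overline{v}(n)=\sigma(\overline{v}(n))$ rather than from two applications of axiom~4, and in part~5 you invoke the De Morgan step up front where the paper first uses part~3 to pin down the shape of $\overline{v}(a\wedge n)$; neither changes the substance.
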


\begin{proof}
1. In any DM-algebra we have $1=\lnot 0$, so $\overline{v}(\lnot 0)=\sigma (%
\overline{v}((0))=\sigma (0,1)=(1,0)$ by axiom 3.

2. In any DMF-algebra we have both $n\leq n$ and $n\leq \lnot n$, so we have 
$(0,0)\preccurlyeq \overline{v}(n)$ and $(0,0)\preccurlyeq \overline{v}%
(\lnot (n))=\sigma (\overline{v}(n))$ by axiom 4. Then $\sigma \sigma (%
\overline{v}(n))\preccurlyeq \sigma (0,0)$ and $\overline{v}(n)\preccurlyeq
(0,0)$. As $\preccurlyeq $ is a partial order, we conclude with $(0,0)=%
\overline{v}(n)$.

3. If $a\leq n$ then $n\leq \lnot a$ and $(0,0)\preccurlyeq \overline{v}%
(\lnot a)\preccurlyeq \sigma (\overline{v}(a))$, by axiom 4. So $\sigma
\sigma (\overline{v}(a))\preccurlyeq \sigma (0,0)$ and $\overline{v}%
(a)\preccurlyeq (0,0)$.

4. By axiom 2 and point 2 above, we have $\overline{v}(a\vee n)=\overline{v}%
(a)-\overline{v}(a\wedge n)$.

5. We assume $\overline{v}(a)=(x,y)$. By point 4 above, we know that $(x,y)=%
\overline{v}(a\wedge n)+\overline{v}(a\vee n)$. By point 3 above, we have $%
\overline{v}(a\wedge n)\preccurlyeq (0,0)$ and by axiom 4 we have $%
(0,0)\preccurlyeq \overline{v}(a\vee n)$, so there are $z$ and $w$ such that 
$\overline{v}(a\wedge n)=(0,z)$ and $\overline{v}(a\vee n)=(w,0)$. Thus $%
(x,y)=(0,z)+(w,0)$ and $x=w$ and $y=z$. We can conclude that $\overline{v}%
(a)_{0}=\overline{v}(a\vee n)_{0}$ and $\overline{v}(a)_{1}=\overline{v}%
(a\wedge n)_{1}$. But $\sigma (\overline{v}(a\wedge n))=\overline{v}(\lnot
(a\wedge n))=\overline{v}(\lnot a\vee n)$, so $\overline{v}(a)_{1}=\overline{%
v}(\lnot a\vee n)_{0}$.
\end{proof}

\begin{theorem}
\label{teo333}If $\mathcal{A}$ and $\mathcal{B}$ are DMF-algebras, $\varphi :%
\mathcal{A}\rightarrow \mathcal{B}$ is a morphism and $\overline{v}$ is a
partial valuation on $\mathcal{B}$, then $\theta =\overline{v}\circ \varphi $
is a partial valuation on $\mathcal{A}$.
\end{theorem}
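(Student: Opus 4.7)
The plan is to verify the four defining axioms of partial valuation for $\theta$ one by one, using exclusively the fact that $\varphi$ preserves the operations $\wedge,\vee,\lnot,0,1,n$ (hence also the order) together with the corresponding axiom satisfied by $\overline{v}$ on $\mathcal{B}$. This is the partial-valuation analogue of Theorem \ref{teo222}(1), and I expect the proof to be essentially mechanical.

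First I would handle axiom 1: since $\varphi(0^{\mathcal{A}}) = 0^{\mathcal{B}}$, we get $\theta(0) = \overline{v}(\varphi(0)) = \overline{v}(0) = (0,1)$ by axiom 1 on $\overline{v}$. Next, for axiom 2 I would compute
\begin{eqnarray*}
\theta(a \vee b) &=& \overline{v}(\varphi(a \vee b)) \\
&=& \overline{v}(\varphi(a) \vee \varphi(b)) \\
&=& \overline{v}(\varphi(a)) + \overline{v}(\varphi(b)) - \overline{v}(\varphi(a) \wedge \varphi(b)) \\
&=& \theta(a) + \theta(b) - \theta(a \wedge b),
\end{eqnarray*}
using that $\varphi$ preserves both $\vee$ and $\wedge$ and that $\overline{v}$ satisfies axiom 2 on $\mathcal{B}$. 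For axiom 3, since $\varphi$ preserves $\lnot$,
\begin{equation*}
\theta(\lnot a) = \overline{v}(\varphi(\lnot a)) = \overline{v}(\lnot \varphi(a)) = \sigma(\overline{v}(\varphi(a))) = \sigma(\theta(a)).
\end{equation*}

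The only axiom whose verification requires a short comment is axiom 4. Assume $n^{\mathcal{A}} \leq a$. Since $\varphi$ is a DMF-morphism, it preserves $n$ and is a lattice morphism, hence monotone; therefore $n^{\mathcal{B}} = \varphi(n^{\mathcal{A}}) \leq \varphi(a)$. Applying axiom 4 of $\overline{v}$ to $\varphi(a)$ gives $(0,0) \preccurlyeq \overline{v}(\varphi(a)) = \theta(a)$, as required. No step presents a genuine obstacle; the entire argument is a transfer of the four axioms through the morphism $\varphi$.
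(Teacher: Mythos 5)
Your proof is correct and follows exactly the same route as the paper's: a direct verification of the four axioms, transferring each through the morphism $\varphi$, with the same observation for axiom 4 that $n^{\mathcal{B}}=\varphi(n^{\mathcal{A}})\leq\varphi(a)$ by preservation of $n$ and monotonicity. Nothing to add.
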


\begin{proof}
We show that $\theta $ satisfies the four axioms of partial valuation.

1. $\theta (0^{\mathcal{A}})=\overline{v}(\varphi (0^{\mathcal{A}}))=%
\overline{v}(0^{\mathcal{B}})=(0,1)$.

2. 
\begin{eqnarray*}
\theta (a\vee b)&=&\overline{v}(\varphi (a)\vee \varphi (b)) \\
&=&\overline{v}(\varphi (a))+\overline{v}(\varphi (b))-\overline{v}(\varphi
(a)\wedge \varphi (b)) \\
&=&\overline{v}(\varphi (a))+\overline{v}(\varphi (b))-\overline{v}(\varphi
(a\wedge b)) \\
&=&\theta (a)+\theta (b)-\theta (a\wedge b).
\end{eqnarray*}

3. $\theta (\lnot a)=\overline{v}(\varphi (\lnot a))=$ $\overline{v}(\lnot
\varphi (a))=\sigma (\overline{v}(\varphi (a)))=\sigma (\theta (a))$.

4. Let $n^{\mathcal{A}}\leq a$, then $n^{\mathcal{B}}=\varphi (n^{\mathcal{A}%
})\leq \varphi (a)$, so $(0,0)\preccurlyeq \overline{v}(\varphi (a))=\theta
(a)$.
\end{proof}

The following theorem shows that the general construction of DMF-algebras
from bounded distributive lattices can be transferred to valued algebras.

\begin{theorem}
\label{teom1}If $\mathcal{A}$ is a bounded distributive lattice and $v $ a
valuation on $\mathcal{A}$, then $\overline{v}_{v }:\pi (\mathcal{A}%
)\rightarrow T$ defined by $\overline{v}_{v }(x,y)=(v (x),v (y))$ is a
partial valuation on $\pi (\mathcal{A})$.
\end{theorem}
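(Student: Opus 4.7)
The plan is to verify each of the four axioms defining a partial valuation, using two facts throughout: $v$ is a (bounded-lattice) valuation on $\mathcal{A}$, so it satisfies $v(a\vee b)=v(a)+v(b)-v(a\wedge b)$ with $v(0)=0$ and $v(1)=1$; and every element $(x,y)\in \pi(A)$ satisfies the disjointness condition $x\wedge y=0$. The latter is what forces the image to land in $T$, since $v(x)+v(y)=v(x\vee y)-v(x\wedge y)=v(x\vee y)\le 1$, and non-negativity plus isotonicity of $v$ keeps both coordinates in $[0,1]$.

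First I would check Axiom 1 directly: $\overline{v}_v(0^{\pi(\mathcal{A})})=\overline{v}_v(0,1)=(v(0),v(1))=(0,1)$. Axiom 3 is also immediate from the definitions, since
\[
\overline{v}_v(\lnot(x,y))=\overline{v}_v(y,x)=(v(y),v(x))=\sigma(v(x),v(y))=\sigma(\overline{v}_v(x,y)).
\]
For Axiom 4, recall that $n^{\pi(\mathcal{A})}=(0,0)$, so the hypothesis $n\le (x,y)$ in the order of $\pi(\mathcal{A})$ unfolds to $0\le x$ and $y\le 0$, hence $y=0$. Then $\overline{v}_v(x,y)=(v(x),0)$, which satisfies $(0,0)\preccurlyeq (v(x),0)$ because $v$ is non-negative.

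The main computational step is Axiom 2, which requires unpacking the definitions of $\wedge$ and $\vee$ in $\pi(\mathcal{A})$ (remembering that the second coordinate lives in the dual lattice $\mathcal{A}^\circ$, so meet and join are swapped there). Computing directly,
\[
\overline{v}_v\bigl((x,y)\vee(z,w)\bigr)=\overline{v}_v(x\vee z,y\wedge w)=(v(x\vee z),v(y\wedge w)),
\]
while applying the valuation identity to each coordinate of $\mathcal{A}$ gives
\[
\overline{v}_v(x,y)+\overline{v}_v(z,w)-\overline{v}_v(x\wedge z,y\vee w)=\bigl(v(x)+v(z)-v(x\wedge z),\,v(y)+v(w)-v(y\vee w)\bigr),
\]
and the two sides agree: the first coordinate matches by $(\ast)$ applied to $x,z$, and the second by rearranging $(\ast)$ applied to $y,w$ to $v(y\wedge w)=v(y)+v(w)-v(y\vee w)$.

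I expect no serious obstacle: the only subtle point is keeping the bookkeeping straight between the lattice operations of $\mathcal{A}$ and those of $\pi(\mathcal{A})$, and remembering that the second-coordinate operations are dualised. Once this is unwound, the statement reduces to the single valuation equation $(\ast)$ applied coordinatewise in $\mathcal{A}$, together with $v(0)=0$, $v(1)=1$, and the disjointness condition that defines $\pi(A)$.
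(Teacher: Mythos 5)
Your proof is correct and follows essentially the same route as the paper's: verify the four axioms coordinatewise, using the disjointness condition $x\wedge y=0$ together with $v(0)=0$ to see that the values land in $T$, and the dualised second-coordinate operations to reduce Axiom 2 to the valuation identity $(\ast)$ applied in each coordinate. The only blemish is a harmless sign slip in your membership-in-$T$ argument, where $v(x)+v(y)=v(x\vee y)+v(x\wedge y)$ (not minus); since $v(x\wedge y)=v(0)=0$ the conclusion is unaffected.
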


\begin{proof}
Firstly we observe that $\pi (\mathcal{A})$ is a DMF-algebra, by theorem \ref
{teopi}, then we verify that $\overline{v}_{v }$ effectively takes $\pi (A)
$ to $T$. It is enough to show that, for all $(a,b)$ in $\pi (A)$, $v
(a)+v (b)$ belongs to $[0,1]$. \ As $v $ is a valuation on $\mathcal{A}$%
, 
\begin{equation*}
v (a)+v (b)=v (a\vee b)+v (a\wedge b)=v (a\vee b),
\end{equation*}
where $a\wedge b=0$ (because $(a,b)$ is in $\pi (A)$) and obviously $v
(a\vee b)\in \lbrack 0,1]$. We have only to show that $\overline{v}_{v }$
satisfies the four axioms of partial valuations.

Axiom 1: $\overline{v}_{v }(0)=(v (0),v (1))=(0,1)$, because $v $ is
a bounded lattices valuation.

Axiom 2: we have 
\begin{equation*}
\overline{v}_{v }((a,b)\vee (a^{\prime },b^{\prime }))=\overline{v}_{v
}((a\vee a^{\prime },b\wedge b^{\prime }))
=(v (a\vee a^{\prime }),v (b\wedge b^{\prime }))
\end{equation*}
and 
\begin{equation*}
\overline{v}_{v }((a,b)\wedge (a^{\prime },b^{\prime }))=(v (a\wedge
a^{\prime }),v (b\vee b^{\prime })).
\end{equation*}
As $v $ is a lattices valuation, we have both 
\begin{equation*}
v (a)+v (a^{\prime })=v (a\vee a^{\prime })+v (a\wedge a^{\prime })
\text{ and } 
v (b)+v (b^{\prime })=v (b\vee b^{\prime })+v (b\wedge b^{\prime }),
\end{equation*}
we can conclude that 
\begin{eqnarray*}
\overline{v}_{v }((a,b)\vee (a^{\prime },b^{\prime }))+\overline{v}_{v
}((a,b)\wedge (a^{\prime },b^{\prime }))&=&(v (a)+v (a^{\prime }),v
(b)+v (b^{\prime })) \\
&=&(v (a),v (b))+(v (a^{\prime }),v (b^{\prime })) \\
&=&\overline{v}_{v }(a,b)+\overline{v}_{v }(a^{\prime },b^{\prime }).
\end{eqnarray*}

Axiom 3: $\overline{v}_{v }(\lnot (a,b))=(v (b),v (a))=\sigma (v
(a),v (b))=\sigma (\overline{v}_{v }(a,b))$.

Axiom 4. We assume $n\leq (a,b)$. In $\pi (\mathcal{A})$ we have $n=(0^{%
\mathcal{A}},0^{\mathcal{A}})$, so $b=0^{\mathcal{A}}$ and $v (b)=0$,
because $v $ is a lattice valuation on $\mathcal{A}$. Thus $%
(0,0)\preccurlyeq (v (a),0)=(v (a),v (b))=\overline{v}_{v }(a,b)$.
\end{proof}

As a consequence we can define, for any valuation $v $ on a bounded
distributive lattice $\mathcal{A}$, the \textit{partial valuation} $%
\overline{v}_{v }$ \textit{on} $\pi (\mathcal{A})$ \textit{induced by}$\
v $ as the function $\overline{v}_{v }(x,y)=(v (x),v (y))$. The
following theorems show that every partial valuation on a DMF-algebra can be
obtained in this way.

\begin{theorem}
\label{teom2}For every partial valuation $\overline{v}$ on a DMF-algebra $%
\mathcal{B}$, the function $v :$ $\nabla _{\mathcal{B}}\rightarrow \lbrack
0,1]$ defined setting $v (x)=\overline{v}(x)_{0}$ is a valuation on the
bounded lattice $\nabla _{\mathcal{B}}$.
\end{theorem}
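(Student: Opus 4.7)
The plan is to verify three things implied by the statement: that $v$ is well-defined as a map into $[0,1]$, that the meet and join used for the valuation identity really are those of the bounded lattice $\nabla_{\mathcal{B}}$, and finally that the identity $(\ast)$ holds.

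First I would observe that $\nabla_{\mathcal{B}} = [n,1]$ is a sublattice of $\mathcal{B}$: if $n \leq a$ and $n \leq b$ then $n \leq a \wedge b$ and $n \leq a \vee b$, so meet and join computed in $\nabla_{\mathcal{B}}$ coincide with those in $\mathcal{B}$. This lets us invoke the axioms for $\overline{v}$, which are stated for operations in $\mathcal{B}$, without ambiguity.

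Next, to show $v$ takes values in $[0,1]$, fix $x \in \nabla_{\mathcal{B}}$. By axiom 4 of partial valuation, $n \leq x$ yields $(0,0) \preccurlyeq \overline{v}(x)$, so $\overline{v}(x)_0 \geq 0$. Since $\overline{v}(x) \in T$ we also have $\overline{v}(x)_0 + \overline{v}(x)_1 \leq 1$ with both components non-negative, whence $\overline{v}(x)_0 \leq 1$. Therefore $v(x) \in [0,1]$.

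Finally, for the valuation identity, let $a,b \in \nabla_{\mathcal{B}}$. Axiom 2 of partial valuation gives the identity in $T$
\begin{equation*}
\overline{v}(a \vee b) = \overline{v}(a) + \overline{v}(b) - \overline{v}(a \wedge b),
\end{equation*}
where $+$ and $-$ are componentwise. Taking first coordinates yields
\begin{equation*}
v(a \vee b) = v(a) + v(b) - v(a \wedge b),
\end{equation*}
which is exactly $(\ast)$ for $v$ on $\nabla_{\mathcal{B}}$. The argument is essentially a projection-to-first-coordinate bookkeeping; the only real content is the appeal to axiom 4 to guarantee $v \geq 0$, and the observation that closure of $\nabla_{\mathcal{B}}$ under $\wedge,\vee$ lets us read off the valuation identity directly from the $T$-valued identity for $\overline{v}$. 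There is no substantial obstacle.
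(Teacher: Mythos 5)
Your core computation is the same as the paper's: axiom 2 for $\overline{v}$ projects componentwise to the identity $(\ast)$ for $v$, and your preliminary observations (that $\nabla_{\mathcal{B}}=[n,1]$ is closed under $\wedge$ and $\vee$, and that $\overline{v}(x)_{0}\in[0,1]$ because $\overline{v}$ takes values in $T$) are correct and harmless additions.

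There is, however, one omission. In this paper a ``valuation on the bounded lattice $\nabla_{\mathcal{B}}$'' is meant in the sense of a \emph{bounded lattices valuation}: besides $(\ast)$ one must check $v(0^{\nabla_{\mathcal{B}}})=0$ and $v(1^{\nabla_{\mathcal{B}}})=1$, where the bottom of $\nabla_{\mathcal{B}}$ is $n^{\mathcal{B}}$ and the top is $1^{\mathcal{B}}$. This is exactly what the paper's own proof verifies (concluding ``so $v$ is a bounded lattices valuation''), and it is needed downstream: theorem \ref{teom3} feeds this $v$ into theorem \ref{teom1}, whose proof of axiom 1 uses $v(0)=0$ and $v(1)=1$. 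The missing checks are immediate from points 1 and 2 of theorem \ref{teopropr_v}: $\overline{v}(1^{\mathcal{B}})=(1,0)$ gives $v(1^{\nabla_{\mathcal{B}}})=1$, and $\overline{v}(n^{\mathcal{B}})=(0,0)$ gives $v(0^{\nabla_{\mathcal{B}}})=0$. With those two lines added, your argument matches the paper's.
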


\begin{proof}
By point 1) of theorem \ref{teopropr_v} we have $\overline{v}(1^{\mathcal{B}%
})=(1,0)$, thus $v (1^{\nabla _{\mathcal{B}}})=1$ because $1^{\mathcal{B}%
}=1^{\nabla _{\mathcal{B}}}$. By point 2 of theorem \ref{teopropr_v} we have$%
\overline{v}(n^{\mathcal{B}})=(0,0)$, thus $v (0^{\nabla _{\mathcal{B}}})=0
$ because $n^{\mathcal{B}}=0^{\nabla _{\mathcal{B}}}$. Finally $v $
satisfies (\ref{1}) of par. \ref{parboolval} because $\overline{v}$ satisfies
axiom 2, so $v $ is a bounded lattices valuation.
\end{proof}

\begin{theorem}
\label{teom3}If $\overline{v}$ is a partial valuation on a DMF-algebra $%
\mathcal{B}$ then there is a bounded distributive lattice $\mathcal{A}$ and
a valuation $v $ on $\mathcal{A}$ such that:

\begin{enumerate}
\item  there is a monomorphism $\varphi :\mathcal{B}\rightarrow \pi (%
\mathcal{A})$ ,

\item  for all $b\in B$, $\overline{v}(b)=\overline{v}_{v }(\varphi (b))$,
where $\overline{v}_{v }$ is the partial valuation induced by $v $.
\end{enumerate}
\end{theorem}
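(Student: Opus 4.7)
The plan is to assemble the three preceding theorems into a single representation. The candidates for $\mathcal{A}$, $v$ and $\varphi$ are already forced: Theorem \ref{teocostrdmf} produces a canonical embedding of any DMF-algebra into $\pi(\nabla_{\mathcal{B}})$, Theorem \ref{teom2} extracts a valuation out of any partial valuation by taking the first coordinate restricted to $\nabla_{\mathcal{B}}$, and Theorem \ref{teom1} tells us how the induced partial valuation $\overline{v}_v$ on $\pi(-)$ looks. So the task reduces to checking that these three constructions, when stacked, reproduce $\overline{v}$ on the nose.

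First, I would take $\mathcal{A} = \nabla_{\mathcal{B}}$, viewed as a bounded distributive lattice with bottom $n^{\mathcal{B}}$ and top $1^{\mathcal{B}}$ and meet/join inherited from $\mathcal{B}$ (these are in $\nabla_{\mathcal{B}}$ because $\nabla_{\mathcal{B}}$ is the interval $[n,1]$, hence closed under the lattice operations). Second, I would define $v : \nabla_{\mathcal{B}} \to [0,1]$ by $v(x) = \overline{v}(x)_0$; Theorem \ref{teom2} gives that $v$ is a valuation on $\nabla_{\mathcal{B}}$. Third, I would take $\varphi : \mathcal{B} \to \pi(\nabla_{\mathcal{B}})$ to be the map $\varphi(b) = (b \vee n,\ \lnot b \vee n)$ of Theorem \ref{teocostrdmf}, which already delivers point 1 of the statement, since that theorem proves $\varphi$ is a DMF-monomorphism.

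For point 2, I would just compute, using Theorem \ref{teom1}:
\begin{equation*}
\overline{v}_{v}(\varphi(b)) = \overline{v}_{v}(b \vee n,\ \lnot b \vee n) = (v(b \vee n),\ v(\lnot b \vee n)) = (\overline{v}(b \vee n)_0,\ \overline{v}(\lnot b \vee n)_0).
\end{equation*}
By point 5 of Theorem \ref{teopropr_v}, the right-hand side is exactly $\overline{v}(b)$, which closes the proof.

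There is essentially no obstacle: all of the real work (the embedding, the lattice valuation, and the key identity expressing $\overline{v}(b)$ in terms of its values on $b \vee n$ and $\lnot b \vee n$) has been done in the earlier lemmas. The only thing worth emphasizing is that point 5 of Theorem \ref{teopropr_v} is precisely the compatibility statement one needs, so the present theorem should be read as the assertion that the diagram of constructions $\mathcal{B} \hookrightarrow \pi(\nabla_{\mathcal{B}})$ and $\overline{v} \mapsto v \mapsto \overline{v}_{v}$ commutes, with the earlier results doing all the algebraic bookkeeping.
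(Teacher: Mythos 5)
Your proposal is correct and follows essentially the same route as the paper: take $\mathcal{A}=\nabla_{\mathcal{B}}$, $v(x)=\overline{v}(x)_0$, and the embedding $\varphi(b)=(b\vee n,\lnot b\vee n)$, then verify the compatibility identity. The only difference is that you invoke point 5 of Theorem \ref{teopropr_v} directly, whereas the paper re-derives that identity inside the proof; your version is the cleaner of the two.
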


\begin{proof}
1. If we set $\mathcal{A}=\nabla _{\mathcal{B}}$, then by theorem \ref
{teocostrdmf} there is a monomorphism $\varphi :\mathcal{B}\rightarrow \pi
(\nabla _{\mathcal{B}})$ defined setting $\varphi (b)=(b\vee n,\lnot b\vee n)
$.

2. From the partial valuation $\overline{v}$ on $\mathcal{B}$ we get, by the
above theorem, a valuation $v $ on $\nabla _{\mathcal{B}}$, setting $v
(x)=\overline{v}(x)_{0}$. Thus, by theorem \ref{teom1}, we have a partial
valuation $\overline{v}_{v }$ on $\pi (\nabla _{\mathcal{B}})$ setting $%
\overline{v}_{v }(x,y)=(v (x),v (y))$. We show that, for all $b\in B$, 
$\overline{v}(b)=\overline{v}_{v }(\varphi (b))$, what amounts to say 
\begin{equation*}
\overline{v}(b)=(v (b\vee n),v (\lnot b\vee n))=(\overline{v}(b\vee
n)_{0},\overline{v}(\lnot b\vee n)_{0}).
\end{equation*}
As $(\lnot b\vee n)_{0}=(\lnot (b\wedge n))_{0}=(b\wedge n)_{1}$, we can
reduce ourselves to prove 
\begin{equation*}
\overline{v}(b)=(\overline{v}(b\vee n)_{0},\overline{v}(b\wedge n)_{1}).
\end{equation*}
In fact, 
\begin{eqnarray*}
\overline{v}(b)&=&\overline{v}(b\vee n)+\overline{v}(b\wedge n) \\
&=&
(\overline{v}(b\vee n)_{0},\overline{v}(b\vee n)_{1})+
(\overline{v}(b\wedge n)_{0},\overline{v}(b\wedge n)_{1}) \\
&=&(\overline{v}(b\vee n)_{0},0)+(0,\overline{v}(b\wedge n)_{1}) \\
&=&(\overline{v}(b\vee n)_{0},\overline{v}(b\wedge n)_{1}),
\end{eqnarray*}
where the first line follows from point 4 of theorem \ref{teopropr_v} and the
third line from axiom 4, because $n\leq b\vee n$ implies $(0,0)\preccurlyeq 
\overline{v}(b\vee n)$, i.e. $\overline{v}(b\vee n)_{1}=0$, and in the same
way $\overline{v}(b\wedge n)_{0}=0$.
\end{proof}

\begin{theorem}
Every partial probability space $(S,\mathcal{D}(S),\overline{v})$ is the
partial probability space associated to a classical probability space $(S,%
\mathcal{P}(S),p)$.
\end{theorem}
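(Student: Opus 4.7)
The plan is to exhibit the classical probability measure $p$ on $\mathcal{P}(S)$ directly from $\overline{v}$, using $\nabla_{\mathcal{D}(S)}$ as an intermediate structure, and then verify the two claims (Kolmogorov's axioms and the recovery formula $\overline{v}(A,B)=(p(A),p(B))$) by appealing to the machinery developed earlier.

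First I would compute $\nabla_{\mathcal{D}(S)}$ explicitly. For any partial set $(A,B)$ we have $(A,B)\sqcup\lnot(A,B)=(A,B)\sqcup(B,A)=(A\cup B,\emptyset)$, so $\nabla_{\mathcal{D}(S)}=\{(C,\emptyset):C\subseteq S\}$ with meet, join and bounds inherited from $\mathcal{D}(S)$. The map $C\mapsto(C,\emptyset)$ is then a bounded-lattice isomorphism between $\mathcal{P}(S)$ and $\nabla_{\mathcal{D}(S)}$; moreover the image is closed under complements (taking $(C,\emptyset)$ to $(S-C,\emptyset)$), so $\nabla_{\mathcal{D}(S)}$ is in fact a Boolean algebra isomorphic to $\mathcal{P}(S)$.

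Next I would apply theorem \ref{teom2} to the partial valuation $\overline{v}$, obtaining a bounded-lattice valuation $v$ on $\nabla_{\mathcal{D}(S)}$ defined by $v(x)=\overline{v}(x)_{0}$. Transporting $v$ along the isomorphism above defines $p:\mathcal{P}(S)\to[0,1]$ by
\begin{equation*}
p(A)=v(A,\emptyset)=\overline{v}(A,\emptyset)_{0}.
\end{equation*}
Since $\nabla_{\mathcal{D}(S)}$ is a Boolean algebra, $v$ is a Boolean valuation, hence by theorem \ref{teoadd} the transported map $p$ is additive and satisfies $p(S)=1$, i.e.\ $p$ meets Kolmogorov's axioms and $(S,\mathcal{P}(S),p)$ is a classical probability space.

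Finally I would verify that the partial probability space associated to $(S,\mathcal{P}(S),p)$ coincides with $(S,\mathcal{D}(S),\overline{v})$. By point 5 of theorem \ref{teopropr_v},
\begin{equation*}
\overline{v}(A,B)=(\overline{v}((A,B)\vee n)_{0},\overline{v}(\lnot(A,B)\vee n)_{0}).
\end{equation*}
Computing in $\mathcal{D}(S)$, $(A,B)\sqcup n=(A,\emptyset)$ and $\lnot(A,B)\sqcup n=(B,A)\sqcup(\emptyset,\emptyset)=(B,\emptyset)$, so the right-hand side equals $(\overline{v}(A,\emptyset)_{0},\overline{v}(B,\emptyset)_{0})=(p(A),p(B))$, as required. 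The only delicate point in the whole argument is checking that $\nabla_{\mathcal{D}(S)}$ is genuinely a Boolean algebra (not just a bounded lattice), because that is what lets theorem \ref{teoadd} supply Kolmogorov's axioms for $p$; once this is noted, everything else is direct computation with the formulas already proved for partial valuations.
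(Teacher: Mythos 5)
Your proof is correct and follows essentially the same route as the paper: extract the valuation $v(x)=\overline{v}(x)_{0}$ on $\nabla_{\mathcal{D}(S)}$ via theorem \ref{teom2}, transport it along the isomorphism $A\mapsto(A,\emptyset)$ to define $p$, and recover $\overline{v}(A,B)=(p(A),p(B))$ from point 5 of theorem \ref{teopropr_v}. Your explicit check that $\nabla_{\mathcal{D}(S)}$ is a Boolean algebra, so that theorem \ref{teoadd} yields Kolmogorov's axioms for $p$, is a small but welcome addition that the paper leaves implicit.
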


\begin{proof}
We define $p:\mathcal{P}(S)\rightarrow \lbrack 0,1]$ as follows. By theorem 
\ref{teom2}, we know that $v:\nabla _{\mathcal{D}(S)}\rightarrow \lbrack 0,1]
$ defined by $v(x)=\overline{v}(x)_{0}$ is a valuation on $\nabla _{\mathcal{%
D}(S)}$. We know also that $\psi :\mathcal{P}(S)\rightarrow \nabla _{%
\mathcal{D}(S)}$ defined by $\psi (A)=(A,\emptyset )$, for all $A\subseteq S$%
, is a bounded lattices isomorphism. So, by theorem \ref{teo333} , $v\circ
\psi $ is a valuation on $\mathcal{P}(S)$. So we define $p=$ $v\circ \psi $,
i.e. $p(A)=\overline{v}(A,\emptyset )_{0}$, for all $A\subseteq S$. To show
that $(S,\mathcal{D}(S),\overline{v})$ is the partial probability space
associated to the classical probability space $(S,\mathcal{P}(S),p)$, we
have to prove that $\overline{v}(A,B)=(p(a),p(B))$, for every partial event $%
(A,B)\in D(S)$: in fact $\overline{v}(A,B)=(\overline{v}(A,\emptyset )_{0},%
\overline{v}(A,\emptyset )_{1})=(p(A),p(B))$ by point 5 of theorem \ref
{teopropr_v} and by definition of $p$.
\end{proof}

Given a valued DMF-algebra $(\mathcal{A},\overline{v})$, we define a
function $u:A\rightarrow \lbrack 0,1]$ setting $u(a)=1-(\overline{v}(a)_{0}+%
\overline{v}(a)_{1})$. The number $u(a)$ is the \textit{degree of
indetermination} of $a$. The following theorem shows that the values of
elements with the same degree of indetermination are linearly ordered.

\begin{theorem}
If $u(a)=u(b)$ then $\overline{v}(a)\preccurlyeq \overline{v}(b)$ or $%
\overline{v}(b)\preccurlyeq \overline{v}(a)$.
\end{theorem}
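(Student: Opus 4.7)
The plan is to reduce the statement to a direct computation on the pairs of real numbers $\overline{v}(a)$ and $\overline{v}(b)$ in $T$, using only the definitions of $u$ and of the order $\preccurlyeq$.

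Write $\overline{v}(a) = (x_1, y_1)$ and $\overline{v}(b) = (x_2, y_2)$. By the definition $u(c) = 1 - (\overline{v}(c)_0 + \overline{v}(c)_1)$, the hypothesis $u(a) = u(b)$ is equivalent to
\begin{equation*}
x_1 + y_1 = x_2 + y_2.
\end{equation*}
Recall that $(x,y) \preccurlyeq (w,z)$ means $x \leq w$ and $z \leq y$, so I need to show that either ($x_1 \leq x_2$ and $y_2 \leq y_1$) or ($x_2 \leq x_1$ and $y_1 \leq y_2$).

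Since $x_1, x_2 \in [0,1]$ are real numbers, we have the dichotomy $x_1 \leq x_2$ or $x_2 \leq x_1$. In the first case, the displayed equation gives $y_1 - y_2 = x_2 - x_1 \geq 0$, so $y_2 \leq y_1$, and hence $\overline{v}(a) \preccurlyeq \overline{v}(b)$. In the second case, symmetrically, $y_2 - y_1 = x_1 - x_2 \geq 0$, so $y_1 \leq y_2$, and hence $\overline{v}(b) \preccurlyeq \overline{v}(a)$.

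There is no real obstacle: the entire content of the lemma is that a common degree of indetermination forces the two pairs to lie on the same level set $\{(x,y) : x + y = \text{const}\}$ in $T$, and on such a level set the order $\preccurlyeq$ is linear by construction. Once the hypothesis is unfolded via the definition of $u$, only elementary arithmetic on reals is required.
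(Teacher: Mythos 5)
Your proof is correct and is essentially the paper's own argument: unfold $u(a)=u(b)$ into $\overline{v}(a)_0+\overline{v}(a)_1=\overline{v}(b)_0+\overline{v}(b)_1$, then use the dichotomy on the first coordinates to force the reverse inequality on the second coordinates. Nothing is missing.
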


\begin{proof}
If $u(a)=u(b)$ then $\overline{v}(a)_{0}+\overline{v}(a)_{1}=\overline{v}%
(b)_{0}+\overline{v}(b)_{1}$. In general, if $x+y=k=x^{\prime }+y^{\prime }$%
, then we distinguish two cases: i) $x\leq x^{\prime }$ implies $k-x^{\prime
}\leq k-x$ and so $y^{\prime }\leq y$; ii) $x^{\prime }\leq x$ implies $%
y\leq y^{\prime }$. Thus $\overline{v}(a)_{0}\leq \overline{v}(b)_{0}$ and $%
\overline{v}(b)_{1}\leq \overline{v}(a)_{1},$ or $\overline{v}(b)_{0}\leq 
\overline{v}(a)_{0}$ and $\overline{v}(a)_{1}\leq \overline{v}(b)_{1}$. i.e $%
\overline{v}(a)\preccurlyeq \overline{v}(b)$ or $\overline{v}(b)\preccurlyeq 
\overline{v}(a)$.
\end{proof}

\begin{theorem}
The values of boolean elements are linearly ordered
\end{theorem}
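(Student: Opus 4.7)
The plan is to reduce this immediately to the preceding theorem by showing that every boolean element has degree of indetermination $0$; once that is established, any two boolean elements $a,b$ satisfy $u(a)=u(b)=0$ and the preceding theorem delivers comparability.

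The main computation, therefore, is to verify that $\overline{v}(a)_0+\overline{v}(a)_1=1$ whenever $a\in K$. Recalling Theorem \ref{teoBA}, a boolean element is characterized by $a\vee\lnot a=1$ and, equivalently, $a\wedge\lnot a=0$. I would apply axiom 2 of partial valuations to the pair $a,\lnot a$:
\begin{equation*}
\overline{v}(a\vee\lnot a)=\overline{v}(a)+\overline{v}(\lnot a)-\overline{v}(a\wedge\lnot a).
\end{equation*}
Using $\overline{v}(1)=(1,0)$ (point 1 of Theorem \ref{teopropr_v}), axiom 1 giving $\overline{v}(0)=(0,1)$, and axiom 3 giving $\overline{v}(\lnot a)=\sigma(\overline{v}(a))$, if we write $\overline{v}(a)=(x,y)$ the equation becomes $(1,0)=(x,y)+(y,x)-(0,1)=(x+y,\,x+y-1)$. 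Comparing coordinates yields $x+y=1$, so $u(a)=1-(x+y)=0$, as required.

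With this in hand, for any two boolean elements $a,b$ we have $u(a)=0=u(b)$, and the preceding theorem gives $\overline{v}(a)\preccurlyeq\overline{v}(b)$ or $\overline{v}(b)\preccurlyeq\overline{v}(a)$, i.e. linear order on $\overline{v}[K]$. I do not anticipate any serious obstacle: the only thing that could go wrong is a mismatch of signs in the arithmetic on $T$, so the main thing to be careful about is keeping $\sigma$ and the subtraction $-\overline{v}(a\wedge\lnot a)=-(0,1)$ straight when deriving $x+y=1$.
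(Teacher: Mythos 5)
Your proposal is correct and follows essentially the same route as the paper: both proofs reduce the statement to the preceding theorem by showing $u(a)=0$ for every $a\in K$, using $a\vee\lnot a=1$ (point 2 of Theorem \ref{teoBA}) together with axiom 2 and the value $\overline{v}(a\wedge\lnot a)=(0,1)$ to get $\overline{v}(a)_0+\overline{v}(a)_1=1$. The only cosmetic difference is that you justify $\overline{v}(a\wedge\lnot a)=(0,1)$ via $a\wedge\lnot a=0$ and axiom 1, whereas the paper obtains it from axiom 3 applied to $\overline{v}(a\vee\lnot a)=(1,0)$; both are valid.
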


\begin{proof}
We show that $u(a)=0$ for all $a\in K$.
In fact, if $a\in K$ then $a\vee \lnot a = 1$, by point 2) of theorem \ref{teoBA},
and so $\overline{v}(a\vee \lnot a)=(1,0)$, by point 1) of theorem \ref{teopropr_v}, and
$\overline{v}(a\wedge \lnot a)=(0,1)$, by axiom 3. Thus 
\begin{equation*}
1=\overline{v}(a\vee \lnot a)_{0}=\overline{v}(a)_{0}+\overline{v}(\lnot
a)_{0}-\overline{v}(a\wedge \lnot a)_{0}=\overline{v}(a)_{0}+\overline{v}%
(\lnot a)_{0}=\overline{v}(a)_{0}+\overline{v}(a)_{1}
\end{equation*}
and $u(a)=0$. So $K$ is linearly ordered by the theorem above.
\end{proof}

\section{Partial isotone valuations}

We say that a partial valuation $\overline{v}$ on a DMF-algebra $\mathcal{A}$
is isotone if $a\leq b$ implies $\overline{v}(a)\preceq \overline{v}(b)$,
for all $a$, $b\in A$. We show that if $\nabla $ of $\mathcal{A}$
is a Boolean algebra, then every partial valuation on $\mathcal{A}$ is
isotone.

\begin{lemma}
If $\mathcal{A}$ is a DMF-algebra and $\nabla $ of $\mathcal{A}$ is
a Boolean algebra, then every partial valuation on $\mathcal{A}$ is isotone
on $\nabla $.
\end{lemma}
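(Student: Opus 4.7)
The plan is to leverage two earlier results: first, theorem \ref{teom2}, which extracts from any partial valuation $\overline{v}$ a lattice valuation $v$ on $\nabla_{\mathcal{A}}$ via $v(x)=\overline{v}(x)_0$; second, theorem \ref{teoboolval}, which guarantees that every Boolean valuation is isotone. So the strategy is: use the hypothesis that $\nabla$ is a Boolean algebra to upgrade $v$ to a Boolean valuation, then transfer the isotonicity of $v$ back to $\overline{v}$ on $\nabla$.

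First I would pick $a,b\in\nabla$ with $a\leq b$ and apply theorem \ref{teom2} to obtain a bounded lattices valuation $v:\nabla_{\mathcal{A}}\to[0,1]$ with $v(x)=\overline{v}(x)_0$. Since $\nabla$ is assumed Boolean, $v$ is in fact a Boolean valuation, so by theorem \ref{teoboolval} it is isotone, giving $\overline{v}(a)_0=v(a)\leq v(b)=\overline{v}(b)_0$. This handles the first coordinate.

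The second coordinate is where the hypothesis $a,b\in\nabla$, i.e.\ $n\leq a$ and $n\leq b$, does the work directly through axiom 4 of partial valuations: $(0,0)\preccurlyeq\overline{v}(a)$ and $(0,0)\preccurlyeq\overline{v}(b)$. Unpacking the definition of $\preccurlyeq$ together with the fact that partial values live in $T\subseteq[0,1]^2$ forces $\overline{v}(a)_1=\overline{v}(b)_1=0$, so trivially $\overline{v}(b)_1\leq\overline{v}(a)_1$. Combining the two coordinate-wise inequalities yields $\overline{v}(a)\preccurlyeq\overline{v}(b)$, which is exactly isotonicity on $\nabla$.

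I do not foresee any real obstacle here: once one notices that on $\nabla$ the negative coordinate of $\overline{v}$ automatically vanishes (by axiom 4), the problem collapses onto the positive coordinate, where the Boolean hypothesis on $\nabla$ lets the classical result of theorem \ref{teoboolval} apply verbatim. The only thing to be slightly careful about is that theorem \ref{teoboolval}, as stated, refers to Boolean \emph{valued} algebras in the sense of $v(0)=0$, $v(1)=1$, which is precisely what theorem \ref{teom2} delivers on $\nabla$ (recalling that $0^{\nabla_{\mathcal{A}}}=n$ and $1^{\nabla_{\mathcal{A}}}=1$).
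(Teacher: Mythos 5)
Your proof is correct, but it is organized differently from the paper's. The paper argues directly in the pair-valued setting: given $x\leq y$ in $\nabla$, it uses the complement $x^{*}$ in the Boolean algebra $\nabla$ to produce $z=y\wedge x^{*}$ with $x\vee z=y$ and $x\wedge z=n=0^{\nabla}$, then applies axiom 2 and $\overline{v}(n)=(0,0)$ to get $\overline{v}(y)=\overline{v}(x)+\overline{v}(z)$, and concludes because elements of $\nabla$ have valuations of the form $(q,0)$. You instead project onto the first coordinate via theorem \ref{teom2}, observe that the resulting bounded lattices valuation on $\nabla$ is by definition a Boolean valuation once $\nabla$ is assumed Boolean, and invoke the isotonicity clause of theorem \ref{teoboolval}; the second coordinate is disposed of by axiom 4, exactly as in the paper's closing step. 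The underlying mechanism is the same (both ultimately rest on the disjoint decomposition $y=x\vee(y\wedge x^{*})$ available in a Boolean algebra), but your version is more modular: it reuses the classical scalar result instead of re-deriving it for $T$-valued functions, at the small cost of having to check that theorem \ref{teom2}'s valuation really does qualify as a Boolean valuation on $\nabla$ (with $0^{\nabla}=n$, $1^{\nabla}=1$) — a point you correctly flag and settle. Both arguments are complete; yours arguably makes clearer why the Boolean hypothesis on $\nabla$ is exactly what is needed.
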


\begin{proof}
We suppose that $x$, $y\in \nabla $ and $x\leq y$. We denote with $x^{\circ }
$ the complement of $x$ in the Boolean algebra $\nabla $ and set $z=y\wedge
x^{\ast }$. Then 
\begin{equation*}
x\vee z=(x\vee y)\wedge (x\vee x^{\ast })=(x\vee y)\wedge 1=x\vee y=y
\end{equation*}
and 
\begin{equation*}
x\wedge z=y\wedge (x\wedge x^{\ast })=y\wedge n=n.
\end{equation*}
Thus 
\begin{equation*}
\overline{v}(y)=\overline{v}(x\vee z)
=\overline{v}(x)+\overline{v}(z)-\overline{v}(x\wedge z)
=\overline{v}(x)+\overline{v}(z)-\overline{v}(n)
=\overline{v}(x)+\overline{v}(z).
\end{equation*}
As $x$, $z\in \nabla $, there are $q$, $t\in \lbrack 0,1]$ such that $%
\overline{v}(x)=(q,0)$ and $\overline{v}(z)=(t,0)$, so $\overline{v}%
(y)=(q+t,0)$ and $\overline{v}(x)\preccurlyeq \overline{v}(y)$.
\end{proof}

\begin{lemma}
If $\mathcal{A}$ is a DMF-algebra, $\overline{v}$ is a partial valuation on $%
\mathcal{A}$ and $\overline{v}$ is isotone on $\nabla $, then $\overline{v}$
is isotone on $\nabla \cup \Delta $.
\end{lemma}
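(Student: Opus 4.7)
The plan is to reduce the statement to the hypothesis (isotonicity on $\nabla$) by a small case analysis, taking advantage of the identities $\nabla = [n,1]$ and $\Delta = [0,n]$ established earlier, together with axiom 3 of partial valuations (which says $\overline{v}(\lnot a) = \sigma(\overline{v}(a))$) and the order-reversing property $(x,y) \preccurlyeq (w,z) \Leftrightarrow \sigma(w,z) \preccurlyeq \sigma(x,y)$ of $\sigma$ recalled before Theorem \ref{teopropr_v}.

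Suppose $a \leq b$ with $a, b \in \nabla \cup \Delta$. I would split into four cases according to the membership of $a$ and $b$. If both $a, b \in \nabla$, the conclusion is immediate by hypothesis. If $a \in \Delta$ and $b \in \nabla$, i.e.\ $a \leq n \leq b$, then point 3 of Theorem \ref{teopropr_v} gives $\overline{v}(a) \preccurlyeq (0,0)$ and axiom 4 gives $(0,0) \preccurlyeq \overline{v}(b)$, so $\overline{v}(a) \preccurlyeq \overline{v}(b)$ by transitivity of $\preccurlyeq$. The case $a \in \nabla$, $b \in \Delta$ forces $n \leq a \leq b \leq n$, hence $a = b = n$ and there is nothing to prove.

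The only nontrivial step is to transfer isotonicity from $\nabla$ to $\Delta$. If $a, b \in \Delta$ with $a \leq b$, then $\lnot b \leq \lnot a$, and since $\Delta = [0,n]$ implies $\lnot a, \lnot b \in [n,1] = \nabla$, the hypothesis yields $\overline{v}(\lnot b) \preccurlyeq \overline{v}(\lnot a)$. Applying axiom 3 this rewrites as $\sigma(\overline{v}(b)) \preccurlyeq \sigma(\overline{v}(a))$, and applying $\sigma$ to both sides (which reverses $\preccurlyeq$) together with $\sigma \sigma = \mathrm{id}$ delivers $\overline{v}(a) \preccurlyeq \overline{v}(b)$, as required.

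The main (and only) obstacle is this last case, and it is handled by the symmetry $\sigma$ that is built into axiom 3: negation is an order-reversing bijection between $\Delta$ and $\nabla$, and its effect on values is exactly the order-reversing map $\sigma$ on $T$, so any isotonicity statement on $\nabla$ immediately mirrors to one on $\Delta$.
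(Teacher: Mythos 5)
Your proof is correct and follows essentially the same route as the paper's: the same four-case split on membership in $\nabla$ and $\Delta$, the same use of axiom 4 and point 3 of Theorem \ref{teopropr_v} for the mixed case, and the same transfer of isotonicity from $\nabla$ to $\Delta$ via negation and the order-reversing property of $\sigma$. No gaps.
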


\begin{proof}
We suppose that $x\leq y$ and distinguish four cases.

\begin{enumerate}
\item  If $x$, $y\in \nabla $ then $\overline{v}(x)\preccurlyeq \overline{v}%
(y)$ by hypothesis.

\item  If $x$, $y\in \Delta $ then $\lnot x$, $\lnot y\in \nabla $ and $%
\lnot y\leq \lnot x$ so $\overline{v}(\lnot y)\preccurlyeq \overline{v}%
(\lnot x)$ by the preceding point, thus $\sigma (\overline{v}%
(y))\preccurlyeq \sigma (\overline{v}(x))$ and $\overline{v}(x)\preccurlyeq 
\overline{v}(y)$.

\item  If $x\in \Delta $ and $y\in \nabla $ then $x\leq n\leq y$. By axiom 4
and point 3 of theorem \ref{teopropr_v}, we have $\overline{v}%
(x)\preccurlyeq (0,0)\preccurlyeq \overline{v}(y)$.

\item  If $y\in \Delta $ and $x\in \nabla $ then $y\leq n\leq x$. By
hypothesis $y\leq x$, so $x=y$ and $\overline{v}(x)=\overline{v}(y)$.
\end{enumerate}
\end{proof}

\begin{lemma}
If $\mathcal{A}$ is a DMF-algebra, $\overline{v}$ is a partial valuation on $%
\mathcal{A}$ and $\overline{v}$ is isotone on $\nabla $, then $\overline{v}$
is isotone on $\mathcal{A}$.
\end{lemma}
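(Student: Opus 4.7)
The plan is to reduce the general case to the already-established case of $\nabla \cup \Delta$ via the canonical decomposition of any element $a$ into its positive part $a \vee n \in \nabla$ and its negative part $a \wedge n \in \Delta$. Given arbitrary $x \leq y$ in $\mathcal{A}$, the order is preserved under meet and join with $n$, so I would first observe that $x \vee n \leq y \vee n$, with both elements in $\nabla$, and $x \wedge n \leq y \wedge n$, with both elements in $\Delta$. By the preceding lemma, $\overline{v}$ is already known to be isotone on $\nabla \cup \Delta$, so we get $\overline{v}(x \vee n) \preccurlyeq \overline{v}(y \vee n)$ and $\overline{v}(x \wedge n) \preccurlyeq \overline{v}(y \wedge n)$.

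The second step is to recombine these two pieces using point 4 of theorem \ref{teopropr_v}, which gives $\overline{v}(a) = \overline{v}(a \wedge n) + \overline{v}(a \vee n)$ for every $a$. Applied to $x$ and $y$, this yields
\begin{equation*}
\overline{v}(x) = \overline{v}(x \wedge n) + \overline{v}(x \vee n), \qquad \overline{v}(y) = \overline{v}(y \wedge n) + \overline{v}(y \vee n).
\end{equation*}
To conclude $\overline{v}(x) \preccurlyeq \overline{v}(y)$, the only remaining ingredient is the monotonicity of addition with respect to $\preccurlyeq$: if $(a,b) \preccurlyeq (c,d)$ and $(a',b') \preccurlyeq (c',d')$, then componentwise $a + a' \leq c + c'$ and $d + d' \leq b + b'$, so $(a,b) + (a',b') \preccurlyeq (c,d) + (c',d')$. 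This is an immediate consequence of the definition of $\preccurlyeq$.

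The argument is essentially routine once the decomposition is set up; I do not expect a genuine obstacle. The only point that deserves mild care is to keep track of the signature: the values $\overline{v}(x \vee n)$ and $\overline{v}(y \vee n)$ have second coordinate $0$ (by axiom 4), while $\overline{v}(x \wedge n)$ and $\overline{v}(y \wedge n)$ have first coordinate $0$ (by point 3 of theorem \ref{teopropr_v}), so the sums lie in $T$ and the two inequalities provided by the preceding lemma combine cleanly coordinate by coordinate, giving the desired $\overline{v}(x) \preccurlyeq \overline{v}(y)$.
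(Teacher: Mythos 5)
Your proof is correct and follows essentially the same route as the paper's: decompose via $x\vee n$ and $x\wedge n$, invoke the preceding lemma for isotonicity on $\nabla\cup\Delta$, and recombine with point 4 of theorem \ref{teopropr_v} using the isotonicity of $+$ on $T$ with respect to $\preccurlyeq$. The extra remarks you add (verifying monotonicity of addition coordinatewise, and tracking which coordinates vanish) only make explicit what the paper leaves implicit.
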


\begin{proof}
If $x\leq y$, then $x\vee n\leq y\vee n$ and $x\wedge n\leq y\wedge n$. As $%
\overline{v}$ is isotone on $\nabla $, it is also isotone on $\nabla \cup
\Delta $, by the preceding lemma, so $\overline{v}(x\vee n)\preccurlyeq 
\overline{v}(y\vee n)$ and $\overline{v}(x\wedge n)\preccurlyeq \overline{v}%
(y\wedge n)$. As $+$ is isotone on $T$ with respect to \ $\preccurlyeq $, 
\begin{equation*}
\overline{v}(x\vee n)+\overline{v}(x\wedge n)\preccurlyeq \overline{v}(y\vee
n)+\overline{v}(y\wedge n),
\end{equation*}
thus $\overline{v}(x)\preccurlyeq \overline{v}(y)$ by point 4 of theorem \ref
{teopropr_v}
\end{proof}

\begin{theorem}
\label{teovalisot}If $\mathcal{A}$ is a DMF-algebra and $\nabla $
of $\mathcal{A}$ is a Boolean algebra, then every partial valuation on $%
\mathcal{A}$ is isotone on the whole $\mathcal{A}$.
\end{theorem}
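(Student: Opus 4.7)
The theorem is essentially the conjunction of the three preceding lemmas, so my plan is simply to chain them together and verify that no further work is required.

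First, I would invoke the first lemma: since $\nabla$ of $\mathcal{A}$ is by hypothesis a Boolean algebra, any partial valuation $\overline{v}$ on $\mathcal{A}$ is already known to be isotone when restricted to $\nabla$. This is exactly the hypothesis needed to feed into the subsequent lemmas.

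Next, I would apply the third lemma, which states that isotonicity of $\overline{v}$ on $\nabla$ is enough to conclude isotonicity of $\overline{v}$ on the whole of $\mathcal{A}$. (The second lemma is not directly invoked here but is used internally in the proof of the third lemma, which decomposes an arbitrary element $x$ as $(x\vee n)\wedge$-\,pattern with $(x\wedge n)$, lifts the inequality via the isotonicity of $+$ on $T$ with respect to $\preccurlyeq$, and then reassembles using point 4 of Theorem \ref{teopropr_v}.)

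Since both ingredients are already in place, the proof is a one-line citation: apply the first lemma to obtain isotonicity on $\nabla$, then apply the third lemma to extend it to $\mathcal{A}$. There is no real obstacle here — the content of the theorem has been fully unpacked into the three preceding lemmas, and this statement is just the bookkeeping that stitches them together into the headline result.
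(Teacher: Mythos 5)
Your proof is correct and matches the paper's own, which likewise just chains the lemmas (the paper's proof reads only ``The theorem follows from the above lemmas''). Your observation that the second lemma enters only indirectly, through the proof of the third, is accurate.
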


\begin{proof}
The theorem follows from the above lemmas.
\end{proof}

As a consequence of this theorem, every measure of partial probability $\mu $
on a partial field of set $\mathcal{G}_{S}$ on $S$ is isotone, when $\nabla
_{\mathcal{G}_{S}}$ is a Boolean algebra: in particular, $\mu $ is isotone
when $\mathcal{G}_{S}=\mathcal{D}(S)$, because $\nabla _{\mathcal{D}(S)}=%
\mathcal{P}(S)$.

\section{Relativized partial valuations\label{parrpv}}

In every bounded lattice $\mathcal{A}$, we can associate to every $a\in A$
the relativization $f_{a}:\mathcal{A}\rightarrow \lbrack 0,a]$ defined by $%
f_{a}(x)=a\wedge x$. If $\mathcal{A}$ is a Boolean algebra, the
relativization is a Boolean omomorphism. As we have seen in theorem \ref
{teorelativizedval}, we can associate to every valued Boolean algebra $(%
\mathcal{A},v)$ and to every $a\in A$ such that $v(a)\neq 0$, a relativized
valuation $v_{a}$ on $[0,a]$.

If we want to do the same thing in the context\ of partial valuations on
DMF-algebras, we are compelled to give a new definition of relativization
because $f_{a}$ is no more a morphism. Firstly we observe that, in every
distributive lattice $\mathcal{A}$, if $a\leq b$ we can define a morphism $%
f_{b}^{a}:A\rightarrow \lbrack a,b]$ by 
\begin{equation*}
f_{b}^{a}(x)=(x\vee a)\wedge b.
\end{equation*}
(An equivalent definition is $f_{b}^{a}(x)=(x\wedge b)\vee a$, because by
modularity $a\leq b$ implies $(x\vee a)\wedge b=(x\wedge b)\vee a$.) It can
be easily seen that $f_{b}^{a}$ preserves $\wedge $ and $\vee $. If $%
\mathcal{A}$ is bounded then $f_{b}^{a}$ preserves also $0$ and $1$. We call 
$f_{b}^{a}$ the \textit{relativization associated to} $[a,b]$. The following
theorem gives a sufficient condition for the existence of a relativization
in a DMF-algebra.

\begin{theorem}
\label{teorelpar}If $\mathcal{A}$ is a DMF-algebra and $\lnot a\leq a$, then 
$[\lnot a,a]$ can be expanded to a DMF-algebra $\mathcal{B}$ and the
relativization $f_{a}^{\lnot a}$associated to $[\lnot a,a]$ is a morphism of
DMF-algebras from $\mathcal{A}$ to $\mathcal{B}$.
\end{theorem}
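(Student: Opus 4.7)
The plan is to expand $[\lnot a, a]$ to a DMF-algebra $\mathcal{B}$ by inheriting everything from $\mathcal{A}$: meet and join as in the sublattice, with bounds $0^{\mathcal{B}}=\lnot a$ and $1^{\mathcal{B}}=a$, and with negation and fixed point simply restricted from $\mathcal{A}$, namely $\lnot^{\mathcal{B}}=\lnot\!\upharpoonright[\lnot a,a]$ and $n^{\mathcal{B}}=n$.

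Two sanity checks are needed before anything else. First, the interval is closed under $\lnot$: if $\lnot a\leq x\leq a$, then applying the order-reversing involution $\lnot$ gives $\lnot a\leq \lnot x\leq a$. Second, $n$ lies in $[\lnot a,a]$: since $\lnot a\leq a$ we have $a=a\vee\lnot a\in\nabla=[n,1]$, hence $n\leq a$; applying $\lnot$ gives $\lnot a\leq n$. With these in place, verifying the DMF axioms for $\mathcal{B}$ is just bookkeeping: bounded distributivity is inherited from $\mathcal{A}$; double negation, de Morgan, normality, and $\lnot n=n$ all hold in $\mathcal{B}$ because $\lnot^{\mathcal{B}}$ and $n^{\mathcal{B}}$ are literal restrictions of $\lnot$ and $n$ and these identities already hold universally in $\mathcal{A}$.

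Next I would check that $f_a^{\lnot a}$ is a DMF-morphism from $\mathcal{A}$ to $\mathcal{B}$. Preservation of $\wedge,\vee,0,1$ is already noted above for the generic relativization on a bounded distributive lattice. Preservation of $n$ reduces to $f_a^{\lnot a}(n)=(n\vee\lnot a)\wedge a=n\wedge a=n$, using $\lnot a\leq n\leq a$. The only nontrivial point is preservation of $\lnot$, which I would verify by distributivity together with $\lnot a\wedge a=\lnot a$: on one side
$$f_a^{\lnot a}(\lnot x)=(\lnot x\vee\lnot a)\wedge a=(\lnot x\wedge a)\vee(\lnot a\wedge a)=(\lnot x\wedge a)\vee\lnot a,$$
and on the other
$$\lnot^{\mathcal{B}}(f_a^{\lnot a}(x))=\lnot\bigl((x\vee\lnot a)\wedge a\bigr)=(\lnot x\wedge\lnot\lnot a)\vee\lnot a=(\lnot x\wedge a)\vee\lnot a,$$
so the two expressions agree.

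Insofar as there is a main obstacle, it is only this last calculation; the rest of the argument is a direct transfer of the DMF identities of $\mathcal{A}$ to the sub-interval, together with the crucial observation $\lnot a\leq n\leq a$ that makes $[\lnot a,a]$ a natural home for the restricted fixed point and negation.
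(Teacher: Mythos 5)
Your proof is correct and follows essentially the same route as the paper: inherit the lattice structure and $\lnot$, $n$ on $[\lnot a,a]$, note closure under $\lnot$ and $\lnot a\leq n\leq a$, appeal to the equational character of the DMF axioms, and check preservation of $\lnot$ and $n$ by the same distributivity computation. (Incidentally, your assignment $0^{\mathcal{B}}=\lnot a$, $1^{\mathcal{B}}=a$ is the right one; the paper's proof states these two the other way around, evidently a typo.)
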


\begin{proof}
We define a structure $\mathcal{B}$ of domain $[\lnot a,a]$ setting $\wedge ^{%
\mathcal{B}}=\wedge $, $\vee ^{\mathcal{B}}=\vee $, $\lnot ^{\mathcal{B}%
}=\lnot $, $0^{\mathcal{B}}=a$, $1^{\mathcal{B}}=\lnot a$ and $n^{\mathcal{B}%
}=n$. We observe only that $[\lnot a,a]$ is closed with respect to $\lnot $,
because $\lnot a\leq x\leq a$ implies $\lnot a\leq \lnot x\leq a$, and $n\in 
$ $[\lnot a,a]$ because $\lnot a=a\wedge \lnot a\leq n\leq a\vee \lnot a=a$,
by the normality axiom. As DMF-algebra axioms are equational, they are
inherited by $\mathcal{B}$.

It is immediate to verify that $f_{a}^{\lnot a}$ preserves $\wedge $, $\vee $%
, $0$ and $1$. $f_{a}^{\lnot a}$ preserves $\lnot $: 
\begin{equation*}
\lnot (f_{a}^{\lnot a}(x))=\lnot ((x\vee \lnot a)\wedge a) 
=(\lnot x\wedge a)\vee \lnot a 
=(\lnot x\vee \lnot a)\wedge a 
=f_{a}^{\lnot a}(\lnot x).
\end{equation*}
Finally $f_{a}^{\lnot a}$ preserves $n$, because $\lnot a\leq n\leq a$
implies $f_{a}^{\lnot a}(n)=(n\vee \lnot a)\wedge a=n$.
\end{proof}

Now we can introduce relativized partial valuations on DMF-algebras. Let $%
\mathcal{A}$ be a DMF-algebra and let $h\in \nabla$.
We observe that $h\in \nabla $ is equivalent to $\lnot h\leq h$.
Let $\overline{v}$ be an isotone partial valuation on $\mathcal{A}$ 
such that $\overline{v}(h)_{0}\neq 0$. 
We define $\overline{v}_{h}:[\lnot h,h]\rightarrow R^{2}$ setting 
\begin{equation*}
\overline{v}_{h}(x)=\overline{v}(x)\cdot \frac{1}{\overline{v}(h)_{0}}.
\end{equation*}
We call $\overline{v}_{h}$ the \textit{relativized partial valuation}
associated to $\overline{v}$ and $h$. The following theorem shows that $%
\overline{v}_{h}$ is a partial valuation on $[\lnot h,h]$, indeed.

\begin{lemma}
If $\overline{v}$ is an isotone partial valuation on $\mathcal{A}$ and $h\in
\nabla$, then $\lnot h\leq x\leq h$ implies $\overline{v%
}(x)_{0}+\overline{v}(x)_{1}\leq \overline{v}(h)_{0}$.
\end{lemma}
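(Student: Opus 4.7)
The plan is to reduce the desired inequality to a comparison in $\nabla$, where isotony applies directly to first coordinates. The key observation is that the condition $\lnot h \leq x \leq h$, together with De Morgan's law, gives both $x \leq h$ and $\lnot x \leq h$, so that $x \vee \lnot x \leq h$. Since $h \in \nabla$ and $x \vee \lnot x \in \nabla$ by definition, this inequality lives entirely inside $\nabla$, where I can exploit isotony cleanly.

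Next I would rewrite $\overline{v}(x)_0 + \overline{v}(x)_1$ as a value at a single $\nabla$-element. By point 5 of theorem \ref{teopropr_v}, $\overline{v}(x)_0 = \overline{v}(x \vee n)_0$ and $\overline{v}(x)_1 = \overline{v}(\lnot x \vee n)_0$. Both $x \vee n$ and $\lnot x \vee n$ lie in $\nabla$, and I would compute
\begin{equation*}
(x \vee n) \vee (\lnot x \vee n) = x \vee \lnot x,
\qquad
(x \vee n) \wedge (\lnot x \vee n) = (x \wedge \lnot x) \vee n = n,
\end{equation*}
where the first uses $n \leq x \vee \lnot x$ (normality), and the second uses distributivity followed by $x \wedge \lnot x \leq n$. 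Applying axiom 2 of partial valuations and $\overline{v}(n) = (0,0)$ (point 2 of theorem \ref{teopropr_v}) then gives $\overline{v}(x \vee n) + \overline{v}(\lnot x \vee n) = \overline{v}(x \vee \lnot x)$; reading off first coordinates yields
\begin{equation*}
\overline{v}(x)_0 + \overline{v}(x)_1 = \overline{v}(x \vee \lnot x)_0.
\end{equation*}

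Finally, since $x \vee \lnot x \leq h$ and $\overline{v}$ is isotone by hypothesis, $\overline{v}(x \vee \lnot x) \preceq \overline{v}(h)$. Because both $x \vee \lnot x$ and $h$ belong to $\nabla$, axiom 4 combined with membership in $T$ forces $\overline{v}(x \vee \lnot x)_1 = \overline{v}(h)_1 = 0$, so the $\preceq$-comparison collapses to the first-coordinate inequality $\overline{v}(x \vee \lnot x)_0 \leq \overline{v}(h)_0$. Combining this with the identity in the previous paragraph completes the proof.

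I expect no serious obstacle: the only subtle point is noticing that the two coordinates of $\overline{v}(x)$ can be glued back together via axiom 2 applied to $(x \vee n)$ and $(\lnot x \vee n)$, turning an estimate about the sum of coordinates into an estimate about a single $\nabla$-element, where isotony takes over.
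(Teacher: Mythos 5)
Your proof is correct and follows essentially the same route as the paper's: decompose $\overline{v}(x)_0+\overline{v}(x)_1$ via $(x\vee n)$ and $(\lnot x\vee n)$ using point 5 of theorem \ref{teopropr_v} and axiom 2, then apply isotony to $x\vee\lnot x\leq h$. The only difference is cosmetic: the final appeal to axiom 4 to kill the second coordinates is unnecessary, since the definition of $\preceq$ already yields the first-coordinate inequality directly.
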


\begin{proof}
From $\lnot h\leq x\leq h$ we have $\lnot h\leq \lnot x\leq h$ and then $%
x\vee \lnot x\leq h$. By the normality axiom we have both $(x\vee n)\vee
(\lnot x\vee n)=x\vee \lnot x$ and $(x\vee n)\wedge (\lnot x\vee n)=n$. As $%
\overline{v}$ is isotone, 
\begin{eqnarray*}
\overline{v}(h)\succcurlyeq \overline{v}(x\vee \lnot x) 
=\overline{v}((x\vee n)\vee (\lnot x\vee n)) 
&=&\overline{v}(x\vee n)+\overline{v}(\lnot x\vee n)-\overline{v}(n) \\
&=&\overline{v}(x\vee n)+\overline{v}(\lnot x\vee n).
\end{eqnarray*}
By point 5 of theorem \ref{teopropr_v} we have $\overline{v}(x)=(\overline{v}%
(x\vee n)_{0},\overline{v}(\lnot x\vee n)_{0})$ and so 
\begin{equation*}
\overline{v}(x)_{0}+\overline{v}(x)_{1}=\overline{v}(x\vee n)_{0}+\overline{v%
}(\lnot x\vee n)_{0}
\end{equation*}
and finally $\overline{v}(h)_{0}\geq \overline{v}(x\vee n)_{0}+\overline{v}%
(\lnot x\vee n)_{0}=\overline{v}(x)_{0}+\overline{v}(x)_{1}$.
\end{proof}

\begin{theorem}
\label{teovalparrel}If $\overline{v}$ is an isotone partial valuation on 
$\mathcal{A}$, $h\in \nabla$ and $\overline{v}%
(h)_{0}\neq 0$, then $\overline{v}_{h}$ is a function from $[\lnot h,h]$ to $%
T$ that satisfies the axioms of partial valuation.
\end{theorem}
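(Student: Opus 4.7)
The plan is to verify in turn: (i) that $\overline{v}_h$ has codomain $T$, and (ii) each of the four axioms of partial valuation on the relativized DMF-algebra $\mathcal{B}$ with domain $[\lnot h,h]$, as produced by theorem \ref{teorelpar} (so $0^{\mathcal{B}}=\lnot h$, $1^{\mathcal{B}}=h$, $n^{\mathcal{B}}=n$).

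For (i), non-negativity of both coordinates is automatic, since $\overline{v}(x)\in T\subseteq[0,1]^2$ and $\overline{v}(h)_0>0$. The condition $\overline{v}_h(x)_0+\overline{v}_h(x)_1\leq 1$ is exactly what the preceding lemma gives: dividing $\overline{v}(x)_0+\overline{v}(x)_1\leq\overline{v}(h)_0$ by $\overline{v}(h)_0$ yields the inequality. This is the only real work, and it has been isolated in the lemma.

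For (ii), the verification is essentially mechanical scaling. Axiom 2 and axiom 3 follow by dividing the corresponding identities for $\overline{v}$ by $\overline{v}(h)_0$; note that $\sigma$ commutes with multiplication by a positive scalar, so $\overline{v}_h(\lnot a)=\sigma(\overline{v}(a))/\overline{v}(h)_0=\sigma(\overline{v}_h(a))$. Axiom 4 is immediate: $n\leq a$ gives $(0,0)\preccurlyeq\overline{v}(a)$, and dividing by a positive scalar preserves $\preccurlyeq$. The one step that needs a small extra argument is axiom 1, i.e. $\overline{v}_h(0^{\mathcal{B}})=\overline{v}_h(\lnot h)=(0,1)$. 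Since $h\in\nabla$ we have $n\leq h$, so by axiom 4 for $\overline{v}$ we get $\overline{v}(h)_1=0$; hence $\overline{v}(\lnot h)=\sigma(\overline{v}(h))=(0,\overline{v}(h)_0)$, and dividing by $\overline{v}(h)_0$ yields $(0,1)$.

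The main (only) obstacle is keeping track of why $\overline{v}_h$ lands in $T$, which is precisely the content of the preceding lemma; once that is in hand, every remaining clause reduces to dividing a known identity by the positive constant $\overline{v}(h)_0$.
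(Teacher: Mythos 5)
Your proof is correct and follows essentially the same route as the paper: the lemma supplies the codomain condition, and the four axioms follow by dividing the corresponding identities for $\overline{v}$ by the positive constant $\overline{v}(h)_{0}$. Your treatment of the codomain is in fact slightly tidier than the paper's, since you observe that non-negativity together with the sum bound $\overline{v}_h(x)_0+\overline{v}_h(x)_1\leq 1$ already forces each coordinate into $[0,1]$, whereas the paper proves the individual bounds separately from isotonicity.
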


\begin{proof}
Firstly we show that $\overline{v}_{h}$ takes values in $T$. We suppose $%
h\leq x\leq \lnot h$ and $\overline{v}(x)=(q,t)$. Then $\overline{v}%
_{h}(x)=(q,t)\cdot 1/\overline{v}(h)_{0}$. We have to show that:

\begin{description}
	\item[i)] $0\leq 
	\frac{q}{\overline{v}(h)_{0}},\frac{t}{\overline{v}(h)_{0}} \leq 1,$
	\item[ii)] $0\leq \frac{q+t}{\overline{v}(h)_{0}}\leq 1$.
\end{description}

i). By hypothesis $0<\overline{v}(h)_{0}$ and $0\leq q,t$, so we have both $%
0\leq \frac{q}{\overline{v}(h)_{0}}$ and $0\leq \frac{t}{\overline{v}(h)_{0}}
$. As $\overline{v}$ is isotone, $x\leq h$ we have $\overline{v}%
(x)\preccurlyeq \overline{v}(h)$ and $q\leq \overline{v}(h)_{0}$. As $\lnot
h\leq x$, we have $\lnot x\leq h$, so $\overline{v}(\lnot x)\preccurlyeq 
\overline{v}(h)$ and then $(t,q)=\sigma (\overline{v}(x))\preccurlyeq m(h)$
and finally $t\leq \overline{v}(h)_{0}$. In conclusion, $q/\overline{v}%
(h)_{0}$, $t/\overline{v}(h)_{0}\leq 1$.

ii). By the above lemma, $q+t\leq \overline{v}(h)_{0}$ and so $(q+t)/\overline{%
v}(h)_{0}\leq 1$. On the other side, from $0\leq q$, $t$ we have $0\leq q+t$
and $0\leq (q+t)/\overline{v}(h)_{0}$, because $\overline{v}(h)_{0}$ is
positive.

We expand $[\lnot h,h]$ to a DMF-algebra $\mathcal{B}$ as in theorem \ref{teorelpar}
and prove that $\overline{v}_{h}$ is a partial valuation on $\mathcal{B}$.

Axiom 1. 
\begin{equation*}
\overline{v}_{h}(0^{\mathcal{B}})=\overline{v}_{h}(\lnot h) 
=\overline{v}(\lnot h)\cdot \frac{1}{\overline{v}(h)_{0}} 
=\sigma (\overline{v}(h))\cdot \frac{1}{\overline{v}(h)_{0}} 
=(\frac{\overline{v}(h)_{1}}{\overline{v}(h)_{0}},\frac{\overline{v}(h)_{0}}{%
\overline{v}(h)_{0}}) 
=(0,1),
\end{equation*}
because $0<\overline{v}(h)_{0}$ holds by hypothesis and $\overline{v}%
(h)_{1}=0$ follows from $h\in \nabla $ and, consequently, $(0,0)\preccurlyeq 
\overline{v}(h)$.

Axiom 2. 
\begin{equation*}
\overline{v}_{h}(x\vee y)=\overline{v}(x\vee y)\cdot \frac{1}{\overline{v}%
(h)_{0}} 
=\frac{\overline{v}(x)}{\overline{v}(h)_{0}}+\frac{\overline{v}(y)}{%
\overline{v}(h)_{0}}-\frac{\overline{v}(x\wedge y)}{\overline{v}(h)_{0}} 
=\overline{v}_{h}(x)+\overline{v}_{h}(y)+\overline{v}_{h}(x\wedge y).
\end{equation*}

Axiom 3. 
\begin{eqnarray*}
\overline{v}_{h}(\lnot x)&=&\overline{v}(\lnot x)\cdot \frac{1}{\overline{v}%
(h)_{0}}
=\sigma (\overline{v}(x))\cdot \frac{1}{\overline{v}(h)_{0}} 
=(\frac{\overline{v}(x)_{1}}{\overline{v}(h)_{0}},\frac{\overline{v}(x)_{0}}{%
\overline{v}(h)_{0}}) \\
&=&\sigma (\frac{\overline{v}(x)_{0}}{\overline{v}(h)_{0}},\frac{\overline{v}%
(x)_{1}}{\overline{v}(h)_{0}}) 
=\sigma (\overline{v}(x))\cdot \frac{1}{\overline{v}(h)_{0}}) 
=\sigma (\overline{v}_{h}(x)).
\end{eqnarray*}

Axiom 4. If $n\leq x$ then $(0,0)\preccurlyeq \overline{v}(x)$ and so $0\leq 
\overline{v}(x)_{0}$ and $0=\overline{v}(x)_{1}$, thus $0\leq \overline{v}%
(x)_{0}/\overline{v}(h)_{0}$ and $0=\overline{v}(x)_{1}/\overline{v}(h)_{0}$%
. Thus 
\begin{equation*}
(0,0)\preccurlyeq (\frac{\overline{v}(x)_{0}}{\overline{v}(h)_{0}},\frac{%
\overline{v}(x)_{1}}{\overline{v}(h)_{0}})=\overline{v}_{h}(x).
\end{equation*}
\end{proof}

\hfill

The above concepts can be interpreted in a probability context, defining the
partial valuation $\overline{v}$ on the DMF-algebra $\mathcal{D}(S)$. If we
choose an event $(H,H^{\prime })$ in $D(S)$ as a condition then, by the theorem above,
we can speak of the relativized partial valuation
$\overline{v}_{H,H^{\prime }}$
only if 
i) $\overline{v}$ is isotone,
ii) $(H,H^{\prime })\in \nabla$ and
iii) $\overline{v}(H,H^{\prime })_{0}>0$.
Condition i) is always satisfied by theorem \ref{teovalisot}. Condition ii) 
implies $H^{\prime }=\emptyset$. When these conditions are satisfied, 
we can define
a relativized partial valuation $\overline{v}_{H,\emptyset }$:$%
[(\emptyset ,H),(H,\emptyset )]\rightarrow T$ setting 
\begin{equation*}
\overline{v}_{H,\emptyset }(X,Y)=\overline{v}(X,Y)\cdot \frac{1}{\overline{v}%
(H,H^{\prime })_{0}}.
\end{equation*}
As in classical probability the assumption of a condition $H\subseteq S$
causes the passage from $\mathcal{P}(S)$ to the algebra of relativized
events $\mathcal{P}(H)$, so in partial probability theory the assumption of
$(H,\emptyset)$ causes the passage from $\mathcal{D}(S)$ to $\mathcal{D}(H)$%
, the algebra of all partial sets like $(X\cap H,Y\cap H)$, as $(X,Y)$
varies in $\mathcal{D}(S)$. As a consequence
of theorem \ref{teovalparrel}, we know that $\overline{v}_{H,\emptyset }$
satisfies the axioms of partial valuations introduced in par. \ref{parvalpar}. 

\section{Conditional partial valuations}

We remark that the existence of the relativized partial valuation $\overline{%
v}_{h}$ associated to $h$ depends on three conditions: i) $h$ must satisfy  
$h\in \nabla $ or, equivalently, $\lnot h\leq h$,
ii) $\overline{v}$ must be
isotone, iii) $h$ must satisfy $\overline{v}(h)_{0}\neq 0$. When all these
conditions are satisfied, we can define the \textit{conditional partial
valuation associated to} $v$ \textit{and} $h$ setting $\overline{v}(x|h)=%
\overline{v}_{h}(f_{h}^{\lnot h}(x))$. As $f_{h}^{\lnot h}$ is a morphism
from $\mathcal{A}$ to $[\lnot h,h]$ and $\overline{v}_{h}$ is a partial
valuation on $[\lnot h,h]$, the function $\overline{v}(-|h)$ is a partial
valuation on $\mathcal{A}$, by theorem \ref{teo333}.

If we confine ourselves to the domain of partial sets, we have
the conditional partial valuation
$\overline{v}(-|H,\emptyset):\mathcal{D}(S)\rightarrow T$ setting 
\begin{equation*}
\overline{v}(X,Y|H,\emptyset)=\overline{v}_{H,\emptyset }(f_{H,\emptyset
}^{\emptyset ,H}(X,Y)).
\end{equation*}
We call $\overline{v}(X,Y|H,\emptyset)$ the \textit{conditional partial
probability of} $(X,Y)$ \textit{with respect to} $(H,\emptyset)$. As a
consequence of theorem \ref{teo333}, $\overline{v}(-|H,\emptyset )=%
\overline{v}_{H,\emptyset }\circ f_{H,\emptyset }^{\emptyset ,H}$ is a partial
valuation on $\mathcal{D}(S)$ and a partial probability measure for partial
events.

Now we show two results about conditional partial valuations. The first is a weak form
of Bayes's Theorem. We can give a lattice-theoretic formulation of
Bayes's Theorem as follows.
Let $\mathcal{A}$ be a bounded lattice and $v$ a valuation on $\mathcal{A}$.
If $e$, $h\in A$ are such that $v(e)$, $v(h)\neq 0$, then we can consider
the conditional valuations $v(-|e)$ and $v(-|h)$ associated to $e$ and $h$.
Bayes's Theorem is the following fundamental relation between $v(-|e)$ and $%
v(-|h)$: 
\begin{equation*}
v(h|e)=v(e|h)\frac{v(h)}{v(e)}.
\end{equation*}
The proof is straightforward, because $v(h|e)v(e)=v(h\wedge e)=v(e\wedge
h)=v(e|h)v(h)$.

From an algebraic point of view, the role of the two conditional valuations $%
v(-|e)$ and $v(-|h)$ is perfectly symmetrical. The situation is slightly
different in probability theory, where $h$ and $e$ are to be understood
respectively as an hypothesis to be tested and an experimental evidence,
thus $v(h|e)$ is the `posterior probability' of the hypothesis $h$ and $%
v(e|h)$ is the `likelihood' of the hypothesis (the probability of the
hypothesis conditional on the data). $v(h)$ and $v(e)$ are respectively the
`prior probability' of the hypothesis and the probability of the data. 

A weak form of Bayes's Theorem is available for conditional
partial valuations, an then for partial probability.

\begin{theorem}
If $\overline{v}$ is isotone and $e$, $h\in \nabla $, with $\overline{v}%
(e)_{0}\neq 0$ and $\overline{v}(h)_{0}\neq 0$, then 
\begin{equation*}
\overline{v}(h|e)=\overline{v}(e|h)\frac{v(h)_{0}}{v(e)_{0}}.
\end{equation*}
\end{theorem}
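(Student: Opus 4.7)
My plan is to reduce the statement to the classical Bayes identity by showing that, under the given hypotheses, the conditional partial valuation $\overline{v}(h\mid e)$ is simply $\overline{v}(h\wedge e)$ rescaled by $1/\overline{v}(e)_{0}$. Once this is established, the symmetry $h\wedge e=e\wedge h$ and a single division do the rest, mirroring the classical chain $v(h\mid e)v(e)=v(h\wedge e)=v(e\wedge h)=v(e\mid h)v(h)$.

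First, by the definition of conditional partial valuation and of the relativization $f_{e}^{\neg e}$, I unpack
\[
\overline{v}(h\mid e)=\overline{v}_{e}\bigl(f_{e}^{\neg e}(h)\bigr)=\overline{v}\bigl((h\vee\neg e)\wedge e\bigr)\cdot\frac{1}{\overline{v}(e)_{0}}.
\]
Next I simplify $(h\vee\neg e)\wedge e$. By distributivity it equals $(h\wedge e)\vee(\neg e\wedge e)$. The hypothesis $e\in\nabla=[n,1]$ gives $\neg e\leq n\leq e$, hence $\neg e\wedge e=\neg e$. Furthermore, $h\in\nabla$ gives $n\leq h$, so combined with $n\leq e$ we have $n\leq h\wedge e$, and therefore $\neg e\leq n\leq h\wedge e$, which yields $(h\wedge e)\vee\neg e=h\wedge e$. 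Thus
\[
\overline{v}(h\mid e)=\overline{v}(h\wedge e)\cdot\frac{1}{\overline{v}(e)_{0}},
\]
and the symmetric computation (interchanging the roles of $e$ and $h$, using both hypotheses again) gives $\overline{v}(e\mid h)=\overline{v}(e\wedge h)\cdot\tfrac{1}{\overline{v}(h)_{0}}$.

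Finally, using $h\wedge e=e\wedge h$ I conclude
\[
\overline{v}(h\mid e)\cdot\overline{v}(e)_{0}=\overline{v}(h\wedge e)=\overline{v}(e\wedge h)=\overline{v}(e\mid h)\cdot\overline{v}(h)_{0},
\]
and dividing by $\overline{v}(e)_{0}$, which is nonzero by assumption, gives the weak Bayes identity.

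The only genuinely non-trivial step is the collapse $f_{e}^{\neg e}(h)=h\wedge e$. This is where the joint hypothesis $e,h\in\nabla$ pulls its weight: it forces the extra disjunct $\neg e$ introduced by the relativization map to be absorbed into $h\wedge e$. Without $h\in\nabla$ the meet $h\wedge e$ need not dominate $n$, the disjunct $\neg e$ is not swallowed, and one is left with an asymmetric residual term rather than a clean Bayes-like identity---which is presumably why only a \emph{weak} form of the theorem is available in the DMF setting.
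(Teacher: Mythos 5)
Your proof is correct and follows essentially the same route as the paper's: the whole content is the collapse $f_{e}^{\lnot e}(h)=h\wedge e=e\wedge h=f_{h}^{\lnot h}(e)$, which both you and the paper obtain from $\lnot e\leq n\leq h\wedge e$ (a consequence of $e,h\in\nabla$), after which the identity is a one-line rescaling. Your remark on why the hypothesis $h\in\nabla$ is needed to absorb the residual disjunct is a nice added explanation of why only a weak form holds.
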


\begin{proof}
As we have seen in par. \ref{parrpv}, we can speak of the conditional
partial valuation $\overline{v}(x|h)=\overline{v}_{h}(f_{h}^{\lnot h}(x))$
when the following three conditions are satisfied: $h\in \nabla $, ii) $%
\overline{v}$ is isotone, iii) $\overline{v}(h)_{0}\neq 0$, what is
guaranteed by our hypothesis. The same holds for $\overline{v}(x|e)=%
\overline{v}_{e}(f_{e}^{\lnot e}(x))$. So we have 
\begin{eqnarray*}
\overline{v}(h|e) &=&\overline{v}_{e}(f_{e}^{\lnot e}(h)) \\
&=&\overline{v}(f_{e}^{\lnot e}(h))\cdot \frac{1}{\overline{v}(e)_{0}} \\
&=&\overline{v}(f_{h}^{\lnot h}(e))\cdot \frac{1}{\overline{v}(e)_{0}} \\
&=&\overline{v}(e|h)\cdot \frac{\overline{v}(h)_{0}}{\overline{v}(e)_{0}}.
\end{eqnarray*}
The second line follows by definition of $\overline{v}_{e}$ in par. \ref
{parrpv}. The third line is justified because $e$, $h\in \nabla $ implies $%
f_{h}^{\lnot h}(e)=f_{e}^{\lnot e}(h)$: in fact, $f_{h}^{\lnot
h}(e)=(e\wedge h)\vee \lnot h=e\wedge h$, because $\lnot h\leq n\leq e\wedge
h$, and in the same way we have $f_{h}^{\lnot h}(e)=h\wedge e$. The fourth
line follows because, by definition, $\overline{v}(e|h)=\overline{v}%
(f_{h}^{\lnot h}(e))\cdot \frac{1}{v(h)_{0}}$.
\end{proof}

The domain of application of this result is narrowed by the hypothesis $e$, $%
h\in \nabla $: this means, in terms of partial events, that we should limit
ourselves to test hypothesis of the kind $(H,\emptyset )$ on the basis of
data like $(E,\emptyset )$. In both cases we have a particular kind of
partial event, an event that may occur only positively and cannot ever occur
negatively. In the following we shall prove a result in which $h$ and $e$
are free from any restriction.

Firstly we observe that, for any $e\in A$, there are three kinds of elements
naturally related to $e$: i) $e^{+}=e\vee n$, the positive part of $e$, ii) $%
e^{-}=\lnot e\vee n$, the negative part of $e$, iii) $\nabla e=e\vee \lnot e$%
, the join of the positive and the negative part. We note that $e^{+}$, $%
e^{-}$ and $\nabla e$ are all elements of $\nabla $. In the theorem we are
going to prove, three partial probabilities are related: i) $v(h|e)$, the
probability of the hypothesis $h$, given the occurrence of the positive
cases of the data, ii) $\overline{v}(h|e^{-})$, the probability of the
hypothesis $h$, given the occurrence of the negative cases of the data, iii) 
$\overline{v}(h|\nabla e)$, the probability of the hypothesis $h$, given the
occurrence of all possible cases of the data. The following lemma gives a
sufficient condition for the existence of such probabilities.

\begin{lemma}
\label{lemma1}If $\mathcal{A}$ is a DMF-algebra, $\overline{v}$ is a partial
valuation on $\mathcal{A}$ and $a\in A$, then

\begin{enumerate}
\item  $\overline{v}(a^{+})_{0}=\overline{v}(a)_{0}$, $\overline{v}%
(a^{-})_{0}=\overline{v}(a)_{1}$ and $\overline{v}(\nabla a)_{0}=\overline{v}%
(a)_{0}+\overline{v}(a)_{1},$

\item  $\overline{v}(a)_{0}$, $\overline{v}(a)_{1}\neq 0$ is a sufficient
condition for the existence of the conditional partial valuations $\overline{%
v}(-|a^{+})$, $\overline{v}(-|a^{-})$ and $\overline{v}(-|\nabla a)$.
\end{enumerate}
\end{lemma}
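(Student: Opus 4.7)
The plan is to derive point 1 from the structural results about partial valuations already established (in particular point 5 of Theorem \ref{teopropr_v} and the normality axiom), and then to read off point 2 by verifying, for each of the three elements $a^{+}$, $a^{-}$, $\nabla a$, the three conditions listed at the start of par.~\ref{parrpv} that guarantee existence of a relativized/conditional partial valuation. (I shall tacitly assume, as the surrounding discussion does, that $\overline{v}$ is isotone; in the context considered, Theorem \ref{teovalisot} typically secures this.)

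For point 1, the first two equalities are immediate from point 5 of Theorem \ref{teopropr_v}, which reads $\overline{v}(a) = (\overline{v}(a\vee n)_{0},\overline{v}(\lnot a\vee n)_{0})$: taking first coordinates gives $\overline{v}(a^{+})_{0} = \overline{v}(a\vee n)_{0} = \overline{v}(a)_{0}$, and $\overline{v}(a^{-})_{0} = \overline{v}(\lnot a\vee n)_{0} = \overline{v}(a)_{1}$. For the third equality, I would decompose $\nabla a = a\vee \lnot a$ as the join $(a\vee n)\vee (\lnot a\vee n)$ and compute the meet of these two via normality: $(a\vee n)\wedge(\lnot a\vee n) = (a\wedge \lnot a)\vee n = n$. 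Applying axiom 2 and point 2 of Theorem \ref{teopropr_v} (which says $\overline{v}(n) = (0,0)$) yields
\begin{equation*}
\overline{v}(\nabla a) = \overline{v}(a\vee n) + \overline{v}(\lnot a\vee n),
\end{equation*}
and taking first coordinates gives $\overline{v}(\nabla a)_{0} = \overline{v}(a)_{0} + \overline{v}(a)_{1}$ by the two identities already proved.

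For point 2, I would check the three existence conditions (i) $h\in\nabla$, (ii) $\overline{v}$ isotone, (iii) $\overline{v}(h)_{0}\neq 0$ for each of $h = a^{+}, a^{-}, \nabla a$. Condition (ii) is the background assumption. Condition (i) holds in all three cases by inspection: $n\leq a\vee n$, $n\leq \lnot a\vee n$, and $\nabla a\in \nabla$ by the very definition of $\nabla$ as the set of elements of the form $x\vee\lnot x$. Condition (iii) then follows from point 1 together with the hypothesis $\overline{v}(a)_{0},\overline{v}(a)_{1}\neq 0$: we obtain $\overline{v}(a^{+})_{0} = \overline{v}(a)_{0}\neq 0$, $\overline{v}(a^{-})_{0} = \overline{v}(a)_{1}\neq 0$, and $\overline{v}(\nabla a)_{0} = \overline{v}(a)_{0}+\overline{v}(a)_{1}\neq 0$ (as a sum of two positive numbers).

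There is no real obstacle here; the proof is a bookkeeping exercise. The only point worth flagging is the third equality of point 1, where one must recognize that normality is precisely what forces $(a\vee n)\wedge(\lnot a\vee n) = n$, so that the inclusion–exclusion formula of axiom 2 collapses neatly to a sum.
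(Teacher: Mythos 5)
Your proof is correct and follows essentially the same route as the paper's: the first two equalities of point 1 are read off from point 5 of Theorem \ref{teopropr_v}, and point 2 is the same verification of the three existence conditions from par.~\ref{parrpv}, with condition (iii) supplied by point 1. The only (immaterial) difference is in the third equality of point 1, where the paper applies axiom 2 directly to $a\vee\lnot a$ and notes that $\overline{v}(a\wedge\lnot a)_{0}=0$ because $a\wedge\lnot a\leq n$, whereas you decompose $\nabla a$ as $(a\vee n)\vee(\lnot a\vee n)$ with meet $n$ so that the correction term vanishes outright; both collapse the inclusion--exclusion formula to the same sum, and your explicit remark that isotonicity must be imported from the surrounding hypotheses is, if anything, a point the paper's own proof glosses over.
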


\begin{proof}
1. We have $\overline{v}(a^{+})_{0}=\overline{v}(a\vee n)_{0}=\overline{v}%
(a)_{0}$ and $\overline{v}(a^{-})_{0}=\overline{v}(\lnot a\vee n)_{0}=%
\overline{v}(a)_{1}$, by point 5 of theorem \ref{teopropr_v}. We have also $%
\overline{v}(\nabla a)_{0}=\overline{v}(a\vee \lnot a)_{0}=\overline{v}%
(a)_{0}+\overline{v}(\lnot a)_{0}+\overline{v}(a\wedge \lnot a)_{0}=%
\overline{v}(a)_{0}+\overline{v}(a)_{1}$, because $\overline{v}(a\wedge
\lnot a)=(0,y)$, for some $y\in \lbrack 0,1]$.

2. As we have seen at the end of par. \ref{parrpv}, three conditions are to
be satisfied for the existence of $\overline{v}(-|a^{+})$: i) $a^{+}\in
\nabla $, ii) $\overline{v}$ must be isotone, iii) $\overline{v}%
(a^{+})_{0}\neq 0$. The first follows by definition of $a^{+}$, the second \
follows by hypothesis, the third follows from $\overline{v}(a)_{0}\neq 0$
and point 1) above. The same kind of argument \ works for the existence of $%
\overline{v}(-|a^{-})$, we only remark that $a^{-}\in \nabla $ follows by
definition and $\overline{v}(a^{-})_{0}\neq 0$ follows from $\overline{v}%
(a)_{1}\neq 0$ and point 1) above. As for $\overline{v}(-|\nabla a)$, we
observe that $\nabla a\in \nabla $ holds by definition and $\overline{v}%
(\nabla a)_{0}\neq 0$ follows from $\overline{v}(a)_{0}$, $\overline{v}%
(a)_{1}\neq 0$ and point 1) above.
\end{proof}

The following lemma shows some properties of $f_{e^{+}}^{\lnot e^{+}}$, $%
f_{e^{-}}^{\lnot e^{-}}$ \ and $f_{\nabla e}^{\lnot \nabla e}$. (To save
notation, we write $\lnot e^{+}$ in place of $\lnot (e^{+})$ and $\lnot e^{-}$
in place of $\lnot (e^{-})$.)

\begin{lemma}
If $\mathcal{A}$ is a DMF-algebra and $\overline{v}$ is a partial valuation
on $\mathcal{A}$, then for every $e$, $a\in A$:

\begin{enumerate}
\item  $f_{e^{+}}^{\lnot e^{+}}(a)\vee f_{e^{-}}^{\lnot e^{-}}(a)=f_{\nabla
e}^{\lnot \nabla e}(a)\vee n$,

\item  $f_{e^{+}}^{\lnot e^{+}}(a)\wedge f_{e^{-}}^{\lnot
e^{-}}(a)=f_{\nabla e}^{\lnot \nabla e}(a)\wedge n$

\item  $\overline{v}(f_{\nabla e}^{\lnot \nabla e}(a))=\overline{v}%
(f_{e^{+}}^{\lnot e^{+}}(a))+\overline{v}(f_{e^{-}}^{\lnot e^{-}}(a))$.
\end{enumerate}
\end{lemma}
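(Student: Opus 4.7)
The plan is to prove parts 1 and 2 by direct algebraic manipulation in the DMF-algebra $\mathcal{A}$, then derive part 3 as an immediate consequence of parts 1 and 2 together with axiom 2 of partial valuations and point 4 of theorem \ref{teopropr_v}.

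Before starting, I would collect a few basic identities: by De Morgan and the fixed-point law, $\lnot e^{+}=\lnot e\wedge n$ and $\lnot e^{-}=e\wedge n$ (both lie in $\Delta$), while $e^{+},e^{-},\nabla e\in \nabla$ and $\lnot\nabla e=\Delta e\in\Delta$. In particular $\lnot e^{+},\lnot e^{-},\lnot\nabla e\leq n$ and $e^{+},e^{-},\nabla e\geq n$. I would also use the second form of the relativization, $f_{b}^{a}(x)=(x\wedge b)\vee a$, for part 1, and the first form $f_{b}^{a}(x)=(x\vee a)\wedge b$ for part 2, since they each collapse most of the computation via a single distributivity step.

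For part 1, I would expand
\begin{equation*}
f_{e^{+}}^{\lnot e^{+}}(a)\vee f_{e^{-}}^{\lnot e^{-}}(a)=\bigl[(a\wedge e^{+})\vee(a\wedge e^{-})\bigr]\vee\bigl[(\lnot e\wedge n)\vee(e\wedge n)\bigr],
\end{equation*}
then use distributivity to obtain $a\wedge(e^{+}\vee e^{-})=a\wedge(\nabla e\vee n)=a\wedge\nabla e$ (since $n\leq\nabla e$) and $(\lnot e\vee e)\wedge n=\nabla e\wedge n=n$, giving $(a\wedge\nabla e)\vee n$. On the right, $f_{\nabla e}^{\lnot\nabla e}(a)\vee n=(a\wedge\nabla e)\vee\lnot\nabla e\vee n=(a\wedge\nabla e)\vee n$ because $\lnot\nabla e\leq n$. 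For part 2, I would dually expand
\begin{equation*}
f_{e^{+}}^{\lnot e^{+}}(a)\wedge f_{e^{-}}^{\lnot e^{-}}(a)=\bigl[(a\vee\lnot e^{+})\wedge(a\vee\lnot e^{-})\bigr]\wedge\bigl[e^{+}\wedge e^{-}\bigr],
\end{equation*}
and use distributivity to obtain $a\vee(\lnot e^{+}\wedge\lnot e^{-})=a\vee(\Delta e\wedge n)=a\vee\Delta e$ and $e^{+}\wedge e^{-}=(e\wedge\lnot e)\vee n=n$, so the meet is $(a\vee\Delta e)\wedge n$; and on the right, $f_{\nabla e}^{\lnot\nabla e}(a)\wedge n=(a\vee\Delta e)\wedge\nabla e\wedge n=(a\vee\Delta e)\wedge n$ since $\nabla e\geq n$.

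For part 3, set $u=f_{e^{+}}^{\lnot e^{+}}(a)$, $w=f_{e^{-}}^{\lnot e^{-}}(a)$, $y=f_{\nabla e}^{\lnot\nabla e}(a)$. By axiom 2 of partial valuations, $\overline{v}(u\vee w)+\overline{v}(u\wedge w)=\overline{v}(u)+\overline{v}(w)$. Parts 1 and 2 rewrite the left-hand side as $\overline{v}(y\vee n)+\overline{v}(y\wedge n)$, which equals $\overline{v}(y)$ by point 4 of theorem \ref{teopropr_v}, giving the claim. The only real obstacle is the bookkeeping of the two four-term expressions in parts 1 and 2; after the right distributive grouping and the observations $n\leq\nabla e$ and $\Delta e\leq n$, everything collapses to the desired common form.
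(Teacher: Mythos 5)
Your proof is correct and follows essentially the same route as the paper: parts 1 and 2 are verified as distributive-lattice identities using normality ($\Delta e\leq n\leq\nabla e$), and part 3 then falls out of axiom 2 together with point 4 of theorem \ref{teopropr_v}. The only divergence is in part 2, where you compute the meet directly via the dual form $f_{b}^{a}(x)=(x\vee a)\wedge b$, whereas the paper derives it from part 1 by applying $\lnot$ and the fact (theorem \ref{teorelpar}) that the relativizations are DMF-morphisms; both arguments are sound, and yours has the minor advantage of not invoking the morphism property.
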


\begin{proof}
1. We remember that $e^{+}=e\vee n$, $e^{-}=\lnot e\vee n$, so $\lnot
e^{+}=\lnot e\wedge n$ and $\lnot e^{-}=e\wedge n$, thus 
\begin{eqnarray*}
f_{e^{+}}^{\lnot e^{+}}(a)\vee f_{e^{-}}^{\lnot e^{-}}(a)&=&
((a\wedge e^{+})\vee \lnot e^{+})\vee ((a\wedge e^{-})\vee \lnot e^{-}) \\
&=&((a\wedge (e\vee n))\vee (\lnot e\wedge n))\vee ((a\wedge (\lnot e\vee
n))\vee (e\wedge n))) \\
&=&(a\wedge e)\vee (\lnot e\wedge n)\vee (a\wedge \lnot e)\vee (e\wedge n)\vee
(a\wedge n) \\
&=&(a\wedge (e\vee \lnot e))\vee (n\wedge (e\vee \lnot e))\vee (a\wedge n) \\
&=&(a\wedge (e\vee \lnot e))\vee n.
\end{eqnarray*}
On the other side, 
\begin{eqnarray*}
f_{\nabla e}^{\lnot \nabla e}(a)\vee n&=&((a\wedge \nabla e)\vee \lnot \nabla
e)\vee n \\
&=&((a\wedge (e\vee \lnot e))\vee (e\wedge \lnot e))\vee n \\
&=&((a\wedge (e\vee \lnot e))\vee n.
\end{eqnarray*}

2. 
\begin{eqnarray*}
f_{e^{+}}^{\lnot e^{+}}(a)\wedge f_{e^{-}}^{\lnot e^{-}}(a)&=&\lnot (\lnot
f_{e^{+}}^{\lnot e^{+}}(a)\vee \lnot f_{e^{-}}^{\lnot e^{-}}(a)) \\
&=&\lnot (f_{e^{+}}^{\lnot e^{+}}(\lnot a)\vee f_{e^{-}}^{\lnot e^{-}}(\lnot
a)) \\
&=&\lnot (f_{\nabla e}^{\lnot \nabla e}(\lnot a)\vee n) \\
&=&f_{\nabla e}^{\lnot \nabla e}(a)\wedge n,
\end{eqnarray*}
where line two and four follow because $f_{e^{+}}^{\lnot e^{+}}$ and $%
f_{\nabla e}^{\lnot \nabla e}$ are morphisms of DMF-algebras, by theorem \ref
{teorelpar}.

3. By point 1) and 2) above, we have 
\begin{eqnarray*}
\overline{v}(f_{e^{+}}^{\lnot e^{+}}(a)\vee (f_{e^{-}}^{\lnot e^{-}}(a))&=&%
\overline{v}(f_{\nabla e}^{\lnot \nabla e}(a)\vee n) \\
&=&\overline{v}(f_{\nabla e}^{\lnot \nabla e}(a))-\overline{v}(f_{\nabla
e}^{\lnot \nabla e}(a)\wedge n) \\
&=&\overline{v}(f_{\nabla e}^{\lnot \nabla e}(a))-\overline{v}%
(f_{e^{+}}^{\lnot e^{+}}(a)\wedge f_{e^{-}}^{\lnot e^{-}}(a)),
\end{eqnarray*}
thus 
\begin{eqnarray*}
\overline{v}(f_{\nabla e}^{\lnot \nabla e}(a))&=&\overline{v}(f_{e^{+}}^{\lnot
e^{+}}(a)\vee (f_{e^{-}}^{\lnot e^{-}}(a))+\overline{v}(f_{e^{+}}^{\lnot
e^{+}}(a)\wedge (f_{e^{-}}^{\lnot e^{-}}(a)) \\
&=& \overline{v}(f_{e^{+}}^{\lnot e^{+}}(a))+\overline{v}(f_{e^{-}}^{\lnot
e^{-}}(a))-\overline{v}(f_{e^{+}}^{\lnot e^{+}}(a)\wedge (f_{e^{-}}^{\lnot
e^{-}}(a))+ \\
&+&\overline{v}(f_{e^{+}}^{\lnot e^{+}}(a)\wedge (f_{e^{-}}^{\lnot e^{-}}(a))
\\
&=&\overline{v}(f_{e^{+}}^{\lnot e^{+}}(a))+\overline{v}(f_{e^{-}}^{\lnot
e^{-}}(a)).
\end{eqnarray*}
\end{proof}

For any $a\in A$, we define the \textit{bias} of $a$ as the ratio $\theta
(a)=\overline{v}(a)_{1}/\overline{v}(a)_{0}$. The number $\theta (a)$
measures the inclination of the event $a$ toward coming into existence. When $\theta
(a)<1$, the event shows an inclination toward happening, when $\theta (a)>1$
the event shows an inclination toward not-happening and when $\theta (a)=1$
the event shows no propensity.

\begin{theorem}
If $\mathcal{A}$ is a DMF-algebra and $\overline{v}$ is an isotone partial
valuation on $\mathcal{A}$, then for every $e$, $h\in A$ such that $%
\overline{v}(e)_{0}$, $\overline{v}(e)_{1}\neq 0$, 
\begin{equation*}
\overline{v}(h|e^{+})=\overline{v}(h|\nabla e)\cdot (1+\theta (e))-\overline{%
v}(h|e^{-})\cdot \theta (e).
\end{equation*}
\end{theorem}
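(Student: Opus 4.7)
The plan is to unfold each of the three conditional partial valuations via their definition $\overline{v}(h|k)=\overline{v}(f_k^{\lnot k}(h))\cdot 1/\overline{v}(k)_0$ and then to invoke point~3 of the preceding lemma, which already packages the key identity $\overline{v}(f_{\nabla e}^{\lnot\nabla e}(h))=\overline{v}(f_{e^{+}}^{\lnot e^{+}}(h))+\overline{v}(f_{e^{-}}^{\lnot e^{-}}(h))$. After that, only elementary algebra with the normalization constants is needed.

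First I would verify that all three conditional partial valuations $\overline{v}(-|e^{+})$, $\overline{v}(-|e^{-})$ and $\overline{v}(-|\nabla e)$ are well-defined: by Lemma~\ref{lemma1}, the hypotheses $\overline{v}(e)_0,\overline{v}(e)_1\neq 0$ together with isotonicity of $\overline{v}$ supply precisely the three conditions (membership in $\nabla$, isotonicity, nonzero first coordinate) required at the end of par.~\ref{parrpv}.

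Next, using point~1 of Lemma~\ref{lemma1} to identify $\overline{v}(e^{+})_0=\overline{v}(e)_0$, $\overline{v}(e^{-})_0=\overline{v}(e)_1$ and $\overline{v}(\nabla e)_0=\overline{v}(e)_0+\overline{v}(e)_1$, I would rewrite
$$
\overline{v}(h|e^{+})=\frac{\overline{v}(f_{e^{+}}^{\lnot e^{+}}(h))}{\overline{v}(e)_0},\quad
\overline{v}(h|e^{-})=\frac{\overline{v}(f_{e^{-}}^{\lnot e^{-}}(h))}{\overline{v}(e)_1},\quad
\overline{v}(h|\nabla e)=\frac{\overline{v}(f_{\nabla e}^{\lnot\nabla e}(h))}{\overline{v}(e)_0+\overline{v}(e)_1}.
$$
Applying point~3 of the preceding lemma and substituting yields
$$
\overline{v}(h|\nabla e)=\frac{\overline{v}(e)_0\cdot\overline{v}(h|e^{+})+\overline{v}(e)_1\cdot\overline{v}(h|e^{-})}{\overline{v}(e)_0+\overline{v}(e)_1}.
$$
Dividing numerator and denominator by $\overline{v}(e)_0$ (nonzero by hypothesis) and using $\theta(e)=\overline{v}(e)_1/\overline{v}(e)_0$ gives $\overline{v}(h|\nabla e)\cdot(1+\theta(e))=\overline{v}(h|e^{+})+\theta(e)\cdot\overline{v}(h|e^{-})$, which rearranges to the claimed identity.

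The only real obstacle is the bookkeeping needed to be sure the three relativized valuations exist at all, and this is entirely handled by Lemma~\ref{lemma1}; once point~3 of the second lemma is in hand, the proof is purely arithmetic and consists of clearing denominators. No coordinate-wise reasoning about pairs in $T$ is required beyond what is already encapsulated in those two lemmas.
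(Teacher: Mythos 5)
Your proposal is correct and follows essentially the same route as the paper's proof: both unfold the three conditional valuations via their definitions, use point 1 of Lemma \ref{lemma1} to identify the normalization constants $\overline{v}(e^{+})_{0}=\overline{v}(e)_{0}$, $\overline{v}(e^{-})_{0}=\overline{v}(e)_{1}$, $\overline{v}(\nabla e)_{0}=\overline{v}(e)_{0}+\overline{v}(e)_{1}$, invoke point 3 of the second lemma for the additive decomposition of $\overline{v}(f_{\nabla e}^{\lnot\nabla e}(h))$, and finish by dividing through by $\overline{v}(e)_{0}$ to introduce $\theta(e)$. The only cosmetic difference is that the paper starts from $\overline{v}(h|e^{+})$ and substitutes forward, while you derive the symmetric weighted-average identity and rearrange; the arithmetic is identical.
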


\begin{proof}
From our hypothesis we see that the existence of the conditional partial
valuations $\overline{v}(-|e^{+})$, $\overline{v}(-|e^{-})$ and $\overline{v}%
(-|\nabla e)$ is guaranteed by point 2) of lemma \ref{lemma1}. So we have 
\begin{eqnarray*}
\overline{v}(h|e^{+}) &=&\overline{v}(f_{e^{+}}^{\lnot e^{+}}(h))\cdot \frac{%
1}{\overline{v}(e^{+})_{0}} \\
&=&(\overline{v}(f_{\nabla e}^{\lnot \nabla e}(h))-\overline{v}%
(f_{e^{-}}^{\lnot e^{-}}(h)))\cdot \frac{1}{\overline{v}(e^{+})_{0}} \\
&=&(\overline{v}(h|\nabla e)\cdot \overline{v}(\nabla e)_{0}-\overline{v}%
(h|e^{-})\cdot \overline{v}(e^{-})_{0})\cdot \frac{1}{\overline{v}(e^{+})_{0}%
},
\end{eqnarray*}
where the first line follows by definition of $\overline{v}(-|e^{+})$, the
second line by point 3) of the preceding lemma and the third line because 
\begin{equation*}
\overline{v}(h|e^{-})=\overline{v}(f_{e^{-}}^{\lnot e^{-}}(h))\cdot \frac{1}{%
\overline{v}(e^{-})_{0}}\text{ and }\overline{v}(h|e^{-})=\overline{v}%
(f_{\nabla e}^{\lnot \nabla e}(a)(h))\cdot \frac{1}{\overline{v}(\nabla
e)_{0}}
\end{equation*}
by definition of $\overline{v}(-|e^{-})$ and of $\overline{v}(-|\nabla e)$.
Thus, by point 1) of lemma \ref{lemma1} 
\begin{eqnarray*}
\overline{v}(h|e^{+}) &=&(\overline{v}(h|\nabla e)\cdot (\overline{v}(e)_{0}+%
\overline{v}(e)_{1})-\overline{v}(h|e^{-})\cdot \overline{v}(e)_{1})\cdot 
\frac{1}{\overline{v}(e)_{0}} \\
&=&(\overline{v}(h|\nabla e)\cdot \frac{\overline{v}(e)_{0}+\overline{v}%
(e)_{1}}{\overline{v}(e)_{0}}-\overline{v}(h|e^{-})\cdot \frac{\overline{v}%
(e)_{1}}{\overline{v}(e)_{0}} \\
&=&\overline{v}(h|\nabla e)\cdot (1+\theta (e))-\overline{v}(h|e^{-})\cdot
\theta (e).
\end{eqnarray*}
\end{proof}

\section{Partial probability of sentences\label{parprobparsent}}

As we have seen in par. \ref{parprobsent}, we can understand classic
probability as a degree of belief in a sentence. We can do the same thing
with partial probability: we have only to shift from bivalent logic to
Kleene's logic and from probability values in $[0,1]$ to probability values
in $T$. This should justify a brief digression in the semantics of Kleene's
logic.

The language of Kleene's \ $n$-ary logic is $L_{n}^{\ast }=$ $L_{n}\cup
\{n\} $, where $L_{n}$ is the $n$-ary language of classical logic introduced
in par. \ref{parprobsent}. We denote with $F_{n}^{\ast }$ the set of
formulas of $L$ and with $\mathcal{F}_{n}^{\ast }$ the algebra of $n$-ary
formula. We write simply $L^{\ast }$, $%
F^{\ast }$ and $\mathcal{F}^{\ast }$ when no confusion is possible. We
denote with $K$ the set $\{0,n,1\}$ of the truth-values of Kleene's logic,
leaving to the reader the task of distinguishing $n$ as a symbol of the
formal language from $n$ as a truth-value. The meanings of $n$-ary formulas
are to be found as elements of the field of partial set

\begin{equation*}
\mathcal{D}(K^{n})=(D(K^{n}),\sqcap ,\sqcup ,-,(\emptyset
,K^{n}),(K^{n},\emptyset ),(\emptyset ,\emptyset )).
\end{equation*}
Every $s\in K^{n}$ cas be seen as an instantaneous description of the world,
at the level of the atomic facts represented by sentential variables. When $%
s_{i}=n$, the atomic fact represented by $p_{i}$ is neither happened nor
not-happened. This uncertainty may be of an epistemic kind, related to a
lack of knowledge, or may be deeply rooted in the reality.

We associate to every $n$-ary formula $\alpha $ a \textit{meaning} $M(\alpha
)$ as an element of $\mathcal{D}(K^{n})$ as follows. Firstly, we define a
function $g:\{p_{i}:i<n\}\rightarrow \mathcal{D}(K^{n})$ setting

\begin{equation*}
g(p_{i})=(\{s\in K^{n}:s_{i}=1\},\{s\in K^{n}:s_{i}=0\}).
\end{equation*}
Then a function $M:\mathcal{F}\rightarrow \mathcal{D}(K^{n})$ can be
defined, as the only morphism induced by $g$, as follows:

\begin{eqnarray*}
M(\alpha \wedge \beta ) &=&M(\alpha )\sqcap M(\beta ), \\
M(\alpha \vee \beta ) &=&M(\alpha )\sqcup M(\beta ), \\
M(\lnot \alpha ) &=&-M(\alpha ), \\
M(0) &=&(\emptyset ,K^{n}), \\
M(1) &=&(K^{n},\emptyset ), \\
M(n) &=&(\emptyset ,\emptyset ).
\end{eqnarray*}
Thus the meaning of $\alpha $ is a partial set $M(\alpha )$, where $M(\alpha
)_{0}$ and $M(\alpha )_{1}$ are respectively the positive and the negative
models of $\alpha $.

The semantics can also be given through a function $V_{s}$ that assigns to
every formula $\alpha $ a truth value $V_{s}(\alpha )\in K$ with respect to
a possible world $s\in K^{n}$. Firstly we denote with $\mathcal{K}$ the
algebra $(K,\wedge ,\vee ,\lnot ,0,1,n)$, where the operations are defined
as follows: $x\wedge y=\min (x,y)$ and $x\vee y=\max (x,y)$, supposing $K$
linearly ordered by $0\leq n\leq 1$. As for negation, we set $\lnot (n)=n$, $%
\lnot (0)=1$, $\lnot (1)=0$. Obviously, $\mathcal{K}$ is a DMF-algebra. Then
we define the assignment of truth
values $h_{s}:\{p_{i}:i<n\}\rightarrow K$ by $h_{s}(p_{i})=s_{i}$.
Finally, $V_{s}$ is the only morphism from $\mathcal{F}$ to $\mathcal{K}$
induced by $h_{s}$. We say that $\alpha $ is \textit{true in} $s$ iff $%
V_{s}(\alpha )=1$, \textit{false} if $V_{s}(\alpha )=0$ and \textit{undefined%
} if $V_{s}(\alpha )=n$.

These two ways of giving a semantics are equivalent: if we take the notion
of meaning given by $M$ as primitive, then we can define 
\begin{equation*}
V_{s}(\alpha )=\left\{ 
\begin{array}{ccc}
0 & \mathrm{if} & s\in M(\alpha )_{1}, \\ 
1 & \mathrm{if} & s\in M(\alpha )_{0}, \\ 
n & \text{\textrm{if}} & s\in K^{n}-(M(\alpha )_{0}\cup M(\alpha )_{1});
\end{array}
\right.
\end{equation*}
if we take the notion of truth in the possible world $s$ given by $V_{s}$ as
primitive, then 
\begin{equation*}
M(\alpha )=(\{s\in K^{n}:V_{s}(\alpha )=1\},\{s\in K^{n}:V_{s}(\alpha )=0\}).
\end{equation*}

We define the notion of \textit{logical consequence} as follows: $\alpha
\models \beta $ iff $M(\alpha )\sqsubseteq M(\beta )$ iff $M(\alpha
)_{0}\subseteq M(\beta )_{0}$ and $M(\beta )_{1}\subseteq M(\alpha )_{1}$
iff every postive model of $\alpha $ is a positive model of $\beta $ and
every negative model of $\beta $ is a negative model of $\alpha $. The
notion of logical consequence can be generalized to 
\begin{equation*}
\Gamma \models \alpha \text{ iff }{\sqcap }\{M(\gamma ):\gamma \in \Gamma
\}\sqsubseteq M(\alpha ).
\end{equation*}
In terms of $V_{s}$, the definition runs as follows: 
\begin{equation*}
\Gamma \models \alpha \text{ iff, for all }s\in K^{n}\text{, }\bigwedge
\{V_{s}(\gamma ):\gamma \in \Gamma \}\leq V_{s}(\alpha ).
\end{equation*}

Now we can introduce probability as a degree of belief in a sentence as
follows. We say that $\pi :F_{n}^{\ast }\rightarrow T$ is a \textit{partial
probability function on} $L_{n}^{\ast }$ if the following axioms are
satisfied, where $\models $ denotes logical consequence in Kleene's logic:

\begin{enumerate}
\item  $1\models \alpha $ implies $\pi (\alpha )=(1,0)$,

\item  $\pi (\alpha \vee \beta )=\pi (\alpha )+\pi (\beta )-\pi (\alpha
\wedge \beta )$,

\item  $\pi (\lnot \alpha )=\sigma (\pi (\alpha ))$,

\item  $n\models \alpha $ implies $(0,0)\preccurlyeq \pi (\alpha ).$
\end{enumerate}

The following theorem shows some fundamental properties of $\pi $.

\begin{theorem}
\label{teoprobfun}If $\pi $ is a partial probability function on $%
L_{n}^{\ast }$, then

\begin{enumerate}
\item  $\pi (n)=(0,0).$

\item  $\alpha \models 0$ implies $\pi (\alpha )=(0,1)$,

\item  $\alpha \models n$ implies $\pi (\alpha )\preccurlyeq (0,0)$,

\item  $\pi (\alpha )=\pi (\alpha \vee n)+\pi (\alpha \wedge n)$.
\end{enumerate}
\end{theorem}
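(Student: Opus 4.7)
The plan is to mirror the proof of Theorem \ref{teopropr_v}, since the axioms for a partial probability function $\pi$ on $L_n^{\ast}$ are exactly the sentential analogues of the four axioms defining a partial valuation on a DMF-algebra, with logical consequence $\models$ replacing the lattice order $\leq$. All four parts then follow in essentially the same way, using only the four $\pi$-axioms together with the observations $\sigma\sigma = \mathrm{id}$ and that $\sigma$ reverses $\preccurlyeq$.

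For part 1, I will verify that $n \models n$ and $n \models \lnot n$ in Kleene's logic. The first is trivial; for the second, $M(\lnot n) = -M(n) = -(\emptyset,\emptyset) = (\emptyset,\emptyset) = M(n)$, so $M(n) \sqsubseteq M(\lnot n)$. Applying axiom 4 twice gives $(0,0)\preccurlyeq \pi(n)$ and $(0,0)\preccurlyeq \pi(\lnot n) = \sigma(\pi(n))$ by axiom 3. Applying $\sigma$ to the second inequality yields $\pi(n)\preccurlyeq (0,0)$, and equality follows. For part 2, from $\alpha \models 0$ I note that $M(\alpha) \sqsubseteq M(0) = (\emptyset, K^n)$ forces $M(\alpha) = M(0)$, whence $M(\lnot\alpha) = (K^n,\emptyset) = M(1)$; therefore $1 \models \lnot\alpha$, so axiom 1 gives $\pi(\lnot\alpha) = (1,0)$, and axiom 3 together with $\sigma\sigma = \mathrm{id}$ yields $\pi(\alpha) = \sigma(1,0) = (0,1)$.

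For part 3, from $\alpha \models n$ I obtain $M(\alpha)_0 = \emptyset$, hence $M(\lnot\alpha) = (M(\alpha)_1,\emptyset)$, which trivially satisfies $M(n) = (\emptyset,\emptyset)\sqsubseteq M(\lnot\alpha)$; thus $n \models \lnot\alpha$. Axiom 4 then gives $(0,0)\preccurlyeq \pi(\lnot\alpha)=\sigma(\pi(\alpha))$ by axiom 3, and applying $\sigma$ yields $\pi(\alpha)\preccurlyeq(0,0)$. For part 4, I instantiate axiom 2 with $\beta = n$ to get
\[
\pi(\alpha \vee n) = \pi(\alpha) + \pi(n) - \pi(\alpha \wedge n),
\]
and substitute $\pi(n) = (0,0)$ from part 1 to conclude $\pi(\alpha) = \pi(\alpha\vee n)+\pi(\alpha\wedge n)$.

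There is no genuine obstacle here: the proof is structurally parallel to Theorem \ref{teopropr_v}, and the only points that require care are the Kleene-semantic translations $\alpha \models 0 \Rightarrow 1\models \lnot\alpha$ and $\alpha\models n \Rightarrow n\models \lnot\alpha$, both of which reduce to simple checks on the partial sets $M(\alpha)$, $M(n)$ and $M(0)$ under the operation $-$. Everything else is manipulation of $\sigma$ and a single application of additivity.
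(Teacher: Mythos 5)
Your proof is correct and follows essentially the same route as the paper's: both derive part 1 from $n\models n$ and $n\models\lnot n$ via axioms 3 and 4 and the order-reversal of $\sigma$, parts 2 and 3 by passing to $\lnot\alpha$ and using axioms 1 and 4 respectively, and part 4 by instantiating axiom 2 at $\beta=n$. The only difference is that you explicitly verify the Kleene-semantic implications ($\alpha\models 0\Rightarrow 1\models\lnot\alpha$, etc.) that the paper merely asserts, which is a harmless addition.
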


\begin{proof}
1. From $n\models n$ we have $(0,0)\preccurlyeq \pi (n)$, by axiom 4. In
Kleene's logic we have $n\models \lnot n$ and so $(0,0)\preccurlyeq \pi
(\lnot n)$. In general we have $(x,y)\preccurlyeq (x^{\prime },y^{\prime })$
iff $\sigma (x^{\prime },y^{\prime })\preccurlyeq \sigma (x,y)$, thus $%
\sigma (\pi (\lnot (n)))\preccurlyeq (0,0)$. By axiom 3, $\sigma (\pi (\lnot
(n)))=\sigma (\sigma (\pi (n)))=\pi (n)$ holds. Thus $\pi (n)\preccurlyeq
(0,0)$.

2. In Kleene's logic $\alpha \models 0$ implies $1\models \lnot \alpha $ and
so, by axiom 1, $\pi (\lnot \alpha )=(1,0)$. Thus, by axiom 3, $\sigma (\pi
(\alpha ))=(1,0)$ e $\pi (\alpha )=(0,1)$.

3. In Kleene's logic $\alpha \models n$ implies $n\models \lnot \alpha $, so 
$(0,0)\preccurlyeq \pi (\lnot \alpha )=\sigma (\pi (\alpha ))$ and $\pi
(\alpha )\preccurlyeq (0,0)$.

4. $\pi (\alpha \vee n)=\pi (\alpha )+\pi (n)-\pi (\alpha \wedge n)=\pi
(\alpha )-\pi (\alpha \wedge n)$ by axiom 2 and point 1).
\end{proof}

Now we can prove two translatability results, as we did in par. \ref{partrad}
for the Boolean framework: partial probability as a measure of a partial set
and partial probability as a degree of belief in a sentence can be
translated one into the other.

\section{From sentences to partial sets\label{partradpar1}}

The main tool in this translation is the notion of Lindenbaum algebra for
Kleene's logic. As in Boolean logic, the Lindenbaum algebra arises from the
identification of logically equivalent formulas. We define a $2$-ary
relation $\sim $ on $F_{n}^{\ast }$ setting $\alpha \sim \beta $ iff $%
M(\alpha )=M(\beta )$, where $M:\mathcal{F}_{n}\rightarrow \mathcal{D}(K^{n})$
is the morphism defined in the preceding paragraph. The quotient $\mathcal{F}%
_{n}^{\ast }/\sim $ is the \textit{Lindenbaum algebra of Kleene's} $n$-%
\textit{ary logic}. If we denote with $M[\mathcal{F}_{n}^{\ast }]$ the image
of $\mathcal{F}_{n}^{\ast }$\ in $\mathcal{D}(K^{n})$, we have that $%
\mathcal{F}_{n}^{\ast }/\sim $ is isomorphic to $M[\mathcal{F}_{n}^{\ast }]$%
. As a subalgebra of $\mathcal{D}(K^{n})$, $M[\mathcal{F}_{n}^{\ast }]$ is a
DMF-algebra and so is $\mathcal{F}_{n}^{\ast }/\sim $.

Now we suppose that partial probability be given as a partial probability
function $\pi $ on $L_{n}^{\ast }$. We aim to define a partial probability
space $(A,\mathcal{G}_{A},\mu )$ and a function $\varphi :F_{n}^{\ast
}\rightarrow \mathcal{G}_{A}$ such that $\pi (\alpha )=\mu (\varphi (\alpha
))$ holds for all $\alpha \in F_{n}^{\ast }$. The translation from partial
probability on sentences to partial probability on partial sets suffers from
a drawback: it works only for isotone functions of partial probability. We
say that $\pi $ is \textit{isotone} if $\alpha \models \beta $
implies $\pi (\alpha )\preceq \pi (\beta )$. Every isotone function $\pi $
is obviously compatible with $\sim $: if $\alpha \sim \beta $ then $\alpha
\models \beta $ and $\beta \models \alpha $ so $\pi (\alpha )=\pi (\beta )$
by isotonicity of $\pi $ and antisimmetry of $\preceq $.

\begin{lemma}
If $\pi $ is an isotone partial probability function on $L_{n}^{\ast }$,
then the function $\pi ^{\ast }$ from $\mathcal{F}_{n}/\sim $ to $T$ defined
setting $\pi ^{\ast }(|\alpha |)=\pi (\alpha )$ is a partial valuation on
the DMF-algebra $\mathcal{F}_{n}/\sim $.
\end{lemma}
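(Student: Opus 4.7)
The plan is to show that $\pi^{\ast}$ is well-defined on equivalence classes, and then verify one by one the four axioms of partial valuation stated in section \ref{parvalpar} by transferring them from the corresponding axioms for $\pi$. Well-definedness is already essentially established by the remark preceding the lemma: isotonicity forces $\pi(\alpha)=\pi(\beta)$ whenever $\alpha\sim\beta$, so $\pi^{\ast}(|\alpha|)=\pi(\alpha)$ does not depend on the representative.

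Next I would exploit the fact that the canonical projection $|\cdot|\colon\mathcal{F}_{n}^{\ast}\rightarrow\mathcal{F}_{n}^{\ast}/\!\sim$ is a DMF-morphism, so $|\alpha\vee\beta|=|\alpha|\vee|\beta|$, $|\alpha\wedge\beta|=|\alpha|\wedge|\beta|$, $|\lnot\alpha|=\lnot|\alpha|$, $|0|=0$, $|n|=n$, $|1|=1$. Axiom~1 then reduces to showing $\pi^{\ast}(0)=\pi(0)=(0,1)$, which follows from $0\models 0$ and point~2 of theorem \ref{teoprobfun}. Axiom~2 is immediate from axiom~2 on $\pi$:
\begin{equation*}
\pi^{\ast}(|\alpha|\vee|\beta|)=\pi(\alpha\vee\beta)=\pi(\alpha)+\pi(\beta)-\pi(\alpha\wedge\beta)=\pi^{\ast}(|\alpha|)+\pi^{\ast}(|\beta|)-\pi^{\ast}(|\alpha|\wedge|\beta|).
\end{equation*}
Axiom~3 is equally direct: $\pi^{\ast}(\lnot|\alpha|)=\pi(\lnot\alpha)=\sigma(\pi(\alpha))=\sigma(\pi^{\ast}(|\alpha|))$ by axiom~3 on $\pi$.

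The one step that requires a short translation between the algebraic and the logical language is axiom~4. Here I would use the fact that in $\mathcal{F}_{n}^{\ast}/\!\sim$ the order is inherited from $\mathcal{D}(K^{n})$ via the isomorphism with $M[\mathcal{F}_{n}^{\ast}]$, so $n\leq|\alpha|$ amounts to $M(n)\sqsubseteq M(\alpha)$, i.e.\ $n\models\alpha$; axiom~4 on $\pi$ then yields $(0,0)\preccurlyeq\pi(\alpha)=\pi^{\ast}(|\alpha|)$. I expect this to be the only place where one has to unfold a definition rather than simply copy an axiom across the quotient, but it is a one-line observation about the semantics of $\models$ in Kleene's logic, so no genuine obstacle is anticipated.
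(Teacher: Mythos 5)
Your proposal is correct and follows essentially the same route as the paper: well-definedness via compatibility of the isotone $\pi$ with $\sim$, followed by a direct axiom-by-axiom verification using the fact that the quotient map is a DMF-morphism, with axiom 4 handled by unwinding $n\leq|\alpha|$ into $n\models\alpha$. Your slightly more explicit justification of that last step (via the isomorphism with $M[\mathcal{F}_{n}^{\ast}]$) is a harmless elaboration of what the paper states as "by definition of $\leq$ in the Lindenbaum algebra."
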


\begin{proof}
The function $\pi ^{\ast }$ is well-defined because $\pi $ is compatible
with $\sim $. We show that $\pi ^{\ast }$ satisfies the axioms of partial
valuation.

1. $\pi ^{\ast }(|0|)=\pi (0)=(0,1)$, by point 2 of theorem \ref{teoprobfun}.

2. $\pi ^{\ast }(|\alpha |\vee |\beta |)=\pi ^{\ast }(|\alpha \vee \beta
|)=\pi (\alpha \vee \beta )=\pi (\alpha )+\pi (\beta )-\pi (\alpha \wedge
\beta )=$ $\pi ^{\ast }(|\alpha |)+\pi ^{\ast }(|\beta |)-\pi ^{\ast
}(|\alpha |\wedge |\beta |)$, by axiom 2.

3. $\pi ^{\ast }(\lnot |\alpha |)=\pi ^{\ast }(|\lnot \alpha |)=\pi (\lnot
\alpha )=\sigma (\pi (\alpha ))=\sigma (\pi ^{\ast }(|\alpha |))$, by axiom
3.

4. If $|n|\leq |\alpha |$ in the Lindenbaum algebra, then $n\models \alpha $%
, by definition of $\leq $ in the Lindenbaum algebra, thus $%
(0,0)\preccurlyeq \pi (\alpha )=\pi ^{\ast }(|\alpha |)$.
\end{proof}

\begin{theorem}
If $\pi $ is an isotone partial probability function on $L_{n}$, then there
is a partial probability space $(K^{n},M[\mathcal{F}_{n}],\mu )$ and a
morphism $\varphi $ from $\mathcal{F}_{n}$ to $M[\mathcal{F}_{n}]$ such
that, for all $\alpha \in L_{n}$, $\pi (\alpha )=\mu (\varphi (\alpha ))$.
\end{theorem}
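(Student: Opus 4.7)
The plan is to mirror the argument of theorem \ref{teotrad} in the Kleene/DMF setting. First I would apply the preceding lemma to extract from $\pi$ a partial valuation $\pi^{\ast}$ on the Lindenbaum algebra $\mathcal{F}_n/\!\sim$, defined by $\pi^{\ast}(|\alpha|)=\pi(\alpha)$. Recall from the opening of this paragraph that the meaning map $M$ induces an isomorphism of DMF-algebras $\psi:\mathcal{F}_n/\!\sim\; \to M[\mathcal{F}_n]$ given by $\psi(|\alpha|)=M(\alpha)$; this is exactly the DMF-analogue of the isomorphism $\mathcal{F}_n/\!\sim\; \to \mathcal{P}(2^n)$ used in the Boolean translation.

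Next I would transport $\pi^{\ast}$ along $\psi^{-1}$ to obtain a function $\mu:M[\mathcal{F}_n]\to T$ by setting $\mu(X)=\pi^{\ast}(\psi^{-1}(X))$. Since $\psi^{-1}$ is a DMF-morphism and $\pi^{\ast}$ is a partial valuation on $\mathcal{F}_n/\!\sim$, theorem \ref{teo333} applied to $\psi^{-1}$ guarantees that $\mu$ is a partial valuation on $M[\mathcal{F}_n]$. Because $M[\mathcal{F}_n]$ is a subalgebra of $\mathcal{D}(K^n)$, it is a field of partial sets on $K^n$, and the four axioms of a partial valuation on a DMF-algebra coincide with the four axioms defining a measure of partial probability (using $\nabla=[n,1]$ to translate axiom 4). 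Hence $(K^n,M[\mathcal{F}_n],\mu)$ is a partial probability space.

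Finally I would take $\varphi=\psi\circ|\cdot|$, i.e.\ $\varphi(\alpha)=M(\alpha)$, which is a DMF-morphism from $\mathcal{F}_n$ to $M[\mathcal{F}_n]$ as a composition of morphisms. The verification is then a one-line computation:
\begin{equation*}
\mu(\varphi(\alpha))=\mu(\psi(|\alpha|))=\pi^{\ast}(\psi^{-1}(\psi(|\alpha|)))=\pi^{\ast}(|\alpha|)=\pi(\alpha).
\end{equation*}

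I expect no deep obstacle; the only care-point is ensuring that $\psi$ really is a DMF-isomorphism (in particular that it preserves $n$, which it does because $M(n)=(\emptyset,\emptyset)=n^{\mathcal{D}(K^n)}$) so that theorem \ref{teo333} can legitimately be invoked on $\psi^{-1}$. The hypothesis that $\pi$ is \emph{isotone} is essential precisely to ensure that $\pi^{\ast}$ is well-defined on $\sim$-classes (it is already used in the preceding lemma), and nothing further in the argument requires isotony.
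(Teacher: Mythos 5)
Your proposal is correct and follows essentially the same route as the paper: extract the partial valuation $\pi^{\ast}$ on $\mathcal{F}_{n}/\!\sim$ via the preceding lemma, transport it along $\psi^{-1}$ using theorem \ref{teo333} to get $\mu$ on $M[\mathcal{F}_{n}]$, and set $\varphi=\psi\circ|\cdot|=M$. The extra care-points you flag (that $\psi$ is a genuine DMF-isomorphism preserving $n$, and that isotony is needed only for well-definedness of $\pi^{\ast}$) are sound and, if anything, make the argument slightly more explicit than the paper's.
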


\begin{proof}
We know that an isomorphism $\psi :\mathcal{F}_{n}/\sim \rightarrow M[%
\mathcal{F}_{n}]$ is naturally associated to the morphism $M:\mathcal{F}%
_{n}\rightarrow \mathcal{D}(K^{n})$. Thus we define a function $\mu $ from $%
M[\mathcal{F}_{n}]$ to $T$ setting, for all partial set $(X,Y)$ in $M[%
\mathcal{F}_{n}]$, $\mu (X,Y)=\pi ^{\ast }(\psi ^{-1}(X,Y))$, where $\pi
^{\ast }$ is defined as in the above lemma. By theorem \ref{teo333}, $\mu $
is a partial valuation on $M[\mathcal{F}_{n}]$, as it comes from the
composition of the morphism $\psi ^{-1}$ with the valuation $\pi ^{\ast }$.
So we can define $\varphi $ from $\mathcal{F}_{n}$ to $M[\mathcal{F}_{n}]$,
setting $\varphi (\alpha )=\psi (|\alpha |)$: it can be easily shown that $%
\varphi $ is a morphism such that 
\begin{equation*}
\mu (\varphi (\alpha ))=\mu (\psi (|\alpha |)) 
=\pi ^{\ast }(\psi ^{-1}(\psi (|\alpha |))) 
=\pi ^{\ast }(|\alpha |) 
=\pi (\alpha ),
\end{equation*}
as required by the theorem.
\end{proof}

\section{From partial sets to sentences\label{partradpar2}}

Finally, we face the problem of translating probability conceived as a
valuation on a field of partial sets, into probability as a partial
probability function on the formulas of a formal language. The main tool in
this translation is the freeness of Lindenbaum algebras on the class of
DMF-algebras. The proof of this result requires a short digression on the
properties of ideals and filters in DMF-algebras.

Firstly, we recall the prime ideal theorem (for a proof, see for instance 
\cite{davey1990}, theorem 9.13).

\begin{theorem}
If $\mathcal{A}$ is a distributive lattice and $I$ and $F$ are respectively
an ideal and a filter such that $I\cap F=\emptyset $, then

\begin{enumerate}
\item  there is a prime ideal $J$ such that $I\subseteq J$ and $J\cap
F=\emptyset ,$

\item  there is a prime filter $G$ such that $F\subseteq G$ and $G\cap
I=\emptyset $.
\end{enumerate}
\end{theorem}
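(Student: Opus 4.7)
My plan is to prove part 1 by a standard Zorn's lemma argument, then obtain part 2 by duality. Let $\mathcal{I}$ denote the collection of all ideals of $\mathcal{A}$ that extend $I$ and are disjoint from $F$. The set $\mathcal{I}$ is nonempty (it contains $I$) and partially ordered by inclusion. Any chain $\mathcal{C} \subseteq \mathcal{I}$ has $\bigcup \mathcal{C}$ as an upper bound in $\mathcal{I}$: the union of a chain of ideals is an ideal, it contains $I$, and it is disjoint from $F$ because any member of $F$ would have to lie in some ideal of the chain. Hence by Zorn's lemma there is a maximal $J \in \mathcal{I}$; this $J$ contains $I$ and is disjoint from $F$, so it only remains to show that $J$ is prime.

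The key step, and the main place where distributivity is used, is to show that $a \wedge b \in J$ implies $a \in J$ or $b \in J$. I argue by contradiction: suppose $a, b \notin J$. By maximality of $J$, the ideals $J \vee (a]$ and $J \vee (b]$ (where $(x]$ denotes the principal ideal generated by $x$) both lie outside $\mathcal{I}$, hence both meet $F$. So there exist $j_1, j_2 \in J$ and $f_1, f_2 \in F$ with $f_1 \leq j_1 \vee a$ and $f_2 \leq j_2 \vee b$. Since $F$ is a filter, $f_1 \wedge f_2 \in F$, and by distributivity
\[
f_1 \wedge f_2 \leq (j_1 \vee a)\wedge(j_2 \vee b) = (j_1 \wedge j_2)\vee(j_1 \wedge b)\vee(a \wedge j_2)\vee(a \wedge b).
\]
Every term on the right lies in $J$: the first three involve an element of the ideal $J$ as a meetand, and $a \wedge b \in J$ by hypothesis. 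Hence $f_1 \wedge f_2 \in J$, contradicting $J \cap F = \emptyset$. Thus $J$ is prime.

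For part 2, I would either repeat the argument verbatim with the roles of ideals and filters, meets and joins interchanged (exploiting the self-duality of the notion of distributive lattice), or deduce it from part 1 by passing to the order-dual lattice $\mathcal{A}^{\circ}$, in which ideals and filters swap roles and prime ideals correspond to prime filters. The main obstacle is really just the distributivity calculation above; everything else is the routine Zorn's lemma scaffolding. Since the paper itself works only with finite lattices (as stated in the introduction), Zorn's lemma could in fact be replaced by ordinary induction on the finite poset of ideals, but the proof above handles the general case without extra cost.
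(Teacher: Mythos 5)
Your proof is correct: the Zorn's lemma scaffolding, the distributivity computation forcing $f_1\wedge f_2\in J$, and the passage to the order-dual for part 2 are all sound. The paper itself offers no proof of this statement --- it merely cites Davey and Priestley --- and your argument is essentially the standard one found there, so the only remark worth making is that your self-contained version is more general than the paper needs (in the finite setting the maximal ideal can be found by inspection rather than by Zorn's lemma, as you note).
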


The following corollary about separating points in distributive lattices
will be useful.

\begin{corollary}
\label{coridpr}If $x$ and $y$ belong to a distributive lattice $\mathcal{A}$
and $x\nleqslant y$, then there are a prime ideal $I$ and a prime filter $F$
in $\mathcal{A}$ such that: i) $I\cap F=\emptyset $; ii) $y\in I$ and $%
x\notin I$; iii) $x\in F$ and $y\notin F$.
\end{corollary}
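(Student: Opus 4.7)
The plan is to apply the prime ideal theorem to the principal ideal generated by $y$ and the principal filter generated by $x$, then take the filter in the conclusion to be simply the set-theoretic complement of the prime ideal we obtain.

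First, I would define $I_0 = {\downarrow}y = \{z \in A : z \leq y\}$ and $F_0 = {\uparrow}x = \{z \in A : x \leq z\}$. Clearly $I_0$ is an ideal and $F_0$ is a filter. The hypothesis $x \nleqslant y$ immediately yields $I_0 \cap F_0 = \emptyset$: any $z$ in the intersection would satisfy $x \leq z \leq y$, forcing $x \leq y$, a contradiction.

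Next, by part 1 of the prime ideal theorem, there exists a prime ideal $I$ with $I_0 \subseteq I$ and $I \cap F_0 = \emptyset$. Since $y \in I_0 \subseteq I$, condition (ii) gives $y \in I$; and since $x \in F_0$ while $I \cap F_0 = \emptyset$, we have $x \notin I$, completing (ii). Now I would define $F = A \setminus I$. In a distributive lattice, the complement of a prime ideal is a prime filter (this is essentially the definition: $I$ prime means $a \wedge b \in I \Rightarrow a \in I$ or $b \in I$, which translates to $A \setminus I$ being closed under $\wedge$ and upward-closed, i.e., a filter, and $a \vee b \in A\setminus I \Rightarrow a \in A\setminus I$ or $b \in A\setminus I$, i.e., prime). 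Hence $F$ is a prime filter, condition (i) $I \cap F = \emptyset$ holds by construction, and condition (iii) follows because $x \notin I$ gives $x \in F$ while $y \in I$ gives $y \notin F$.

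There is no real obstacle here: the only subtle point is the standard fact that the complement of a prime ideal in a distributive lattice is a prime filter, which I would either invoke directly or verify in one line. Alternatively, one could apply part 2 of the prime ideal theorem to $F_0$ and $I_0$ to obtain a prime filter $F$ directly, but then ensuring $F \cap I = \emptyset$ (rather than merely $F \cap I_0 = \emptyset$) would require an extra argument, making the complement approach cleaner.
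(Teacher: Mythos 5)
Your proof is correct. The first half coincides exactly with the paper's: form ${\downarrow}y$ and ${\uparrow}x$, note they are disjoint because $x\nleqslant y$, and apply part 1 of the prime ideal theorem to get a prime ideal $I$ containing $y$ and missing ${\uparrow}x$ (hence missing $x$). You diverge in how you produce $F$. The paper applies the prime ideal theorem a \emph{second} time, now to the disjoint pair consisting of the prime ideal $I$ and the filter ${\uparrow}x$, obtaining a prime filter $F\supseteq{\uparrow}x$ with $F\cap I=\emptyset$; this neatly sidesteps the worry you raise at the end, since the second application is made against $I$ itself rather than against ${\downarrow}y$. You instead take $F=A\setminus I$ and invoke the standard fact that the complement of a prime ideal is a prime filter (your one-line verification of this is right, and it does not even need distributivity). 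Your route is slightly more economical --- one invocation of the separation theorem instead of two --- and it produces the largest filter disjoint from $I$, which automatically contains ${\uparrow}x$ by upward closure. The only point to keep an eye on is properness: $F=A\setminus I$ is a nonempty proper filter precisely because $x\notin I$ and $y\in I$, which your construction already guarantees. Both arguments are valid; yours trades a second use of the existence theorem for a small structural fact about prime ideals.
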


\begin{proof}
Let $\downarrow y$ be the ideal $\{a\in A:a\leq y\}$ and $\uparrow x$ be the
filter $\{a\in A:x\leq a\}$. As $\downarrow y$ and $\uparrow x$ are disjoint
by the hypothesis $x\nleqslant y$, by the above theorem there is a prime
ideal $I$ such that $\downarrow y\subseteq I$ and $I\cap \uparrow
x=\emptyset $, so $y\in I$ and $x\notin I$. As $I$ and $\uparrow x$ are
disjoint, there is a prime filter $F$ such that $\uparrow x\subseteq F$ and $%
F\cap I=\emptyset $: so $x\in F$ and $y\notin F$.
\end{proof}

Now we show a similar theorem about separating points in DMF-algebras.
Whereas in distributive lattices a single ideal (filter) separates $x$ from $%
y$, in DMF-algebras points are set apart by a pair (ideal, filter). In the
following, we denote with $\lnot X$ the set $\{\lnot x:x\in X\}$. The
following lemma is left to the reader.

\begin{lemma}
\label{lemma38}
If $\mathcal{A}$ is a DM-algebra, then:

\begin{enumerate}
\item  $X$ is an ideal iff $\lnot X$ is a filter,

\item  $X$ is a filter iff $\lnot X$ is an ideal,

\item  $X$ is a prime ideal filter iff $\lnot X$ is a prime filter (ideal).
\end{enumerate}
\end{lemma}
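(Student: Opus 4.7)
The plan is to do part (1) directly from the De Morgan laws and the double negation law, then derive (2) from (1) by involution, and finally extend to (3) by checking the primeness condition translates correctly under negation.

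For (1), I would assume $X$ is an ideal and verify the three filter axioms on $\lnot X$. Nonemptiness is immediate. For upward closure, suppose $\lnot x \in \lnot X$ with $x \in X$ and $\lnot x \leq y$. Applying negation reverses the order, so $\lnot y \leq \lnot\lnot x = x$. Since $X$ is an ideal (hence downward closed), $\lnot y \in X$, which gives $y = \lnot\lnot y \in \lnot X$. For closure under meets, given $\lnot x,\lnot y \in \lnot X$ with $x,y\in X$, the De Morgan law gives $\lnot x \wedge \lnot y = \lnot(x\vee y)$, and $x\vee y\in X$ because ideals are closed under joins, so $\lnot x \wedge \lnot y \in \lnot X$. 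The converse direction uses the same argument applied to $\lnot X$, observing that $\lnot\lnot X = X$ by double negation.

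For (2), I would simply note the duality: $X$ is a filter iff (by (1) applied to $\lnot X$, using $\lnot \lnot X = X$) $\lnot(\lnot X) = X$ arises from negating a filter, i.e.\ $\lnot X$ is an ideal. Concretely, $X$ is a filter iff $\lnot X$ is an ideal, because replacing $X$ with $\lnot X$ in (1) and using involution converts the statement directly.

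For (3), I would verify the primeness condition translates under $\lnot$. Assuming $X$ is a prime ideal, $\lnot X$ is already a filter by (1), so I only need primeness: if $a \vee b \in \lnot X$, then $\lnot(a\vee b) = \lnot a \wedge \lnot b \in X$, and primeness of $X$ as an ideal gives $\lnot a \in X$ or $\lnot b \in X$, i.e.\ $a \in \lnot X$ or $b \in \lnot X$. The converse and the filter-to-ideal direction are symmetric, using the other De Morgan law.

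I do not expect any real obstacle; the whole lemma is a mechanical unwinding of the De Morgan laws together with $\lnot\lnot x = x$. The only mild point of care is to remember that negation reverses $\leq$, which is what turns downward closure of $X$ into upward closure of $\lnot X$ and vice versa, and that $\lnot\lnot X = X$ makes every ``iff'' in the statement automatic once one direction is proved.
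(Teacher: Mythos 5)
Your proof is correct and is exactly the mechanical argument the paper has in mind (the paper explicitly leaves this lemma to the reader): order reversal of $\lnot$ plus the De Morgan laws and $\lnot\lnot x=x$ convert each ideal axiom into the corresponding filter axiom and transport primeness. The only point worth adding for completeness is that properness is preserved because $\lnot$ is a bijection of $A$, so $\lnot X\neq A$ whenever $X\neq A$.
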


\begin{lemma}
\label{lemma39}
In every DMF-algebra, for all ideal $I$, $n\notin I$ iff $I\cap \lnot
I=\emptyset $.
\end{lemma}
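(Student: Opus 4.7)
The plan is to split the biconditional into two directions and use the two defining features of $n$ in a DMF-algebra: that $\lnot n = n$ (fixed point axiom) and that $n = n \wedge \lnot n \leq x \vee \lnot x$ for every $x$ (normality axiom plus fixed point).

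For the easy direction ($I \cap \lnot I = \emptyset \Rightarrow n \notin I$): I would argue contrapositively. If $n \in I$, then since $\lnot n = n$, the element $n$ also belongs to $\lnot I = \{\lnot x : x \in I\}$. Hence $n \in I \cap \lnot I$, contradicting disjointness.

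For the other direction ($n \notin I \Rightarrow I \cap \lnot I = \emptyset$): I would proceed by contradiction. Suppose some $a$ lies in $I \cap \lnot I$. Membership in $\lnot I$ means $a = \lnot b$ for some $b \in I$, and applying $\lnot$ and using double negation gives $\lnot a = b \in I$. Thus both $a$ and $\lnot a$ belong to the ideal $I$, so by closure of ideals under binary joins, $a \vee \lnot a \in I$. Now the normality axiom gives $n = n \wedge \lnot n \leq a \vee \lnot a$, and since ideals are downward closed, $n \in I$, contradicting the hypothesis.

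I do not expect any real obstacle here: both directions are short and rest on the two stipulated properties of $n$ (fixed point and normality) together with the elementary closure properties of ideals (closure under binary joins and downward closure). The only point worth flagging is the convention about $\lnot I$, where one should use $\lnot \lnot x = x$ to see that $a \in \lnot I$ is equivalent to $\lnot a \in I$; once this is in place the argument is immediate.
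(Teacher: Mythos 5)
Your proof is correct and follows essentially the same route as the paper's: the easy direction via $\lnot n = n$, and the converse by showing that an element of $I \cap \lnot I$ forces both some $i$ and $\lnot i$ into $I$, hence $i \vee \lnot i \in I$, hence $n \in I$ by normality and downward closure. No gaps.
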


\begin{proof}
In one direction, $n\in I$ implies $\lnot n\in \lnot I$, but $n=\lnot n$, so 
$I\cap \lnot I\neq \emptyset $. In the other direction, we suppose $x\in
I\cap \lnot I$. Then $x\in \lnot I$ implies $x=\lnot i$, for some $i\in I$,
and from $x\in I$ we get $\lnot i\in I$ and then $i\vee \lnot i\in I$. As in
DMF-algebras $n\leq i\vee \lnot i$, we have $n\in I$.
\end{proof}

\begin{theorem}
\label{tifdmf}If $\mathcal{A}$ is a DMF-algebra and $a\nleqslant b$, then
there is pair $(G,H)$ such that:

\begin{enumerate}
\item  $G$ is a prime ideal in $\mathcal{A}$ and $H$ is a prime filter in $%
\mathcal{A}$, with $G\cap H=\emptyset $,

\item  $H=\lnot G$,

\item  $a\notin G$ and $b\in G$ or $a\in H$ and $b\notin H$.
\end{enumerate}
\end{theorem}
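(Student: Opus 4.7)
The plan is to build the desired pair $(G,H)$ starting from the prime ideal/filter provided by Corollary \ref{coridpr}, and then exploit the fixed point $n$ together with Lemmas \ref{lemma38} and \ref{lemma39} to force the symmetry $H=\lnot G$ by a case split.

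First I would apply Corollary \ref{coridpr} to the bounded distributive lattice reduct of $\mathcal{A}$: since $a\nleqslant b$, there exist a prime ideal $I$ and a prime filter $F$ of $\mathcal{A}$ with $I\cap F=\emptyset$, $b\in I$, $a\notin I$, $a\in F$ and $b\notin F$. The problem is that $F$ need not equal $\lnot I$, so I must discard one of the two objects and replace it by the negation-image of the other. The pivotal observation is that since $I\cap F=\emptyset$, at most one of $I$, $F$ contains $n$; hence at least one of the alternatives "$n\notin I$" or "$n\notin F$" holds.

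In Case~1, assume $n\notin I$. Set $G:=I$ and $H:=\lnot I$. Then $G$ is a prime ideal, and by Lemma \ref{lemma38} $H$ is a prime filter; by Lemma \ref{lemma39}, $G\cap H = I\cap\lnot I=\emptyset$, and of course $H=\lnot G$. Conditions (1) and (2) are verified, and the first disjunct of (3) is given by $b\in I=G$ and $a\notin I=G$. In Case~2, assume $n\notin F$. Set $H:=F$ and $G:=\lnot F$. By Lemma \ref{lemma38}, $G$ is a prime ideal; moreover $n\notin\lnot F$ (otherwise $n=\lnot x$ for some $x\in F$, giving $x=\lnot n=n\in F$, a contradiction), so by Lemma \ref{lemma39} applied to the ideal $\lnot F$ we get $\lnot F\cap\lnot\lnot F=\emptyset$, i.e.\ $G\cap H=\emptyset$. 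Using double negation, $\lnot G=\lnot\lnot F=F=H$, and the second disjunct of (3) follows from $a\in F=H$ and $b\notin F=H$.

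The only step that requires genuine thought is the case distinction itself: without the DMF-hypothesis one cannot in general symmetrize a separating pair $(I,F)$, and it is precisely the fixed point $n$ (together with its position $\Delta\leq n\leq\nabla$) that makes Lemma \ref{lemma39} available and guarantees that whichever of $I,F$ avoids $n$ can be turned, via $\lnot$, into its complementary partner. All the remaining verifications are immediate applications of Lemmas \ref{lemma38}--\ref{lemma39} and of the involutive character of negation.
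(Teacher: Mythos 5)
Your proof is correct and follows essentially the same route as the paper's: separate $a$ from $b$ with Corollary \ref{coridpr}, observe that disjointness of $I$ and $F$ forces $n$ to miss at least one of them, and then symmetrize the one avoiding $n$ via $\lnot$ using Lemmas \ref{lemma38} and \ref{lemma39}. Your Case~2 verification that $n\notin\lnot F$ (and hence $\lnot F\cap F=\emptyset$) is in fact spelled out more carefully than in the paper, which leaves that disjointness implicit.
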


\begin{proof}
By corollary \ref{coridpr}, there are a prime ideal $I$ and a prime filter $F
$ such that: i) $I\cap F=\emptyset $; ii) $b\in I$ and $a\notin I$; iii) $%
a\in F$ and $b\notin F$. If $n\notin I$ then $I\cap \lnot I=\emptyset $, by
lemma \ref{lemma39}. By lemma \ref{lemma38}, $\lnot I$ is a prime filter, so
$(I,\lnot I)$ is the pair $(G,H)$ we are looking for. If $n\in I$ then 
$n\notin F$, because $I\cap F=\emptyset $. As $\lnot F$ is a prime ideal, by
lemma \ref{lemma38}, $(\lnot F,F)$ is the pair $(G,H)$ we are looking for.
\end{proof}

We need a last result concerning the relations between prime filters and
epimorphisms on $\mathcal{K}$ in DMF-algebras. We know that there is a tight
connection between prime filters in bounded lattices and epimorphisms $%
\varphi :\mathcal{A}\rightarrow \mathbf{2}$, where $\mathbf{2}$ \ is the
two-element lattice. On one side, if $\varphi $ is such an epimorphism, then 
$I_{\varphi }=\varphi ^{-1}\{0\}$ is a prime ideal of $\mathcal{A}$. On the
other side, for every prime ideal $I$ of $\mathcal{A}$, the function $%
\varphi _{I}:$ $\mathcal{A}\rightarrow 2$ defined by

\begin{center}
$\varphi _{I}(a)=\left\{ 
\begin{array}{ccc}
0 & \mathrm{if} & a\in I \\ 
1 & \mathrm{if} & a\notin I
\end{array}
\right. $
\end{center}

\noindent is an epimorphism. (See \cite[ex. 9.2]{davey1990}.) If $%
\mathcal{A}$ is a Boolean algebra then $\varphi _{I}$ is a Boolean morphism.
A dual theorem holds for filters. Now we suppose that $\mathcal{A}$ be a
DMF-algebra and $I$ a prime ideal of $\mathcal{A}$ such that $n\notin I$.
By theorem \ref{lemma39}, $I$ and $\lnot I$ are disjoint, so we can define a function
$\varphi _{I}:A\rightarrow K$ setting

\begin{equation*}
\varphi _{I}(a)=\left\{ 
\begin{array}{ccc}
0 & \mathrm{if} & a\in I, \\ 
1 & \mathrm{if} & a\in \lnot I, \\ 
n & \mathrm{if} & a\in A-(I\cup \lnot I).
\end{array}
\right.
\end{equation*}

\begin{theorem}
\label{tepi3} If $\mathcal{A}$ is a DMF-algebra and $I$ a prime ideal of $%
\mathcal{A}$ such that $n\notin I$, then $\varphi _{I}:A\rightarrow K$ is an
epimorphism of DMF-algebras.
\end{theorem}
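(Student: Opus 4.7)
The plan is to verify separately that $\varphi_I$ is well-defined, that it preserves each operation and constant of the DMF-signature, and that it is surjective onto $\mathcal{K}$.

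Well-definedness is essentially free: since $n \notin I$, lemma \ref{lemma39} gives $I \cap \lnot I = \emptyset$, so the three cases in the definition of $\varphi_I$ are mutually exclusive, and they are exhaustive by construction. Surjectivity is equally quick once one verifies the constants. For the constants: $0 \in I$ gives $\varphi_I(0) = 0$; $1 \in \lnot I$ (because $\lnot 1 = 0 \in I$) gives $\varphi_I(1) = 1$; and $n \notin I$ by hypothesis while $n = \lnot n \notin \lnot I$, so $\varphi_I(n) = n$. This simultaneously shows that $0, 1, n$ all lie in the image of $\varphi_I$, so surjectivity holds.

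Preservation of $\lnot$ is a direct chase through the three cases. If $a \in I$ then $\lnot a \in \lnot I$, so $\varphi_I(\lnot a) = 1 = \lnot 0 = \lnot \varphi_I(a)$; the case $a \in \lnot I$ is symmetric using $\lnot \lnot a = a$; and if $a \notin I \cup \lnot I$ then neither is $\lnot a$ (using $\lnot \lnot I = I$), so both sides equal $n$.

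The main work, and where case analysis is most delicate, is preservation of $\wedge$ and $\vee$. The plan is to exploit that $I$ is a prime ideal (downward closed, closed under $\vee$, with $a \wedge b \in I \Rightarrow a \in I$ or $b \in I$) and, dually by lemma \ref{lemma38}, that $\lnot I$ is a prime filter. For $\wedge$: if either $a$ or $b$ lies in $I$ then $a \wedge b \in I$ by the ideal property, so both sides are $0$; if both $a, b \in \lnot I$ then $a \wedge b \in \lnot I$ by the filter property, so both sides are $1$; in all remaining configurations at least one of $\varphi_I(a), \varphi_I(b)$ equals $n$ and the other is $\geq n$, and we need $a \wedge b \notin I \cup \lnot I$. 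The membership $a \wedge b \notin I$ follows from primeness of $I$ (neither $a$ nor $b$ is in $I$ in these cases), and $a \wedge b \notin \lnot I$ follows because $\lnot I$ is a filter closed downward under the derived relation (i.e., $a \wedge b \in \lnot I$ would force both $a$ and $b$ into $\lnot I$), contradicting our case assumption. The argument for $\vee$ is completely dual, using that $\lnot I$ is a prime filter and $I$ is an ideal, and translating a putative $a \vee b \in \lnot I$ via $\lnot(a \vee b) = \lnot a \wedge \lnot b \in I$ to a primeness contradiction.

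The only real subtlety is making sure the "mixed" cases where one argument is in $A - (I \cup \lnot I)$ are handled correctly: these are where one has to invoke primeness of the ideal (or, dually, of the filter) rather than simply the lattice-theoretic closure properties, and it is crucial that $\lnot I$ is a \emph{prime} filter and not merely a filter. Once the $\wedge$-case is laid out, the $\vee$-case is essentially mechanical by de Morgan duality, so the bulk of the writing is the tabular verification above.
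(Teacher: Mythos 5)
Your proposal is correct and follows essentially the same route as the paper: verify the constants, then preservation of $\lnot$, $\wedge$ and $\vee$ by case analysis, using lemma \ref{lemma39} for disjointness of $I$ and $\lnot I$, primeness of $I$, and the fact (lemma \ref{lemma38}) that $\lnot I$ is a prime filter. The only difference is cosmetic: the paper organizes the $\wedge$-cases by the value of $\varphi_I(x\wedge y)$ while you organize them by where $a$ and $b$ lie, and you make surjectivity and well-definedness explicit where the paper leaves them implicit.
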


\begin{proof}
Obviously $\varphi _{I}(0)=0$, $\varphi _{I}(1)=1$ and $\varphi _{I}(n)=n$,
because $n\notin I$ and $n\notin \lnot I$.

We show that $\varphi _{I}$ preserves $\wedge $. If $\varphi _{I}(x\wedge
y)=0$ then $x\wedge y\in I$ and $x\in I$ or $y\in I$, because $I$ is prime.
Thus $\varphi _{I}(x)=0$ or $\varphi _{I}(y)=0$ and so $\varphi
_{I}(x)\wedge \varphi _{I}(y)=0$. If $\varphi _{I}(x\wedge y)=1$ then $%
x\wedge y\in \lnot I$ and both $x$ and $y$ belong to $\lnot I$, because $%
\lnot I$ is a filter. Then $\varphi _{I}(x)=\varphi _{I}(y)=1$ and $\varphi
_{I}(x)\wedge \varphi _{I}(y)=1$. Finally we suppose $\varphi _{I}(x\wedge
y)=n$, then $x\wedge y\notin I$ and $x\wedge y\notin \lnot I$. As $I$ is an
ideal, we have $x\notin I$ and $y\notin I$, otherwise we could derive $%
x\wedge y\in I$. As $\lnot I$ is a filter, we have $x\notin \lnot I$ or $%
y\notin \lnot I$, otherwise we could derive $x\wedge y\in \lnot I$. We can
distinguish the following three cases. Case 1, both $x$ and $y$ are in $%
A-(I\cup \lnot I)$. Then $\varphi _{I}(x)=\varphi _{I}(y)=n$ and so $\varphi
_{I}(x)\wedge \varphi _{I}(y)=n$. Case 2, $x$ is in $A-(I\cup \lnot I)$ and $%
y$ in $\lnot I$. Then $\varphi _{I}(x)=n$, $\varphi _{I}(y)=1$ and so $%
\varphi _{I}(x)\wedge \varphi _{I}(y)=n$. Case 3, $y$ is in $A-(I\cup \lnot
I)$ and $x$ in $\lnot I$. Then $\varphi _{I}(x)=1$, $\varphi _{I}(y)=n$ and
so $\varphi _{I}(x)\wedge \varphi _{I}(y)=n$.

An analogous proof shows that $\vee $ is preserved.

Finally we show that $\varphi _{I}$ preserves $\lnot $. If $\varphi
_{I}(\lnot x)=0$ then $\lnot x\in I$ and so $x\in \lnot I$ and $\varphi
_{I}(x)=1$, i.e. $\varphi _{I}(\lnot x)=\lnot \varphi _{I}(x)$. If $\varphi
_{I}(\lnot x)=1$ then $\lnot x\in -I$ and so $x\in I$ e $\varphi _{I}(x)=0$,
i.e. $\varphi _{I}(\lnot x)=\lnot \varphi _{I}(x)$. If $\varphi _{I}(\lnot
x)=n$ then $\lnot x\notin I$ and $\lnot x\notin \lnot I$, thus $x\notin I$
and $x\notin \lnot I$. Then $\varphi _{I}(x)=n$. As $n=\lnot n$, we have $%
\varphi _{I}(\lnot x)=\lnot \varphi _{I}(x)$.
\end{proof}

\begin{theorem}
\label{teolindfree}The Lindenbaum algebra $\mathcal{F}_{n}^{\ast }/\sim $ is
free in the class of DMF-algebras, with $G=\{|p_{i}|:i\in n\}$ as a set of
free generators.
\end{theorem}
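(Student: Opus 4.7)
The plan is to verify the universal mapping property: given any DMF-algebra $\mathcal{A}$ and any set-function $f\colon G\to A$, I want to produce a unique DMF-morphism $\bar f\colon \mathcal{F}_n^*/\!\sim\, \to \mathcal{A}$ with $\bar f(|p_i|)=f(|p_i|)$. Since $\mathcal{F}_n^*$ is absolutely free on $P_n$, the assignment $g(p_i)=f(|p_i|)$ extends uniquely to a DMF-morphism $\hat f\colon \mathcal{F}_n^*\to\mathcal{A}$. The natural candidate is $\bar f(|\alpha|)=\hat f(\alpha)$, and uniqueness of $\bar f$ will then be automatic because $G$ generates $\mathcal{F}_n^*/\!\sim$.

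The main obstacle is well-definedness: I must show that $\alpha\sim\beta$ (equivalently $M(\alpha)=M(\beta)$) forces $\hat f(\alpha)=\hat f(\beta)$ inside an arbitrary DMF-algebra $\mathcal{A}$. I will argue contrapositively, using Theorems \ref{tifdmf} and \ref{tepi3} as the separation tools. Suppose $\hat f(\alpha)\neq\hat f(\beta)$; then either $\hat f(\alpha)\not\leq\hat f(\beta)$ or the reverse holds, so Theorem \ref{tifdmf} supplies a prime ideal $I$ of $\mathcal{A}$ with $n\notin I$ (as guaranteed by $I\cap \lnot I=\emptyset$ and lemma \ref{lemma39}) that separates the two elements. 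Theorem \ref{tepi3} then turns $I$ into a DMF-epimorphism $\varphi_I\colon\mathcal{A}\to\mathcal{K}$, and a short case analysis on the three possible values in $K$ shows $\varphi_I(\hat f(\alpha))\neq\varphi_I(\hat f(\beta))$.

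Composing, I obtain a DMF-morphism $\varphi_I\circ\hat f\colon\mathcal{F}_n^*\to\mathcal{K}$. Setting $s_i=\varphi_I(\hat f(p_i))\in K$ for $i<n$, I will identify $\varphi_I\circ\hat f$ with the truth-value morphism $V_s$ of par.~\ref{parprobparsent}: both are DMF-morphisms from the absolutely free algebra $\mathcal{F}_n^*$ into $\mathcal{K}$ agreeing on the generators $p_i$, so they coincide. Thus $V_s(\alpha)\neq V_s(\beta)$, and by the equivalence between the $M$-semantics and the $V_s$-semantics recorded earlier, $M(\alpha)\neq M(\beta)$, contradicting $\alpha\sim\beta$. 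The delicate point, and where I expect the work to concentrate, is this transfer through the $(I,\varphi_I)$-machinery, since it is the only place where Kleene's three-valued semantics has to be reconstructed inside a generic DMF-algebra. Once the descent is established, $\bar f$ inherits the structure-preservation of $\hat f$ and the proof is complete.
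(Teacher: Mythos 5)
Your proposal is correct and follows essentially the same route as the paper: both reduce well-definedness of $\bar f$ on $\sim$-classes to the separation machinery of Theorems \ref{tifdmf} and \ref{tepi3}, producing an epimorphism $\varphi_I:\mathcal{A}\rightarrow\mathcal{K}$ whose composite with $\hat f$ is identified (via absolute freeness of $\mathcal{F}_{n}^{\ast}$) with a truth-value morphism $V_s$, contradicting $M(\alpha)=M(\beta)$. The only cosmetic difference is that the paper first establishes the order-preserving form ($\alpha\models\beta$ implies $\hat f(\alpha)\leq\hat f(\beta)$) and deduces well-definedness from antisymmetry, whereas you argue the equality case directly; the content is the same.
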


\begin{proof}
As the algebra of formulas $\mathcal{F}_{n}^{\ast }$ is generated by $%
P=\{p_{i}:i\in n\}$, $\mathcal{F}_{n}^{\ast }/\sim $ is generated by \ $%
G=\{|p_{i}|:i\in n\}$. We must show that $G$ is a set of free generators,
i.e. every function $g:G\rightarrow \mathcal{A}$, where $\mathcal{A}\in DMF$%
, can be extended to a unique morphism $\overline{g}:\mathcal{F}_{n}^{\ast
}/\sim \rightarrow \mathcal{A}$. Firstly, we define a function $%
f:P\rightarrow A$ setting $f(p_{i})=g(|p_{i}|)$. Then $f$ can be extended to
a unique morphism $\overline{f}:\mathcal{F}\rightarrow \mathcal{A}$ because $%
\mathcal{F}_{n}^{\ast }$ is the absolutely free algebra. Now we define $%
\overline{g}$ setting $\overline{g}(|\alpha |)=\overline{f}(\alpha )$. We
must show that the value of $\overline{g}$ is independent from the
representative of the equivalence class, i.e. $\alpha \sim \beta $ implies $%
\overline{f}(\alpha )=\overline{f}(\beta )$. This follows immediately, if
we can prove that
$\alpha \models \beta $ $\rightarrow \overline{f}(\alpha )\leq \overline{f}%
(\beta )$. In fact, if $\alpha \sim \beta $ then $\alpha \models \beta $ and 
$\beta \models \alpha $, so $\overline{f}(\alpha )=\overline{f}(\beta )$.

To prove that $\alpha \models \beta $ \ implies $\overline{f}(\alpha )\leq 
\overline{f}(\beta )$, we assume $\alpha \models \beta $ and suppose toward a
contradiction that $\overline{f}(\alpha )\nleq \overline{f}(\beta )$. Then,
by theorem \ref{tifdmf}, there is a pair $(I,F)$ in $\mathcal{A}$ such that: 
$I$ is a prime ideal, $F$ is a prime filter, $I\cap F=\emptyset $, $F=\lnot I
$ and finally $(\overline{f}(\alpha )\notin I$ and $\overline{f}(\beta )\in
I)$ or $(\overline{f}(\alpha )\in F$ and $\overline{f}(\beta )\notin F)$. We
observe that we must have $n\notin I$, otherwise, from $n\in I$ we could get 
$\lnot n\in \lnot I=F$ and $n\in I\cap F$, because $n=\lnot n$,
contradicting $I\cap F=\emptyset $. By theorem \ref{tepi3}, there is
a morphism $\varphi _{I}$:$\mathcal{A}\rightarrow K$ such that 
\begin{equation*}
\varphi _{I}(a)=\left\{ 
\begin{array}{ccc}
0 & \mathrm{if} & a\in I, \\ 
1 & \mathrm{if} & a\in \lnot I, \\ 
n & \mathrm{if} & a\in A-(I\cup \lnot I).
\end{array}
\right. 
\end{equation*}
Then $V=\varphi _{I}\circ \overline{f}$ is a morphism from $\mathcal{F}$ to $%
K$ such that: $(V(\beta )=0$ and $V(\alpha )\in \{0,1\})$ or $(V(\beta )\in
\{0,n\}$ and $V(\alpha )=1)$: in both cases we have $\alpha \nvDash \beta $,
what is absurd.

We must verify that $\overline{g}$ is a morphism extending $g$. We content
ourselves to verify that $\overline{g}$ preserves $\wedge $, the cases of
the other operations being analogous: 
\begin{equation*}
\overline{g}(|\alpha |\wedge |\beta |)=\overline{g}(|\alpha \wedge \beta |)
\\
=\overline{f}(\alpha \wedge \beta ) 
=\overline{f}(\alpha )\wedge \overline{f}(\beta ) 
=\overline{g}(\alpha )\wedge \overline{g}(\beta ).
\end{equation*}
Finally, $\overline{g}$ is an extension of $g$ because $\overline{g}%
(|p_{i}|)=\overline{f}(p_{i})=f(p_{i})=g(|p_{i}|)$.

The uniqueness of $\overline{g}$ follows as usual from the following fact of
general character: if $\mathcal{A}$ is generated by $G$ and both $f$ and $%
f^{\prime }$ are morphism from $\mathcal{A}$ to $\mathcal{B}$ coinciding on $%
G$, then $f=f^{\prime }$.
\end{proof}

Now we can give the translation from probability as a valuation on a field
of partial sets into probability as a partial probability function on
formulas. We suppose that a partial probability space $(A,\mathcal{G}%
_{A},\mu )$ be given and denote with $G_{A}$ the domain of $\mathcal{G}_{A}$%
. As the relation events/formulas is one/many, what we want is a
partial probability function $\pi $ and a function $\bar{\tau}$ from $%
\mathcal{G}_{A}$ to $P(F_{n}^{\ast })$ such that, for all $\alpha \in \bar{%
\varphi}(X,Y)$, $\mu (X,Y)=\pi (\alpha )$. The first step toward the
translation is the following theorem about valuations on Lindenbaum algebras.

\begin{theorem}
If $\overline{v}$ is a partial valuation on $\mathcal{F}_{n}^{\ast }/\sim $ then the
function $\pi :F_{n}^{\ast }\rightarrow T$ defined by $\pi (\alpha )=\overline{v}
(|\alpha |)$ is a partial probability function on $L_{n}$.
\end{theorem}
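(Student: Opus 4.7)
The plan is to verify the four axioms defining a partial probability function by translating each one directly into the corresponding axiom of a partial valuation on the Lindenbaum algebra $\mathcal{F}_{n}^{\ast}/\!\sim$. The essential dictionary is: $\mathcal{F}_{n}^{\ast}/\!\sim$ is a DMF-algebra (being a subalgebra of $\mathcal{D}(K^{n})$ via $M$), the equivalence class map commutes with all connectives ($|\alpha\wedge\beta|=|\alpha|\wedge|\beta|$, $|\alpha\vee\beta|=|\alpha|\vee|\beta|$, $|\lnot\alpha|=\lnot|\alpha|$), the constants satisfy $|1|=1$ and $|n|=n$ (the fixed point of negation in the Lindenbaum algebra), and the Kleene consequence relation satisfies $\alpha\models\beta$ iff $|\alpha|\leq|\beta|$ in $\mathcal{F}_{n}^{\ast}/\!\sim$.

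With this dictionary in hand, each axiom reduces to a one-line calculation. For Axiom 1, $1\models\alpha$ gives $|1|\leq|\alpha|$, forcing $|\alpha|=1$, so $\pi(\alpha)=\overline{v}(1)=(1,0)$ by point 1 of theorem \ref{teopropr_v}. For Axiom 2, I would chain
\begin{equation*}
\pi(\alpha\vee\beta)=\overline{v}(|\alpha\vee\beta|)=\overline{v}(|\alpha|\vee|\beta|)=\overline{v}(|\alpha|)+\overline{v}(|\beta|)-\overline{v}(|\alpha|\wedge|\beta|)=\pi(\alpha)+\pi(\beta)-\pi(\alpha\wedge\beta),
\end{equation*}
using Axiom 2 of partial valuation in the middle step. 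Axiom 3 is analogous: $\pi(\lnot\alpha)=\overline{v}(\lnot|\alpha|)=\sigma(\overline{v}(|\alpha|))=\sigma(\pi(\alpha))$, invoking Axiom 3 of partial valuation. For Axiom 4, $n\models\alpha$ gives $n=|n|\leq|\alpha|$ in the Lindenbaum algebra, hence $(0,0)\preccurlyeq\overline{v}(|\alpha|)=\pi(\alpha)$ by Axiom 4 of partial valuation.

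The only point requiring any care at all is the justification that $\pi$ is well defined on $F_{n}^{\ast}$, i.e.\ that the value $\overline{v}(|\alpha|)$ does not depend on the representative. This is automatic, since $\pi(\alpha)$ is defined through the equivalence class $|\alpha|$ itself, not through $\alpha$; there is nothing to check here, in contrast with the translation direction treated in section \ref{partradpar1} where isotonicity of $\pi$ was needed. Thus no real obstacle arises and the proof consists of four short verifications using theorem \ref{teopropr_v} and the axioms of partial valuation.
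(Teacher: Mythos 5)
Your proof is correct and follows essentially the same route as the paper's: verify each of the four axioms by passing through the quotient map and invoking the corresponding axiom of partial valuation (plus $\overline{v}(1)=(1,0)$ from theorem \ref{teopropr_v} for axiom 1, where your citation is in fact slightly more precise than the paper's). Your closing remark that well-definedness is automatic in this direction is also accurate and correctly contrasts with the converse translation of section \ref{partradpar1}.
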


\begin{proof}
We show that the four axioms of partial probability functions are satisfied
by $\pi $.

1. If $1\models \alpha $ then $|\alpha |=1$ in $\mathcal{F}_{n}^{\ast }/\sim 
$ and then $\overline{v} (|\alpha |)=(1,0)$, by axiom 1 in the definition of partial
valuation of par. \ref{parvalpar}.

2. 
\begin{eqnarray*}
\pi (\alpha \vee \beta )&=&\overline{v} (|\alpha \vee \beta |) 
=\overline{v} (|\alpha |\vee |\beta |) 
=\overline{v} (|\alpha |)+\overline{v} (|\beta |)-\overline{v} (|\alpha |\wedge |\beta |) \\
&=&\overline{v} (|\alpha |)+\overline{v} (|\beta |)-\overline{v} (|\alpha \wedge \beta |)
=\pi (\alpha )+\pi (\beta )-\pi (\alpha \wedge \beta ).
\end{eqnarray*}

3. $\pi (\lnot \alpha )=\overline{v} (|\lnot \alpha |)=\overline{v} (\lnot |\alpha |)=\sigma
(\overline{v} (\left| \alpha \right| ))=\sigma (\pi (\alpha ))$, by axiom 3 in the
definition of partial valuation.

4. If $n\models \alpha $ then $|n|\leq |\alpha |$ in $\mathcal{F}$ $%
_{n}/\sim $ $\ $and by axiom 4 in the definition of partial valuation, $%
(0,0)\preccurlyeq \overline{v} (|\alpha |)$ so $(0,0)\preccurlyeq \pi (\alpha )$
\end{proof}

\begin{theorem}
Let $(A,\mathcal{G}_{A},\mu )$ be a partial probability space, with $%
A=\{a_{1},...,a_{n}\}$. Then there are a function $\bar{\tau}%
:G_{A}\rightarrow P(F)$ and a partial probability function $\pi $ on $L_{j}$
such that, for all partial event $(X,Y)$ in $G_{A}$ and all formula $\alpha $
in $\bar{\tau}(X,Y)$, $\mu (X,Y)=\pi (\alpha )$.
\end{theorem}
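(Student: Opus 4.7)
The plan is to mirror the set-to-sentences Boolean translation given earlier, but in place of constructing an explicit epimorphism between finite powers of $\mathbf{2}$, I would exploit the freeness of the Lindenbaum algebra in the class of DMF-algebras established in theorem \ref{teolindfree}. Since $A$ is finite, $\mathcal{G}_A$ is a finite DMF-algebra and so admits a finite generating set; let $j$ be the cardinality of such a set (the simplest choice being $j = |G_A|$, with $g_1,\dots,g_j$ an enumeration of $G_A$ itself).

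First, by theorem \ref{teolindfree}, $\mathcal{F}_j^{\ast}/\!\sim$ is free in $DMF$ on the generators $|p_1|,\dots,|p_j|$, so the assignment $|p_i| \mapsto g_i$ extends to a unique DMF-morphism $\eta: \mathcal{F}_j^{\ast}/\!\sim \;\to\; \mathcal{G}_A$; because the $g_i$ generate $\mathcal{G}_A$, $\eta$ is surjective. Next, by theorem \ref{teo333}, the composition $v = \mu \circ \eta$ is a partial valuation on $\mathcal{F}_j^{\ast}/\!\sim$. Then, by the theorem immediately preceding the one to be proved, the function $\pi: F_j^{\ast} \to T$ given by $\pi(\alpha) = v(|\alpha|)$ is a partial probability function on $L_j^{\ast}$.

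Finally, define $\bar{\tau}: G_A \to P(F_j^{\ast})$ by
\[
\bar{\tau}(X,Y) \;=\; \bigcup\{\,|\alpha| \in \mathcal{F}_j^{\ast}/\!\sim \;:\; \eta(|\alpha|) = (X,Y)\,\}.
\]
Since $\eta$ is onto, $\bar{\tau}(X,Y)$ is nonempty for every $(X,Y) \in G_A$, and for any $\alpha \in \bar{\tau}(X,Y)$ the chain of equalities
\[
\pi(\alpha) \;=\; v(|\alpha|) \;=\; \mu(\eta(|\alpha|)) \;=\; \mu(X,Y)
\]
gives exactly what is required.

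The main obstacle — and the reason a direct copy of the Boolean proof is not automatic — is the construction of the surjective morphism $\eta$. In the Boolean setting this demanded a cardinal calculation together with the auxiliary lemma on epimorphisms between powers of $\mathbf{2}$, whereas in the DMF setting it is cleanly absorbed by the universal property of theorem \ref{teolindfree}; the only extra observation needed is that a finite DMF-algebra has a finite generating set, which is immediate. Once $\eta$ is in place, the verification of the partial valuation axioms for $v$ and of the partial probability axioms for $\pi$ is delegated to theorems already established, so the rest is bookkeeping.
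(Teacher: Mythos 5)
Your proposal is correct and follows essentially the same route as the paper: use the freeness of $\mathcal{F}_j^{\ast}/\!\sim$ (theorem \ref{teolindfree}) to build a surjective morphism $\eta$ onto $\mathcal{G}_A$ from a generating set, pull back $\mu$ via theorem \ref{teo333}, convert the resulting partial valuation to a partial probability function by the preceding theorem, and define $\bar{\tau}$ as the union of the fibres of $\eta$. The only (immaterial) difference is that the paper takes $j$ to be the minimal cardinality of a generating set of $\mathcal{G}_A$, whereas you allow any finite generating set such as $G_A$ itself; since the theorem only asserts existence of $\pi$ on some $L_j$, this does not affect correctness.
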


\begin{proof}
Let $j=\min \{|H|:H\subseteq G_{A}$, $H$ generates $\mathcal{G}_{A}\}$. We
denote with $H_{j}$ a set of generators of $\mathcal{G}_{A}$ of cardinal $j$%
. As $\mathcal{G}_{A}\subseteq \mathcal{D}(A)$ and $|\mathcal{D}(A)|=K^{n}$,
we have $j\leq K^{n}$. By theorem \ref{teolindfree}, $\mathcal{F}$ $%
_{j}/\sim $ is free on the class of DMF-algebras with $\{|p_{i}|:i\in j\}$
as a set of free generators, so every function $f:\{|p_{i}|:i<j\}\rightarrow 
\mathcal{G}_{A}$ can be extended to a morphism $\eta :\mathcal{F}$ $%
_{j}/\sim \rightarrow \mathcal{G}_{A}$. By theorem \ref{teo333} we obtain a
partial valuation $\overline{v} $ on $\mathcal{F}$ $_{j}/\sim $, setting 
$\overline{v} =\mu
\circ \eta $. By the preceding theorem, we can define a partial probability
function $\pi $ on $L_{j}$ setting $\pi (\alpha )=\overline{v} (|\alpha |)$.

As the cardinalities of $\{|p_{i}|:i<j\}$ and $H_{j}$ are identical, we can
choose a function $f$ that is a bijection between the generators of $%
\mathcal{F}$ $_{j}/\sim $ and the generators of $\mathcal{G}_{A}$. In this
way, the induced morphism $\eta $ will be an epimorphism. This enables the
following definition of $\bar{\tau}:G_{A}\rightarrow P(F)$: 
\begin{equation*}
\bar{\tau}(X,Y)=\bigcup \{|\xi |\in F_{j}/\sim :\eta (|\xi |)=(X,Y)\}.
\end{equation*}
$\pi $ and $\bar{\tau}$ satisfy the conditions posed by the theorem: for
all partial event $(X,Y)$ in $G_{A}$ and all formula $\alpha $ in 
$\bar{\tau}(X,Y)$, 
\begin{equation*}
\pi (\alpha )=\overline{v} (|\alpha |)=\mu (\eta (|\alpha |))=\mu (X,Y).
\end{equation*}
\end{proof}

\end{document}